
\documentclass[a4paper,11pt]{amsart}

\usepackage{amssymb,amsthm}  
\usepackage{mathtools}      
\usepackage{mathabx}        
\usepackage[bb=fourier,cal=euler,scr=rsfs]{mathalfa}	
\usepackage{enumitem}       
\usepackage[ansinew]{inputenc}
\usepackage{color}
\usepackage{tikz}
\usepackage{upref}
\usepackage{graphicx}

\setcounter{tocdepth}{2}    
\setcounter{secnumdepth}{3}

\numberwithin{equation}{section}     

\setlist[enumerate,1]{label={\upshape(\roman*)},ref=\roman*}
\setlist[enumerate,2]{label={\upshape(\alph*)},ref=\alph*}


   \def\DD{{\mathbb D}}

 \def\NN{{\mathbb N}}  
 \def\RR{{\mathbb R}}  \def\TT{{\mathbb T}}
   
 \def\ZZ{{\mathbb Z}}

\def\cA{\mathcal{A}}  \def\cG{\mathcal{G}}  \def\cS{\mathcal{S}}
   \def\cN{\mathcal{N}} \def\cT{\mathcal{T}}
\def\cC{\mathcal{C}}   \def\cO{\mathcal{O}} \def\cU{\mathcal{U}}
\def\cD{\mathcal{D}}    \def\cV{\mathcal{V}}
\def\cE{\mathcal{E}}    \def\cW{\mathcal{W}}


\newtheorem*{teo*}{Theorem}

\newtheorem*{teoA}{Theorem A}
\newtheorem*{teoB}{Theorem B}

\newtheorem*{teoC}{Theorem C}
\newtheorem{teo}{Theorem}[section]

\newtheorem*{conj*}{Conjecture}

\newtheorem{cor}[teo]{Corollary}
\newtheorem*{corD}{Corollary D}

\newtheorem{lema}[teo]{Lemma}
\newtheorem{prop}[teo]{Proposition}

\newcommand{\bi}{\begin{itemize}}
\newcommand{\ei}{\end{itemize}}

\theoremstyle{definition}

\theoremstyle{remark}
\newtheorem{obs}[teo]{Remark}



\newcommand{\eps}{\varepsilon}
\DeclareMathOperator{\interior}{int}

\newcommand{\en}{\subset}

\DeclareMathOperator{\Diff}{Diff}
\DeclareMathOperator{\PH}{PH}

\DeclareMathOperator{\Image}{Image}
\newcommand{\diff}[1]{\Diff^{#1}(M)}


\author[S. Crovisier]{Sylvain Crovisier}

\author[R. Potrie]{Rafael Potrie}

\author[M. Sambarino]{Mart\'in Sambarino}

\title[Finiteness of partially hyperbolic attractors]{Finiteness of partially hyperbolic attractors with one-dimensional center}

\date{\today}

\thanks{The authors were partially supported by the Balzan Research Project of J. Palis.
R.P. and M.S were partially supported by CSIC group 618, IFUM, CNRS and MathAmSud:Physeco.
R.P. was also partially supported by the Laboratoire Mathematique d'Orsay. S. C. was partially supported by IFUM and the ERC project 692925 \emph{NUHGD}. }

\begin{document}

\maketitle

\begin{abstract}
We prove that the set of diffeomorphisms having at most finitely many attractors
contains a dense and open subset of the space of $C^1$ partially hyperbolic diffeomorphisms with one-di\-men\-sio\-nal center.

This is obtained thanks to a robust geometric property of the stable and unstable laminations that we show to hold after perturbations of the dynamics.
This technique also allows to prove that $C^1$-generic diffeomorphisms far from homoclinic tangencies in dimension $3$
either have at most finitely many attractors, or satisfy Newhouse phenomenon.
\bigskip

\noindent
{\bf Finitude des attracteurs partiellement hyperboliques avec central de dimension un.}

\noindent
{\sc R\'esum\'e.}
Nous montrons que l'ensemble des diff\'eomorphismes ayant un nombre au plus fini d'attracteurs contient un ouvert dense de l'espace
des diff\'eomorphismes $C^1$ partiellement hyperboliques avec fibr\'e central de dimension $1$.

Ce r\'esultat d\'ecoule d'une propri\'et\'e g\'eom\'etrique robuste des laminations stables et instables,
qui peut \^etre obtenue par perturbation de la dynamique.
Cette technique nous permet \'egalement de montrer que sur les vari\'et\'es de dimension $3$,
les diff\'eomorphismes $C^1$-g\'en\'eriques loin des tangences homoclines ou bien ont un nombre au plus fini
d'attracteurs, ou bien pr\'esentent le ph\'enom\`ene de Newhouse.
\bigskip

\noindent
{\bf Keywords:} Differentiable dynamics, partial hyperbolicity, attractors.

\medskip

\noindent {\bf MSC 2010:} 37C70, 37C20, 37D30.   
\end{abstract}

\section{Introduction}\label{SectionIntroduction}

A main question when one studies the qualitative properties of a dynamical system consists in describing its attractors.
More generally, one studies how the dynamics decomposes into elementary invariant pieces. This is for instance the purpose
of Smale's spectral decomposition theorem for hyperbolic dynamics.
This paper discusses the number of attractors for diffeomorphisms $f$ of a compact boundaryless manifold
under a weaker hyperbolicity property.

One usually defines an \emph{attractor} of $f$ as an $f$-invariant non-empty compact set $K$ which admits a neighborhood $U$ satisfying $K=\bigcap_{n\in \NN} f^n(U)$ and which is transitive
(i.e. the dynamics of $f$ on $K$ contains a dense forward orbit).
An attractor which is reduced to a finite set is called a \emph{sink}.
In general a diffeomorphism may have no attractors (this is for instance the case of the identity)
and one introduces a weaker notion: a \emph{quasi-attractor} of $f$ is a $f$-invariant non-empty compact set which has the following two properties:
\begin{itemize}
\item $K$ admits a basis of open neighborhoods $U$ such that $f(\overline U)\subset U$,
\item $K$ is chain-transitive, i.e. for any $\varepsilon>0$ there exists a dense sequence $(x_n)_{n\geq 0}$ in $K$
which satisfies $d(f(x_n),x_{n+1})<\varepsilon$ for each $n\geq 0$.
\end{itemize}
Any homeomorphism of a compact metric space admits at least one quasi-attractor.
For hyperbolic diffeomorphisms they coincide with usual attractors.
For any diffeomorphisms in a dense G$_{\delta}$-set of $\Diff^1(M)$,
the set of points whose positive orbit accumulate on a quasi-attractor is
a dense G$_\delta$-subset of $M$, see~\cite{BC-rec}.

The number of attractors may be infinite for large classes of dynamical systems. This is the case near the set $\cT$
of diffeomorphisms exhibiting a homoclinic tangency, i.e. which have a hyperbolic periodic orbit whose stable and unstable manifolds
are not transverse: this has been proved by Newhouse~\cite{newhouse1}
inside the space $\Diff^2(M)$ of $C^2$ diffeomorphisms of a surface $M$,
or in $\Diff^1(M)$ when $\dim(M)\geq 3$, under a stronger assumption on the homoclinic
tangency, see for instance~\cite{BD,BDV,Bonatti-Survey,Crov-habilitation}.
In fact, all the known abundant classes of diffeomorphisms are in the limit of diffeomorphisms exhibiting
a homoclinic tangency. This motivated the following conjecture~\cite{palis-2005, Bonatti-Survey}, see also~\cite{Crov-ICM}.

\begin{conj*}[Bonatti, Palis]
There exists a dense and open subset $\cU$ of $\Diff^1(M)\setminus \overline \cT$ such that
the diffeomorphism $f\in \cU$ have at most finitely many quasi-attractors (and attractors).
\end{conj*}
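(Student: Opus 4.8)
The plan is to reduce the conjecture, for a $C^1$-generic $f\in\Diff^1(M)\setminus\overline{\cT}$, to a statement about finitely many \emph{well-understood} quasi-attractors, and then to produce the dense open set not from a Kupka--Smale type argument but from the \emph{robustness} of a geometric configuration attached to each quasi-attractor. Two preliminary reductions are needed. First, every attractor is a quasi-attractor, so it suffices to bound the number of quasi-attractors. Second, by~\cite{BC-rec} and the Baire category theorem, for $f$ in a residual subset $\cR$ of $\Diff^1(M)\setminus\overline{\cT}$ the quasi-attractors of $f$ are exactly its Lyapunov-stable chain recurrence classes, they are Hausdorff limits of periodic orbits, and they depend lower-semicontinuously on $f$. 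The subtle point is that ``at most finitely many quasi-attractors'' is a priori only a \emph{residual} property, whereas the conjecture asks for a \emph{dense open} one; this forces the proof to attach to each quasi-attractor a geometric feature of definite ``size'' that persists under $C^1$-perturbations, so as to get a bound $N=N(U)$ on the number of quasi-attractors that is uniform over a $C^1$-neighbourhood $U$ of $f$ --- whence density and openness of the set of diffeomorphisms realizing that bound.

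The structural input is the description of $C^1$-generic dynamics far from tangencies (Wen; Gourmelon; Crovisier--Pujals--Sambarino; Bonatti--Gan; see also~\cite{Bonatti-Survey, Crov-habilitation}): for $f\in\cR$, each quasi-attractor $Q$ either is a hyperbolic attractor --- in which case it is robustly isolated and only finitely many such occur --- or it carries a partially hyperbolic dominated splitting $T_QM=E^{ss}\oplus E^c_1\oplus\cdots\oplus E^c_k$ with $E^{ss}$ uniformly contracted, each center bundle $E^c_i$ one-dimensional, and the extremal bundle $E^c_k$ playing the role of a (nonuniform) central-unstable direction. Here ``far from $\overline{\cT}$'' enters decisively, via Gourmelon's mechanism producing tangencies out of periodic orbits with weak central data: this is what forces the center bundles to be one-dimensional and prevents the extremal central behaviour from collapsing. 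Thus $Q$ is partially hyperbolic with one-dimensional extremal center bundle $E^c:=E^c_k$, the complementary bundle $E^{ss}\oplus E^c_1\oplus\cdots\oplus E^c_{k-1}$ acting as a center-stable bundle along which no further attractor can detach --- precisely the situation to which the main technical result of this paper applies.

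One then studies the partially hyperbolic lamination obtained by saturating a central curve of $Q$ tangent to $E^c$ by its strong-stable leaves, and applies the robust geometric property of partially hyperbolic laminations established in this paper --- a property guaranteed to hold after a $C^1$-perturbation of $f$. This forces $Q$ to contain a local unstable set whose size is bounded below, uniformly over a $C^1$-neighbourhood of $f$. Since distinct quasi-attractors are pairwise disjoint and each carries such a piece of definite size, their number is bounded by a constant depending only on the neighbourhood; a Baire and semicontinuity argument then upgrades the residual finiteness to finiteness on a dense open set, establishing the conjecture. The same dichotomy, specialized to $\dim M=3$, gives the corollary announced in the abstract: the alternative to the large-plaque property is a center bundle that remains robustly non-expanding, and that is shown to force Newhouse's phenomenon.

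The main obstacle is exactly the reduction carried out in the second step. In full generality a far-from-tangencies quasi-attractor of a generic diffeomorphism need not be partially hyperbolic with a single one-dimensional center in the strong sense required: although each $E^c_i$ is one-dimensional, it may individually fail to be uniformly contracted or expanded; heterodimensional cycles inside $Q$ (which are \emph{not} excluded by the hypothesis) may mix the indices of periodic orbits; and $E^c_k$ may be only \emph{nonuniformly} unstable, so the unstable piece one captures need not have size uniform in $f$. Ruling out, after a $C^1$-perturbation and without creating a homoclinic tangency, arbitrarily thin central-unstable behaviour --- and thereby making the ``definite size'' genuinely uniform --- is the crux. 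It requires combining Gourmelon-type perturbations of the periodic data with the lamination-geometry estimates, and it is precisely the point at which the present paper fully settles the partially hyperbolic case and, in the general far-from-tangencies case, only the three-dimensional one.
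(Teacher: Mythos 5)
The statement you are proving is the Bonatti--Palis \emph{conjecture}, which this paper does not prove: it establishes only Theorem A (finiteness on an open dense subset of $\PH^1_{c=1}(M)$) and Theorem B (a dichotomy in dimension $3$ whose second alternative is the Newhouse phenomenon), and it explicitly defers the completion of the conjecture, even in dimension $3$, to the separate work \cite{CPS} on finiteness of sinks, calling that ``a problem of different nature.'' Your proposal correctly identifies the two ingredients the paper actually combines --- the structure theorem for chain classes far from tangencies (\cite{CSY}, quoted as Theorem~\ref{teo-CPSY}) and the robust non-joint-integrability property feeding into Theorem~\ref{Teo-FinitelyManyMinimal} --- but the reduction in your second step is exactly where the argument breaks, and your own final paragraph concedes this. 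Concretely: Theorem~\ref{teo-CPSY} gives a dominated splitting $E^s\oplus E^c_1\oplus\dots\oplus E^c_k\oplus E^u$ in which the extremal bundles may be \emph{trivial} when the class contains periodic points, and the center may consist of several one-dimensional bundles. The paper's technical machinery (Theorems~\ref{Theorem-MainGenerique} and~\ref{Teo-FinitelyManyMinimal}) requires genuine uniform partial hyperbolicity with non-trivial $E^u$ and a single one-dimensional center; your substitute --- treating a nonuniformly unstable $E^c_k$ as the unstable direction --- is not covered by any result in the paper, and making it uniform is precisely the unsolved content of the conjecture. In dimension $3$ the paper salvages non-triviality of $E^s$ and $E^u$ only under the additional hypothesis that there are finitely many sinks (Remark~\ref{r.CPSY} plus $\dim M=3$), which is why Theorem B is a dichotomy rather than a finiteness statement; in higher dimension even that escape is unavailable, since $\dim E^c\geq 2$ can occur.

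A secondary inaccuracy: your finiteness mechanism (each quasi-attractor robustly contains an unstable piece of definite size, so disjointness gives a uniform bound $N(U)$ and hence openness) is not how the paper argues and is not justified by its results. Theorem~\ref{Teo-FinitelyManyMinimal} yields finiteness for each $f$ in an open set by \emph{contradiction}: an infinite family of minimal $\cW^u$-saturated sets would accumulate, and one derives an intersection of stable sets --- via the topological Lemma~\ref{l.top} for minimal sets with strong connections, and via entropy/Ruelle estimates (Proposition~\ref{prop:largemanifold}) producing large stable disks for the lower-dimensional ones --- contradicting Lemma~\ref{l.disjstable}. No uniform count over the neighbourhood is produced or needed; openness of Theorem A comes directly from the fact that the hypotheses of Theorem~\ref{Teo-FinitelyManyMinimal} are stable under $C^1$-perturbation. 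So your proposal should be read as a (partly accurate) program for the conjecture, not a proof; the genuine gap is the passage from the far-from-tangencies structure theorem to uniform partial hyperbolicity with non-trivial extremal bundles, together with the exclusion of infinitely many sinks.
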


More generally, one may consider the chain-recurrence classes of diffeomorphisms~\cite{BC-rec,Crov-habilitation},
which decompose the chain-recurrent dynamics. Bonatti has conjectured~\cite{Bonatti-Survey} that for diffeomorphisms in $\cU$,
the number of chain-recurrence classes is finite.

On surfaces, this conjecture is implied by a stronger result,
proved by Pujals and Sambarino~\cite{Pujals-Sambarino}.
This paper is a step towards this conjecture when $M$ has dimension $3$ and in some regions of $\Diff^1(M)$,
when $M$ has dimension larger than $3$. These results were announced in \cite{Crov-ICM} and \cite{CroPot}.
\medskip

We consider the (open) subset $\PH^1_{c=1}(M)$ of $C^1$-diffeomorphisms $f$ of $M$
which preserve a \emph{partially hyperbolic} decomposition, with a one-dimensional center, i.e.
which preserve a splitting $T M = E^s \oplus E^c \oplus E^u$, $\dim(E^c)=1$, with the property that
for some $\ell >0$ and for every unit vectors  $v^\sigma \in E^\sigma_x$ ($\sigma=s,c,u$) we have
that:
\begin{equation}\label{eq:PH} \|Df_x^\ell v^s \| < \min \{1, \|Df_x^\ell v^c \|\} \leq \max
\{ 1, \|Df_x^\ell v^c \| \} < \|Df_x^\ell v^u \|.
\end{equation}
We will always assume that both $E^s,E^u$ are non-trivial.
Partial hyperbolicity has been playing a central role in the study of differentiable dynamics due to its robustness and how it is related with the absence of homoclinic tangencies (see \cite{Crov-habilitation,CSY}). It also prevents the existence of sinks.

Under some global assumptions it is sometimes possible to show that partially hyperbolic dynamics with one-dimensional center have finiteness and sometimes even uniqueness of quasi-attractors (see e.g. \cite{BG,Pot2}, \cite[Section 6.2]{HP} or \cite[Section 5]{Pot2}). However, it is easy to construct examples of partially hyperbolic diffeomorphisms with infinitely many quasi-attractors (e.g. by perturbing Anosov$\times$Identity on $\TT^3=\TT^2 \times S^1$). Here, we prove that this is a fragile situation:

\begin{teoA}
There exists an open and dense subset $\cO$ of $\PH^1_{c=1}(M)$ such that
every $f\in \cO$ has at most finitely many quasi-attractors.
\end{teoA}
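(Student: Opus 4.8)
The plan is to reduce the statement to a quantitative dichotomy for the quasi-attractors of a perturbed system and then to close by a Baire-category argument.

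\emph{Reduction to partially hyperbolic laminations and a center dichotomy.} The first point is that a quasi-attractor $K$ of $f\in\PH^1_{c=1}(M)$ is automatically saturated by the strong unstable foliation $\cF^{uu}$ tangent to $E^u$: given $x\in K$ and $y\in\cF^{uu}(x)$, the backward orbit of $y$ is asymptotic to that of $x$, which stays in every trapping neighborhood of $K$, so $y$ lies in the maximal invariant set of such a neighborhood; letting the neighborhood shrink forces $\cF^{uu}(x)\subset K$. Thus one must bound the number of compact invariant $\cF^{uu}$-saturated chain-transitive sets admitting a basis of trapping neighborhoods -- the \emph{partially hyperbolic laminations} of the abstract. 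Since $E^c$ is one-dimensional and dominated, each such $K$ falls into one of two cases according to the sign of $\sup_{\mu}\lambda^c(\mu)$, the supremum of the center Lyapunov exponent over the invariant probabilities carried by $K$. If this supremum is negative, then $E^s\oplus E^c$ is uniformly contracted over $K$ (uniform contraction of a dominated bundle seen by all invariant measures), so $K$ is a hyperbolic attractor, robustly so. Otherwise $K$ carries an invariant measure of non-negative center exponent, and a Pliss argument produces points of $K$ through which passes a center-unstable disk (tangent to $E^c\oplus E^u$) of a definite inner radius whose interior is trapped, hence -- after a small perturbation -- contained in $K$.

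\emph{The robust geometric property.} The heart of the argument is to show that after an arbitrarily small $C^1$-perturbation of $f$, supported away from the finitely many hyperbolic attractors, every quasi-attractor $K$ that is not a hyperbolic attractor enjoys a \emph{robust} geometric property: the $\cF^{uu}$-saturation of the center-unstable disk contained in $K$ returns to cross some center-stable plaque in an open subset whose inner radius is bounded below by a constant $\delta_0>0$ independent of $K$; equivalently, $K$ contains a ball of radius $\delta_0$. The two perturbative ingredients are a connecting-lemma (Hayashi-type) mechanism, used both to force the transverse return producing the crossing and to merge any two such quasi-attractors whose center-unstable disks come $\delta_0$-close, and a control of how center-unstable disks propagate along the $\cF^{uu}$-lamination, so that the bound $\delta_0$ becomes a $C^1$-open condition on the pair $(f,K)$. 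Since distinct quasi-attractors are disjoint and $M$ is compact, only finitely many can each contain a ball of radius $\delta_0$, which bounds the quasi-attractors that are not hyperbolic attractors; the hyperbolic attractors are bounded by a parallel, simpler perturbation in the one-dimensional center direction, in the spirit of Pujals--Sambarino and Ma\~n\'e, preventing an infinite accumulation of attracting center behavior.

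\emph{Genericity and main obstacle.} Once $\delta_0$ is fixed, the conditions ``$K$ is a hyperbolic attractor'' and ``$K$ contains a ball of radius $\geq\delta_0$'' are $C^1$-open, and a diffeomorphism all of whose quasi-attractors have one of these robust forms has uniformly finitely many of them; the perturbation step above shows that such diffeomorphisms are $C^1$-dense in $\PH^1_{c=1}(M)$, and a standard Baire argument over a countable exhaustion by the corresponding open sets then yields the desired open and dense subset $\cO\subset\PH^1_{c=1}(M)$. The difficulty is concentrated in the second step: promoting the implication ``non-hyperbolic $\Rightarrow$ contains a center-unstable disk'' to the genuinely robust and uniformly quantitative statement ``contains a definite ball'', stable under perturbations of both the dynamics and the quasi-attractor. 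This requires understanding the global return geometry of the strong unstable lamination through a center-unstable disk and combining it with the connecting techniques so as to rule out thin quasi-attractors accumulating on one another -- which is exactly the new robust geometric property of partially hyperbolic laminations announced in the abstract.
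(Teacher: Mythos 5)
There is a genuine gap, and it sits exactly where you locate ``the heart of the argument''. Your plan hinges on the claim that, after one small $C^1$-perturbation, \emph{every} quasi-attractor that is not uniformly hyperbolic contains a ball of uniform radius $\delta_0>0$, and that pairs of quasi-attractors coming $\delta_0$-close can be merged by a connecting-lemma perturbation. Neither assertion is justified, and the second is not available at all: distinct quasi-attractors lie in distinct chain-recurrence classes, so there is no pseudo-orbit joining them and Hayashi-type connecting arguments do not apply; moreover connecting-lemma perturbations are tailored to one orbit (or one pair of pieces) at a time, while here you would need a single small perturbation handling all of the a priori \emph{infinitely many} quasi-attractors simultaneously and in a way that survives further perturbation. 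The ``contains a definite ball'' property is also much stronger than anything the problem requires or than what is true: partially hyperbolic quasi-attractors with one-dimensional center are typically nowhere dense, and no mechanism is offered that would fatten all of them at once to uniform interior size. In addition, the preliminary dichotomy is shaky at the boundary case: an invariant measure with center exponent exactly $0$ does not yield, by a Pliss argument, a center-unstable disk of definite radius inside $K$.

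The paper's route is different and avoids these issues. The global perturbation it constructs (wandering slices disjoint from many iterates, plus a sparse combinatorial section of the unstable lamination, Theorem \ref{Thm-PerturbationResult}) produces a \emph{quantitative non-joint-integrability}: for every pair of points $x,y$ of an invariant $\cW^u$-saturated set lying on a common stable leaf with $d_s(x,y)\in(r,r')$, some $x'\in\cW^u_t(x)$ has $d(\cW^s_\gamma(x'),\cW^u_\gamma(y))\geq\delta$, robustly (Theorem \ref{Theorem-MainGenerique} and Lemma \ref{Lemma-RobustProperty}). Finiteness is then obtained not by giving quasi-attractors interior, but by showing (Theorem \ref{Teo-FinitelyManyMinimal}) that infinitely many disjoint minimal $\cW^u$-saturated sets are impossible: those with a strong stable connection would, by the non-joint-integrability and a topological intersection lemma (Lemma \ref{l.top}), have intersecting stable sets, contradicting disjointness (Lemma \ref{l.disjstable}); those without stable self-connections lie in locally invariant center-unstable submanifolds (Theorem \ref{teo:BC}) and carry uniform entropy, hence by Ruelle's inequality have points with stable disks of uniform size (Proposition \ref{prop:largemanifold}), which again forces intersections if there were infinitely many. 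Your reduction ``quasi-attractors are $\cW^u$-saturated, so it suffices to bound saturated pieces'' agrees with the paper, but the quantitative robust property you would need is precisely the content of the paper's global perturbation scheme, and the connecting-lemma/ball-of-radius-$\delta_0$ substitute you propose does not deliver it.
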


In dimension $3$, we obtain a stronger conclusion:

\begin{teoB}
Let $M$ be a 3-dimensional manifold. There is an open and dense subset $\cU\subset\Diff^1(M)\setminus \overline\cT$
of diffeomorphisms $f$ such that:
\begin{itemize}
\item either $f$ has at most finitely many quasi-attractors,
\item or $f$ is accumulated by diffeomorphisms with infinitely many sinks.
\end{itemize}
\end{teoB}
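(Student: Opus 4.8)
The plan is to deduce Theorem~B from Theorem~A together with a dichotomy for chain-recurrence classes of $C^1$-generic diffeomorphisms in dimension $3$ that lie far from homoclinic tangencies. First I would recall the structural consequences of being in $\Diff^1(M)\setminus\overline\cT$: by the results on the absence of tangencies (Crovisier--Sambarino--Yang and related work) the chain-recurrence classes of a $C^1$-generic $f$ outside $\overline\cT$ carry a dominated splitting, and in particular each quasi-attractor $Q$ either is a hyperbolic attractor, or is contained in the partially hyperbolic region — its tangent bundle over $Q$ admits a dominated splitting $E^s\oplus E^c\oplus E^u$ with $\dim E^c\le 1$ (the extreme bundles may be trivial). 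Since $\dim M=3$, $\dim E^c\le 1$. The case $\dim E^c=0$ gives hyperbolicity. So the only nontrivial case is a quasi-attractor that is partially hyperbolic with one-dimensional center (both extreme bundles then need to be shown nontrivial: $E^u$ is nontrivial since a quasi-attractor cannot be contained in a proper attractor-free region, and $E^s$ nontrivial because otherwise the class would be an attractor of a special dynamically-defined type handled separately, e.g. by Potrie or by a direct argument).

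Next I would set up a dichotomy at the level of a single quasi-attractor $Q$ that is partially hyperbolic with $\dim E^c=1$. Either the center behaves hyperbolically along periodic orbits in a robust/uniform way — in which case $Q$ is in fact a hyperbolic attractor or admits finitely-many-attractors behavior locally — or there are periodic points in (or limits into) $Q$ whose center exponent is arbitrarily close to $0$, which by the standard perturbation technology (Bonatti--Gan--Wen, Gan--Yang type arguments) allows one to create, by an arbitrarily small $C^1$-perturbation, a periodic point whose center multiplier is exactly neutral and then, pushing further, a sink. Because $Q$ is a quasi-attractor, this sink can be produced \emph{inside} the trapping region of $Q$; and crucially the partially hyperbolic structure $E^s\oplus E^c\oplus E^u$ with one-dimensional center on the whole neighborhood means one can iterate this mechanism in disjoint trapping regions to manufacture arbitrarily many sinks simultaneously. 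This is exactly the Newhouse phenomenon in the $C^1$ category for partially hyperbolic systems with one-dimensional center, and the open set of $f$ accumulated by diffeomorphisms with infinitely many sinks is obtained by taking the union of such accumulation sets.

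To assemble these into an open and dense $\cU$, I would start from the open and dense set $\cO\cap\big(\Diff^1(M)\setminus\overline\cT\big)$ (intersecting $\cO$ from Theorem~A with the complement of $\overline\cT$, which is open) where there are finitely many quasi-attractors, and also use the fact that $\overline{\mathrm{NS}}$, the closure of the set of diffeomorphisms with infinitely many sinks, is closed with nonempty (possibly empty) interior relative to the partially hyperbolic region. On the part of $\Diff^1(M)\setminus\overline\cT$ that is partially hyperbolic with one-dimensional center, Theorem~A already gives finiteness; on the complement, the dominated-splitting structure forces every quasi-attractor to be hyperbolic, hence finite in number by Smale theory plus genericity. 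The remaining subtlety is the \emph{boundary} behavior: diffeomorphisms that are limits of partially hyperbolic-with-one-dimensional-center systems but not themselves such. Here I would argue that such $f$, being outside $\overline\cT$, are either approximated by the Theorem~A open set (first alternative) or approximated by systems in the Newhouse region constructed above (second alternative) — this density statement is where the robust geometric property of partially hyperbolic laminations advertised in the abstract does the work, because it is what guarantees that near the boundary one can either stabilize into finitely many quasi-attractors or unfold a neutral center exponent into infinitely many sinks.

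The main obstacle I expect is precisely this last step: showing that no diffeomorphism far from tangencies in dimension $3$ escapes the dichotomy, i.e. that a quasi-attractor which is partially hyperbolic with one-dimensional center but whose center exponents along periodic orbits do \emph{not} degenerate must already fall under the finiteness conclusion. Controlling the center exponent uniformly over a whole quasi-attractor — ruling out "mostly hyperbolic but with a few nearly-neutral periodic orbits accumulating somewhere delicate" — requires the ergodic-closing-lemma machinery together with the geometric control of the center lamination, and it is the technical heart that Theorem~A's proof method is designed to supply.
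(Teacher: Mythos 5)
There is a genuine gap, and it sits exactly where you locate your ``main obstacle'': the mechanism you propose for the second alternative. You want to take a quasi-attractor $Q$ which is partially hyperbolic with one-dimensional center, find periodic orbits with center exponent close to $0$, and ``unfold'' them by small $C^1$-perturbations into sinks \emph{inside the trapping region of} $Q$, then iterate to manufacture infinitely many sinks. This cannot work as stated: on a neighborhood of a partially hyperbolic set with non-trivial $E^u$, the unstable cone field and the uniform expansion along it persist for all $C^1$-close diffeomorphisms (Proposition \ref{Proposition-Cones}), so no sink can be created in that region at all -- as the introduction of the paper notes, partial hyperbolicity with non-trivial extremal bundles prevents sinks. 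A weak center exponent can at best be flipped into a periodic orbit of stable index $\dim(E^s\oplus E^c)$, never into a sink, since $E^u$ cannot be contracted by a small perturbation. Moreover, even where a single sink could be produced, upgrading ``one sink per perturbation'' to ``infinitely many sinks for one diffeomorphism'' is a Newhouse-type statement which, far from homoclinic tangencies, is precisely what is \emph{not} available in the $C^1$ topology; the paper explicitly defers questions about sinks far from tangencies to the companion work \cite{CPS} and stresses that they are of a different nature. Your closing paragraph, which hopes that the ergodic-closing/center-exponent machinery will close the dichotomy, is therefore aimed at a step that the actual proof never needs.

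The paper's route avoids creating sinks altogether. For $f$ in a residual subset $\cG$ of $\Diff^1(M)\setminus\overline\cT$ (continuity points of the map counting quasi-attractors, together with the conclusions of Theorem \ref{teo-CPSY} and Theorem \ref{Teo-FinitelyManyMinimal} for $f$ and $f^{-1}$) one proves: if $f$ has infinitely many quasi-attractors, then infinitely many of them \emph{are already sinks}. Indeed, if infinitely many quasi-attractors $Q_n$ were non-trivial, a Hausdorff limit $K$ of them is neither a sink nor a source nor uniformly hyperbolic, so Theorem \ref{teo-CPSY} together with Remark \ref{r.CPSY} (here the contradiction hypothesis that there are only finitely many sinks is used to get non-trivial extremal bundles) shows that $K$, and hence the maximal invariant set in a neighborhood of $K$, is partially hyperbolic with one-dimensional center; the $Q_n$ for large $n$ lie in that neighborhood and are $\cW^u$-saturated, contradicting the finiteness of minimal $\cW^u$-saturated subsets given by Theorem \ref{Teo-FinitelyManyMinimal}. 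The open and dense set is then $\cU=\cO\cup\bigl(\Diff^1(M)\setminus\overline{\cT\cup\cO}\bigr)$, where $\cO$ consists of the diffeomorphisms all of whose $C^1$-neighbors have finitely many quasi-attractors; any $f\in\cU\setminus\cO$ is approximated by generic $g_n\notin\cO$, which by the above already have infinitely many sinks. Note also that your assembly leans on Theorem A, which only concerns globally partially hyperbolic diffeomorphisms, whereas Theorem B requires the localized statement (Theorem \ref{Teo-FinitelyManyMinimal}, equivalently Theorem C) applied near the accumulation set of the quasi-attractors, plus the semicontinuity of the number of quasi-attractors to handle the first alternative robustly.
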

Another work~\cite{CPS} will address the finiteness of the set of sinks
for diffeomorphisms far from homoclinic tangencies
and will conclude the proof of Bonatti-Palis Conjecture in dimension $3$. We emphasize that this corresponds to a problem of different nature.
\bigskip

More generally we consider invariant compact sets $\Lambda$ which are \emph{partially hyperbolic}, i.e.
which admit a continuous $Df$-invariant splitting $T_\Lambda M = E^s \oplus E^c \oplus E^u$ and $\ell >0$ with the property that for every
unit vectors  $v^\sigma \in E^\sigma_x$ ($\sigma=s,c,u$) the property~\eqref{eq:PH} holds.
Theorem A is a consequence of a more precise result:
\begin{teoC}
There exists a dense G$_\delta$ subset $\cG_1$ of $\Diff^1(M)$ with the following property.
Consider $f_0 \in \cG_1$ and a compact set $U\subset M$ such that $\Lambda = \bigcap_{n\in \ZZ} f_0^n(U)$
is a partially hyperbolic set with one-dimensional center.

Then, for every $f$ $C^1$-close to $f_0$
the set $U$ contains at most finitely many quasi-attractors of $f$.
\end{teoC}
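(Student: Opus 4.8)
The plan is to take for $\mathcal{G}_1$ a dense $G_{\delta}$ subset of $\Diff^1(M)$ on which Conley theory and the connecting lemma have their standard generic consequences (every quasi-attractor is a Lyapunov-stable chain-recurrence class; the chain-recurrence class of a point depends lower-semicontinuously on it; a Hausdorff limit of chain-transitive sets is contained in a single chain-recurrence class), together with the $C^1$-generic hypotheses needed to invoke the robust structure theorem for partially hyperbolic laminations established earlier in the paper. The argument is then local to $U$ and robust. \emph{Step 1: the quasi-attractors of $f$ in $U$ are partially hyperbolic and $uu$-saturated.} Partial hyperbolicity being open, after slightly shrinking $U$ one may assume that $\Lambda_f=\bigcap_{n\in\ZZ}f^n(U)$ is partially hyperbolic with one-dimensional center for every $f$ $C^1$-close to $f_0$, with cone-fields, strong stable and strong unstable foliations $\mathcal{W}^{ss}_f,\mathcal{W}^{uu}_f$ (with uniformly sized local leaves) and center-stable and center-unstable plaque families varying continuously with $f$. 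If $K\subset U$ is a quasi-attractor of $f$, then $K\subset\Lambda_f$ is compact, $f$-invariant, chain-transitive and Lyapunov-stable; by definition $K$ has a basis of neighborhoods $V$ with $f(\overline V)\subset V$, and since backward iterates contract along $\mathcal{W}^{uu}_f$, any $y$ in the local strong unstable manifold $W^{uu}_{\mathrm{loc}}(x)$ of a point $x\in K$ has a backward iterate inside $V$, hence lies in $V$; as $V$ ranges over this basis one gets $W^{uu}(x)\subset K$. So $K$ is $uu$-saturated and, locally, is a lamination with strong-unstable disks as leaves and a compact transverse set inside each center-stable plaque.

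\emph{Step 2: uniform trapping neighborhoods.} This is the heart of the proof and the place where the robust geometric property of partially hyperbolic laminations enters. One shows that each $uu$-saturated chain-transitive set $K\subset U$ admits a trapping neighborhood $V_K$ (so $f(\overline{V_K})\subset V_K$) whose size transverse to $\mathcal{W}^{uu}_f$, that is, along the center-stable plaques, is bounded below by a constant $\eta>0$ that is \emph{independent of $K$ and of $f$ near $f_0$}. Heuristically, in the center direction the dynamics near $K$ reduces, through the strong-unstable holonomy and the center-stable plaque family, to a one-dimensional-type system, and the genericity of $f_0$ forces the center behavior just outside $K$ to be attracting towards $K$ on the relevant side; if this failed one could produce, after an admissible $C^1$-perturbation, a tangency-type or non-dominated configuration inside the partially hyperbolic set $\Lambda_f$, which is exactly what the robust structure theorem rules out. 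Since this is an open condition, it persists for all $f$ in a $C^1$-neighborhood of $f_0$.

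\emph{Step 3: finiteness.} Distinct quasi-attractors $K\neq K'$ of $f$ in $U$ are disjoint and satisfy $K'\cap V_K=\emptyset$, so the tubes $V_K$ are pairwise disjoint in the center-stable direction with $cs$-width at least $\eta$, while spanning a uniform amount in the strong-unstable direction. Since $\Lambda_f\subset U$ is compact and carries a foliation with uniformly sized local strong-unstable leaves, a covering/volume estimate bounds the number of such tubes, hence the number of quasi-attractors of $f$ in $U$, by some $N=N(f_0,U)$ independent of $f$.

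The main obstacle is Step 2: a quasi-attractor with one-dimensional center need not be normally hyperbolic, so its trapping neighborhood cannot be produced from a uniform transverse contraction; it must be extracted from the generic dynamics in the one-dimensional center direction, and, more delicately, one must show that this trapping survives with uniform size both as $f$ ranges over a full $C^1$-neighborhood of $f_0$ and as $K$ ranges over all quasi-attractors. This uniformity is precisely the content of the robust geometric property of partially hyperbolic laminations proved in the earlier sections, and the scheme above is essentially its repackaging as a finiteness statement.
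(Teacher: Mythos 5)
Your Step 1 is fine and matches what the paper uses (quasi-attractors inside $U$ are partially hyperbolic and $\cW^u$-saturated), but Step 2 is a genuine gap, and it is not a repackaging of the paper's robust geometric property. The property proved in Sections \ref{Section-Perturbations}--\ref{s.teogenerique} (Theorems \ref{Theorem-MainGenerique} and \ref{Thm-PerturbationResult}) is a quantitative \emph{non-joint integrability} of the strong stable and strong unstable leaves inside a $\cW^u$-saturated set: for any two points of the set lying on the same strong stable leaf at distance in $(r,r')$, some point $x'\in\cW^u_t(x)$ has its local strong stable manifold $\delta$-far from $\cW^u_\gamma(y)$. It says nothing about trapping neighborhoods of uniform center-stable width $\eta$, and no such statement is established (or needed) anywhere in the paper. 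Your heuristic justification for Step 2 also fails: if a quasi-attractor were accumulated by other classes, you could not perturb to create ``a tangency-type or non-dominated configuration inside $\Lambda_f$'', because every invariant set in $U$ is partially hyperbolic with one-dimensional center, hence dominated and far from tangencies; the perturbed Anosov$\times$Identity examples mentioned in the introduction have infinitely many quasi-attractors accumulating on each other while remaining robustly partially hyperbolic, so partial hyperbolicity alone cannot yield your uniform $\eta$, and the mechanism that excludes this generically must be exactly the non-joint integrability, which your argument never actually invokes. Consequently Step 3 (the covering/volume count of disjoint tubes, with a bound $N$ independent of $f$) has no support.

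The paper's actual route is different and you would need its ingredients: Theorem C is reduced to Theorem \ref{Teo-FinitelyManyMinimal} (finitely many \emph{minimal} $\cW^u$-saturated subsets in a neighborhood of the partially hyperbolic set, robustly), using that distinct quasi-attractors are disjoint and each contains a minimal $\cW^u$-saturated set. Theorem \ref{Teo-FinitelyManyMinimal} is then proved by a dichotomy: minimal sets with a strong stable connection are shown to be finite in number using property \eqref{eq:maingenerique} together with a topological intersection argument (Lemma \ref{l.top}) and the disjointness of stable sets of distinct minimal sets (Lemma \ref{l.disjstable}); minimal sets without strong connections are ``lower dimensional'', and Theorem \ref{teo:BC} plus a uniform entropy lower bound and Ruelle's inequality (Proposition \ref{prop:largemanifold}) produce a point whose stable set contains a disk of uniform radius $\eps$ tangent to $\cE^{cs}$, so that infinitely many of them would again force stable sets of distinct minimal sets to intersect. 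Finiteness comes from this accumulation/contradiction argument, not from packing uniformly thick trapping tubes. As written, your proposal asserts the crucial uniformity as ``precisely the content'' of the earlier sections when it is not, so the core of the proof is missing.
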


As a consequence we obtain a (weak) version of an unpublished Theorem by
Bonatti-Gan-Li-Yang (\cite{BGLY}).

\begin{corD}
There exists a dense G$_\delta$ subset $\cG_2$ of $\Diff^1(M)$ such
that if $f\in \cG_2$ and $Q$ is a partially hyperbolic
quasi-attractor for $f$ with one dimensional center, then, $Q$ is
not accumulated by other quasi-attractors.
\end{corD}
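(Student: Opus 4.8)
\medskip
\noindent\textbf{Proof proposal.}
The plan is to take $\cG_2=\cG_1$ and to deduce Corollary~D directly from Theorem~C. Let $f\in\cG_2$ and let $Q$ be a partially hyperbolic quasi-attractor of $f$ with one-dimensional center. Since partial hyperbolicity with a prescribed index is an open condition, the $Df$-invariant splitting over $Q$ extends to invariant cone fields over an open neighborhood $O$ of $Q$; hence every compact $f$-invariant subset of $O$ is partially hyperbolic with one-dimensional center. First I would fix a compact neighborhood $U$ of $Q$ with $U\subset O$: then $\Lambda:=\bigcap_{n\in\ZZ}f^n(U)$ is a compact $f$-invariant subset of $O$, so it is partially hyperbolic with one-dimensional center. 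Thus Theorem~C applies with $f_0=f$ and this $U$, and — since $f$ lies in every $C^1$-neighborhood of itself — we conclude that $U$ contains at most finitely many quasi-attractors of $f$; note that $Q$ is one of them, as $Q\subset\interior(U)$.

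Next, using that $Q$ is a quasi-attractor, I would choose an open trapping neighborhood $U'$ of $Q$ with $\overline{U'}\subset\interior(U)$ and $f(\overline{U'})\subset U'$, and let $\delta>0$ be the distance from the compact set $f(\overline{U'})$ to $M\setminus U'$. The elementary point is that \emph{any} quasi-attractor $Q'$ of $f$ meeting $\interior(U')$ is contained in $\overline{U'}$, and hence in $U$. Indeed, if $x\in Q'\cap\interior(U')$, then every $\eps$-pseudo-orbit issued from $x$ with $\eps<\delta$ remains in $\overline{U'}$ (inductively, each point of the pseudo-orbit lies in $\overline{U'}$, its image then lies in $f(\overline{U'})$, and the next point, being within $\eps<\delta$ of that image, stays inside $U'$); since $Q'$ is chain-transitive, every point of $Q'$ is reached from $x$ by such pseudo-orbits and therefore belongs to the closed set $\overline{U'}$. (The same confinement argument shows a quasi-attractor is a chain-recurrence class, so distinct quasi-attractors are disjoint compact sets.)

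Combining the two steps, only finitely many quasi-attractors of $f$ meet $\interior(U')$, and $Q$ is one of them; write them $Q=Q_0,Q_1,\dots,Q_k$. The union $F:=Q_1\cup\dots\cup Q_k$ is compact and disjoint from $Q$, so $V:=\interior(U')\setminus F$ is an open neighborhood of $Q$ meeting no quasi-attractor other than $Q$. Hence $Q$ is not accumulated by other quasi-attractors, which is exactly the assertion of Corollary~D.

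I do not expect a serious obstacle: the statement is a soft consequence of Theorem~C together with standard facts about trapping regions and chain-recurrence classes. The two points that need a little care are the openness of partial hyperbolicity — used to manufacture the compact set $U$ on which Theorem~C can be invoked — and the pseudo-orbit argument confining every nearby quasi-attractor inside $U$; all of the real content of the corollary is carried by Theorem~C.
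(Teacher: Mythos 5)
Your argument is correct and is essentially the paper's proof (which simply says to apply Theorem~C to the maximal invariant set in a neighborhood of $Q$), with the implicit details made explicit. In particular, your cone-field extension of partial hyperbolicity to $U$ and the pseudo-orbit confinement of nearby quasi-attractors inside a trapping neighborhood of $Q$ are exactly the standard facts the paper leaves unstated.
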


Such quasi-attractors are called \emph{essential attractors} in
\cite{BGLY} since it follows from their properties that their
basin contains a residual subset in an attracting neighborhood. In
\cite{BGLY} they prove that \emph{every} quasi-attractor for a
$C^1$-generic diffeomorphism $C^1$-far from homoclinic tangencies
is an essential attractor.
\bigskip

\paragraph{\bf Discussion of the techniques.}
The finiteness of the quasi-attractors relies on a geometric property of invariant sets laminated by unstable manifolds:
a \emph{non-joint integrability} between the strong stable and unstable directions, see Figure~\ref{l.NJI}.
Such geometric properties of unstable laminations already appeared for instance in the study of partially hyperbolic attractors~\cite{Pujals,Pujals2,CP}.
When the system is globally partially hyperbolic and volume preserving, a different but related notion -- the accessibility -- plays an important role for proving the ergodicity,
see for instance~\cite{pughshub}.

The main purpose of this work is to break the joint integrability by $C^1$-perturbation.
It was known how to break it for one pair, or even for a dense collection of pairs, of unstable leaves.
We need however to break it for any pair of unstable leaves which intersect a same stable manifold.
This strong form of non joint integrability, much more difficult to obtain, requires a global perturbation.
This perturbative result (Theorem \ref{Thm-PerturbationResult}) is independent of the one-dimensionality of the center direction.  We hope the perturbation result will find applications beyond the ones appearing in this paper (for example, it is used in \cite{ACP} to obtain robust transitivity of $C^1$-generic partially hyperbolic transitive diffeomorphisms with one dimensional center, see also \cite{CroPot}).

\subsection*{Organization of the paper}
In section \ref{Section-Results} we give precise statements of the main technical results. In particular, Theorem \ref{Theorem-MainGenerique} and the remark after provide the geometric property that is satisfied by partially hyperbolic sets saturated by strong unstable manifolds in a $C^1$-open and dense set of diffeomorphisms.  In section \ref{Section-Proofs} we use these statements to give proofs of Theorems A , B, C and Corollary D.

The rest of the paper is devoted to the proofs of Theorems \ref{Theorem-MainGenerique}, \ref{Thm-PerturbationResult} and \ref{Teo-FinitelyManyMinimal}. In section \ref{Section-Preliminaries} some preliminaries are introduced. Section \ref{Section-Perturbations} gives a proof of Theorem \ref{Thm-PerturbationResult}, this section is the technical core of the paper. In Section \ref{s.teogenerique}, using a standard Baire argument, we deduce Theorem \ref{Theorem-MainGenerique} from Theorem \ref{Thm-PerturbationResult}. Finally, in section \ref{Section-Applications} we prove Theorem \ref{Teo-FinitelyManyMinimal}.
\medskip

\noindent{\it Acknowledgments.
We are grateful to D. Yang and J. Zhang for their comments on a first version of the text.}

\section{Technical results}\label{Section-Results}

It is well known that partially hyperbolic sets carry $f$-invariant strong stable and strong unstable laminations $\cW^s$ and $\cW^u$ by $C^1$-leaves tangent to $E^s$ and $E^u$ when intersecting $\Lambda$  (see Section \ref{Section-Preliminaries} for precise definitions and existence theorems).

Given $x\in \Lambda$ we denote by $\cW^\sigma(x)$ the leaf of $\cW^\sigma$ ($\sigma=s,u$) through $x$. For $\eps>0$ we denote by $\cW^\sigma_\eps(x)$ the $\eps$-disk centered at $x$ in $\cW^\sigma(x)$ with the metric $d_\sigma$ given by the Riemannian metric induced in $\cW^\sigma(x)$ from its immersion in $M$.

\begin{teo}\label{Theorem-MainGenerique} There exists a $G_\delta$-dense subset $\cG$ of
$\Diff^1(M)$ such that for every $f \in \cG$ and $\Lambda \en M$ a
compact $f$-invariant partially hyperbolic set which is
$\cW^u$-saturated and for every $r,r',t,\gamma>0$ sufficiently small,
there exists $\delta>0$ with the following property.

If $x,y \in \Lambda$ satisfy $y \in \cW^s(x)$ and $d_s(x,y) \in (r, r')$, then there is $x' \in \cW^u_t (x)$ such that:
\begin{equation}\label{eq:maingenerique} d(\cW^s_\gamma(x'), \cW^u_\gamma(y)) \geq \delta \, .
\end{equation}
\end{teo}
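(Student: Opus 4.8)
The plan is to derive Theorem~\ref{Theorem-MainGenerique} from the robust perturbative statement Theorem~\ref{Thm-PerturbationResult} by a Baire category argument: once \eqref{eq:maingenerique} holds for a diffeomorphism with a \emph{definite} gap $\delta$ it persists under small $C^1$-perturbations, while Theorem~\ref{Thm-PerturbationResult} produces, arbitrarily close to any prescribed diffeomorphism, examples realizing it; hence $\cG$ will be a countable intersection of open and dense sets, indexed by finitely much combinatorial data. To organize the quantifiers I would fix a countable neighbourhood basis $\{U_i\}_{i\in\NN}$ of $M$ by compact sets (any compact $K$ with a neighbourhood $W$ satisfies $K\subseteq U_i\subseteq W$ for some $i$) and rational scales $\rho_{i,k}\downarrow 0$ attached to each $U_i$, and reduce the statement to these data using two observations. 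First, partial hyperbolicity is $C^1$-robust (a fixed $\ell$ in \eqref{eq:PH} works on a whole neighbourhood), the local leaves $\cW^s_\gamma(\cdot)$, $\cW^u_\gamma(\cdot)$ of size below the scale are intrinsic to their points and depend continuously on the point and on the diffeomorphism, and $\Lambda_i(f):=\bigcap_{n\in\ZZ}f^{n}(U_i)$ is upper semicontinuous in $f$; hence any compact $f$-invariant partially hyperbolic $\cW^u$-saturated set is absorbed into a partially hyperbolic $\cW^u$-saturated set of the form $\Lambda_i(f)$, and since the leaves occurring in \eqref{eq:maingenerique} do not depend on which partially hyperbolic set contains them, it suffices to prove the conclusion for all these $\Lambda_i(f)$. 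Second, the conclusion is monotone — widening $(r,r')$ and shrinking $t,\gamma$ make it harder, while $\cW^s_{\gamma'}(x')\subseteq\cW^s_\gamma(x')$ and $\cW^u_{\gamma'}(y)\subseteq\cW^u_\gamma(y)$ for $\gamma'\le\gamma$ keep the distance in \eqref{eq:maingenerique} from decreasing — so one may restrict to rational quadruples $(r,r',t,\gamma)$ below $\rho_{i,k}$, with a gap $1/n$.

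For fixed data $\mathbf d=(i,k,r,r',t,\gamma,n)$, let $R_{\mathbf d}$ be the open set of $f$ to which Theorem~\ref{Thm-PerturbationResult} applies with region $U_i$ at scale $\rho_{i,k}$ (so in particular $\Lambda_i(f)$ is partially hyperbolic; one takes the $U_i$, for instance trapping regions, so that this is an open condition and $\Lambda_i(f)$ is $\cW^u$-saturated as required), and let $\cG_{\mathbf d}$ be the union of $\Diff^1(M)\setminus\overline{R_{\mathbf d}}$, where nothing is asked, with the set of $f\in R_{\mathbf d}$ such that every $x,y\in\Lambda_i(f)$ with $y\in\cW^s(x)$ and $d_s(x,y)\in[r,r']$ admit $x'\in\cW^u_t(x)$ with $d(\cW^s_\gamma(x'),\cW^u_\gamma(y))>1/n$; the closed window and strict inequality are taken so that the complementary ``bad'' condition is closed. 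The first piece is open, the second is open because by upper semicontinuity of $\Lambda_i(\cdot)$ and continuity of the local laminations a sequence of bad $g_k\to f$ would yield, after extraction, limit points $x,y,x'$ for $f$ violating the good condition, and $\cG_{\mathbf d}$ is dense because given $f$ either it already lies in the open first piece or it is approximated by some $f'\in R_{\mathbf d}$ to which Theorem~\ref{Thm-PerturbationResult} then applies (legitimately, as $(r,r',t,\gamma)$ is below the scale), landing arbitrarily close in the open second piece. Then $\cG:=\bigcap_{\mathbf d}\cG_{\mathbf d}$ is $G_\delta$ and dense by Baire, and unwinding the two reductions yields the theorem.

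The real difficulty is all contained in Theorem~\ref{Thm-PerturbationResult}: the \emph{global} $C^1$-perturbation breaking the joint integrability of $\cW^s$ and $\cW^u$ \emph{uniformly}, with a gap bounded below over \emph{every} stable segment of length in $(r,r')$. I would attack it by realizing a shear in the centre direction along the strong stable holonomy, produced by a coordinated family of $C^1$-small bumps supported near a transversal to the unstable lamination, the crucial point being that these local shears must not cancel along any such segment — which is why a purely local construction (sufficient for one pair, or a dense set of pairs, of unstable leaves) does not suffice here. Within the deduction above the only genuinely delicate point is the first reduction: choosing the countable family $\{U_i\}$ so that every partially hyperbolic $\cW^u$-saturated set is absorbed into some $\Lambda_i(f)$ meeting the hypotheses of Theorem~\ref{Thm-PerturbationResult} while keeping $R_{\mathbf d}$ open; the continuity of local strong stable and unstable manifolds under $C^1$-perturbations and the endpoint bookkeeping in the parameter ranges are routine.
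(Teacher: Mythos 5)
Your skeleton (countable family of compact regions plus a countable dense set of parameters, openness from continuity of the local laminations and semicontinuity of maximal invariant sets, density from Theorem~\ref{Thm-PerturbationResult}) is the same Baire strategy as the paper's Section~\ref{s.teogenerique}, but there is a genuine gap in the bookkeeping: you bake the quantitative gap $1/n$ into the definition of the sets $\cG_{\mathbf d}$. Theorem~\ref{Thm-PerturbationResult} only produces a perturbation $g$ with $\cW^s_{g,\gamma}(x')\cap\cW^u_{g,\gamma}(y)=\emptyset$; compactness then yields \emph{some} positive gap $\delta_g$, but $\delta_g$ depends on $g$ and nothing forces $\delta_g>1/n$ for the $n$ fixed beforehand, so your density claim for fixed $n$ is unjustified. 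Worse, since you intersect over all $\mathbf d$, including all $n$, membership in $\cG$ requires the gap condition for $n=1$ whenever $f\in R_{\mathbf d}$; but $d(\cW^s_\gamma(x'),\cW^u_\gamma(y))\le d(x',y)\le t+r'<1$, so for any $f$ whose $\Lambda_i(f)$ robustly contains stable segments with $d_s\in[r,r']$ (say a globally partially hyperbolic $f$ with $U_i=M$) a whole $C^1$-neighbourhood is excluded from the $n=1$ set: that set is not dense, and your $\cG$ misses precisely the diffeomorphisms the theorem is about. The paper avoids this by keeping the defining property \emph{qualitative}: $\cA^2_{k,\tau}$ is the $C^1$-interior of the set of $f$ with the empty-intersection property (this set is open by Lemma~\ref{Lemma-RobustProperty}), the automatically open and dense sets are $\cA^2_{k,\tau}\cup(\overline{\cA^2_{k,\tau}})^c$, Theorem~\ref{Thm-PerturbationResult} shows a generic $f$ carrying such a $\Lambda$ lies in $\overline{\cA^2_{k,\tau}}$ and hence in $\cA^2_{k,\tau}$, and only then is $\delta>0$ extracted a posteriori, for that fixed $f$, by the compactness argument of Lemma~\ref{Lemma-RobustProperty}; the constant $\delta$ is never prescribed in advance.

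A secondary problem is your first reduction: taking the $U_i$ to be trapping regions so that $\Lambda_i(f)$ is $\cW^u$-saturated does not cover the generality required. A compact $\cW^u$-saturated partially hyperbolic set (for instance a minimal $\cW^u$-saturated subset, which is what Theorem~\ref{Teo-FinitelyManyMinimal} needs) need not admit trapping neighbourhoods, and the maximal invariant set in a small neighbourhood of $\Lambda$ is in general \emph{not} $\cW^u$-saturated. The paper's formulation sidesteps this by inserting the hypothesis $\cW^u_t(x)\en\Lambda_{k,f}$ into the defining property, exactly as in Theorem~\ref{Thm-PerturbationResult} and Lemma~\ref{Lemma-RobustProperty}: the saturation of the given $\Lambda\en\Lambda_{k,f}$ then guarantees this hypothesis for the pairs one cares about, with no attracting structure assumed. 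With these two corrections (qualitative open sets plus a posteriori $\delta$, and the relative condition $\cW^u_t(x)\en\Lambda_{k,f}$ instead of trapping regions) your argument becomes essentially the paper's proof.
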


\begin{obs}\label{Remark-OpenGivenScale}
Using continuity of the strong stable and unstable manifolds with respect to the points and the diffeomorphisms one obtains that for every $r,t>0$, (modulo changing slightly the constant $\delta$) the same property holds for $g$ in a $C^1$-small neighborhood of $f$ which depends only on $r$ and $t$. See Lemma \ref{Lemma-RobustProperty} below.\end{obs}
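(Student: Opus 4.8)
To prove the robustness claimed in Remark~\ref{Remark-OpenGivenScale}, the plan is to deduce it from Theorem~\ref{Theorem-MainGenerique} itself, applied to $f_0:=f$ and to a \emph{Hausdorff limit} of the perturbed invariant sets, through a compactness and contradiction argument; the only analytic ingredient is the continuity of the local strong stable and unstable manifolds with respect to the point and the diffeomorphism, as the remark announces. Fix $f_0\in\cG$, the scales $r,r',t,\gamma$ and the constant $\delta$ furnished by the theorem, a compact neighborhood $V$ of $\Lambda$, and an extension to a neighborhood of $V$ of a cone field witnessing the partial hyperbolicity of $\Lambda$; by openness of the cone conditions there is a $C^1$-neighborhood $\cV_0$ of $f_0$ such that, for every $g\in\cV_0$, this cone field witnesses with \emph{uniform} constants the partial hyperbolicity of every $g$-invariant compact subset of $V$. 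The goal is to produce $\delta'>0$ and $\cV\subseteq\cV_0$ so that for every $g\in\cV$ (not necessarily in $\cG$), every $g$-invariant $\cW^u_g$-saturated compact set $\Lambda_g\subseteq V$, and every $x,y\in\Lambda_g$ with $y\in\cW^s_g(x)$ and $d^g_s(x,y)\in(r,r')$, there is $x'\in\cW^u_{g,t}(x)$ with $d(\cW^s_{g,\gamma}(x'),\cW^u_{g,\gamma}(y))\geq\delta'$; this is exactly the assertion of the remark, the admissible sets $\Lambda_g$ being, in the applications, the quasi-attractors of $g$ lying in the partially hyperbolic region.

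Suppose the goal fails. Then there are $g_n\to f_0$ in $\Diff^1(M)$, sets $\Lambda_n\subseteq V$ as above for $g_n$, and $x_n,y_n\in\Lambda_n$ with $y_n\in\cW^s_{g_n}(x_n)$, $d^{g_n}_s(x_n,y_n)\in(r,r')$, such that $d(\cW^s_{g_n,\gamma}(x'),\cW^u_{g_n,\gamma}(y_n))<1/n$ for \emph{every} $x'\in\cW^u_{g_n,t}(x_n)$. Passing to a subsequence I may assume $x_n\to x_\infty$, $y_n\to y_\infty$, and $\Lambda_n\to\Lambda_\infty$ in the Hausdorff topology. Then $\Lambda_\infty\subseteq V$ is compact and $f_0$-invariant, it is partially hyperbolic because the uniform cone field passes to the limit, and it is $\cW^u_{f_0}$-saturated: any point of $\cW^u_{f_0}(p)$ with $p\in\Lambda_\infty$ is, using $f_0$-invariance together with the continuity of the local strong unstable plaques with respect to the point and the diffeomorphism, a limit of points lying on $\cW^u_{g_n}(p_n)\subseteq\Lambda_n$ with $p_n\to p$. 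Moreover $x_\infty,y_\infty\in\Lambda_\infty$, and, by continuity of the local strong \emph{stable} manifolds (here one uses that $r'$ is below the uniform plaque size), $y_\infty\in\cW^s_{f_0}(x_\infty)$ and $d^{f_0}_s(x_\infty,y_\infty)=\lim_n d^{g_n}_s(x_n,y_n)\in[r,r']$.

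Now apply Theorem~\ref{Theorem-MainGenerique} to $f_0\in\cG$ and to $\Lambda_\infty$, with the slightly enlarged stable window $(r-\eps_0,r'+\eps_0)$ and the smaller scale $t/2$, where $\eps_0>0$ is chosen with $r-\eps_0>0$ and small enough that these scales remain in the range of validity of the theorem; this is legitimate because $d^{f_0}_s(x_\infty,y_\infty)\in[r,r']\subseteq(r-\eps_0,r'+\eps_0)$. It yields some $\delta_\infty>0$ and a point $x'_\infty\in\cW^u_{f_0,t/2}(x_\infty)\subseteq\interior\,\cW^u_{f_0,t}(x_\infty)$ with $d(\cW^s_{f_0,\gamma}(x'_\infty),\cW^u_{f_0,\gamma}(y_\infty))\geq\delta_\infty$; note $x'_\infty\in\Lambda_\infty$ by $\cW^u$-saturation. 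Since the local disks $\cW^u_{g_n,t}(x_n)$ converge in the $C^1$ topology to $\cW^u_{f_0,t}(x_\infty)$ (being finite saturations of local strong unstable plaques, which themselves converge) and $x'_\infty$ lies in the \emph{interior} of the limit disk, there are $x'_n\in\cW^u_{g_n,t}(x_n)$ with $x'_n\to x'_\infty$, and $x'_n\in\Lambda_n$ by $\cW^u_{g_n}$-saturation. Continuity of the local strong manifolds with respect to the point and the diffeomorphism then gives $\cW^s_{g_n,\gamma}(x'_n)\to\cW^s_{f_0,\gamma}(x'_\infty)$ and $\cW^u_{g_n,\gamma}(y_n)\to\cW^u_{f_0,\gamma}(y_\infty)$ in the Hausdorff topology, so that
\[
d\big(\cW^s_{g_n,\gamma}(x'_n),\cW^u_{g_n,\gamma}(y_n)\big)\;\longrightarrow\;d\big(\cW^s_{f_0,\gamma}(x'_\infty),\cW^u_{f_0,\gamma}(y_\infty)\big)\;\geq\;\delta_\infty>0 .
\]
For $n$ large this quantity is $\geq\delta_\infty/2>1/n$, contradicting the choice of $x_n,y_n$ applied with $x'=x'_n$. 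This produces $\cV$ and some $\delta'>0$; one may in fact take $\delta'=\delta/2$ with $\delta$ uniform, by first replacing $\Lambda$ with the closure of the union of all compact $f_0$-invariant $\cW^u_{f_0}$-saturated subsets of $V$ (still compact, $f_0$-invariant, $\cW^u_{f_0}$-saturated and partially hyperbolic) and applying Theorem~\ref{Theorem-MainGenerique} to it. The neighborhood $\cV$ so obtained depends only on $f_0$ and the scales, and in fact only on $r$ and $t$ once the largest relevant values of $r'$ and $\gamma$ are fixed, because shrinking either $r'$ or $\gamma$ only makes both the hypothesis and the conclusion easier to fulfil.

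The main obstacle is conceptual rather than computational: a partially hyperbolic set need not be structurally stable, so one cannot simply identify points of a perturbed invariant set $\Lambda_g$ with points of $\Lambda$ sitting on the correct strong leaves, and a direct continuity argument stalls at precisely that point. Passing to the Hausdorff limit $\Lambda_\infty$ of the perturbed sets is what repairs this: in the limit all the incidence relations the statement needs -- $y_\infty\in\cW^s_{f_0}(x_\infty)$, the $\cW^u$-saturation of $\Lambda_\infty$, and $x'_\infty\in\Lambda_\infty$ -- close up simultaneously, so that Theorem~\ref{Theorem-MainGenerique} becomes applicable. The remaining, purely bookkeeping, difficulties are to enlarge the window $(r,r')$ and shrink $t$ to $t/2$ so that the strict inequality on $d_s$ and the interiority of $x'$ survive the limiting process, and to check that $\cW^u$-saturation and partial hyperbolicity (with uniform constants) are preserved under Hausdorff limits of invariant sets -- both of which follow from the continuity of the local strong plaques.
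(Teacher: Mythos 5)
Your argument is correct and follows essentially the same route as the paper: the remark is justified there by the compactness-and-continuity argument of Lemma \ref{Lemma-RobustProperty}, which is exactly the contradiction scheme you run (extract convergent subsequences, use the continuity of the local strong stable and unstable plaques in the point and the diffeomorphism, and play the conclusion of Theorem \ref{Theorem-MainGenerique} for the limit configuration against the assumed failure along the sequence). Your additional care in checking that the Hausdorff limit $\Lambda_\infty$ of the perturbed $\cW^u$-saturated sets is again $\cW^u$-saturated, and in reapplying Theorem \ref{Theorem-MainGenerique} to $\Lambda_\infty$ with the slightly enlarged window $(r-\eps_0,r'+\eps_0)$ and the radius $t/2$ so that the extremal cases survive the limit, only fills in details the paper leaves implicit.
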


\begin{figure}[ht]
\begin{center}
\includegraphics[scale=0.4]{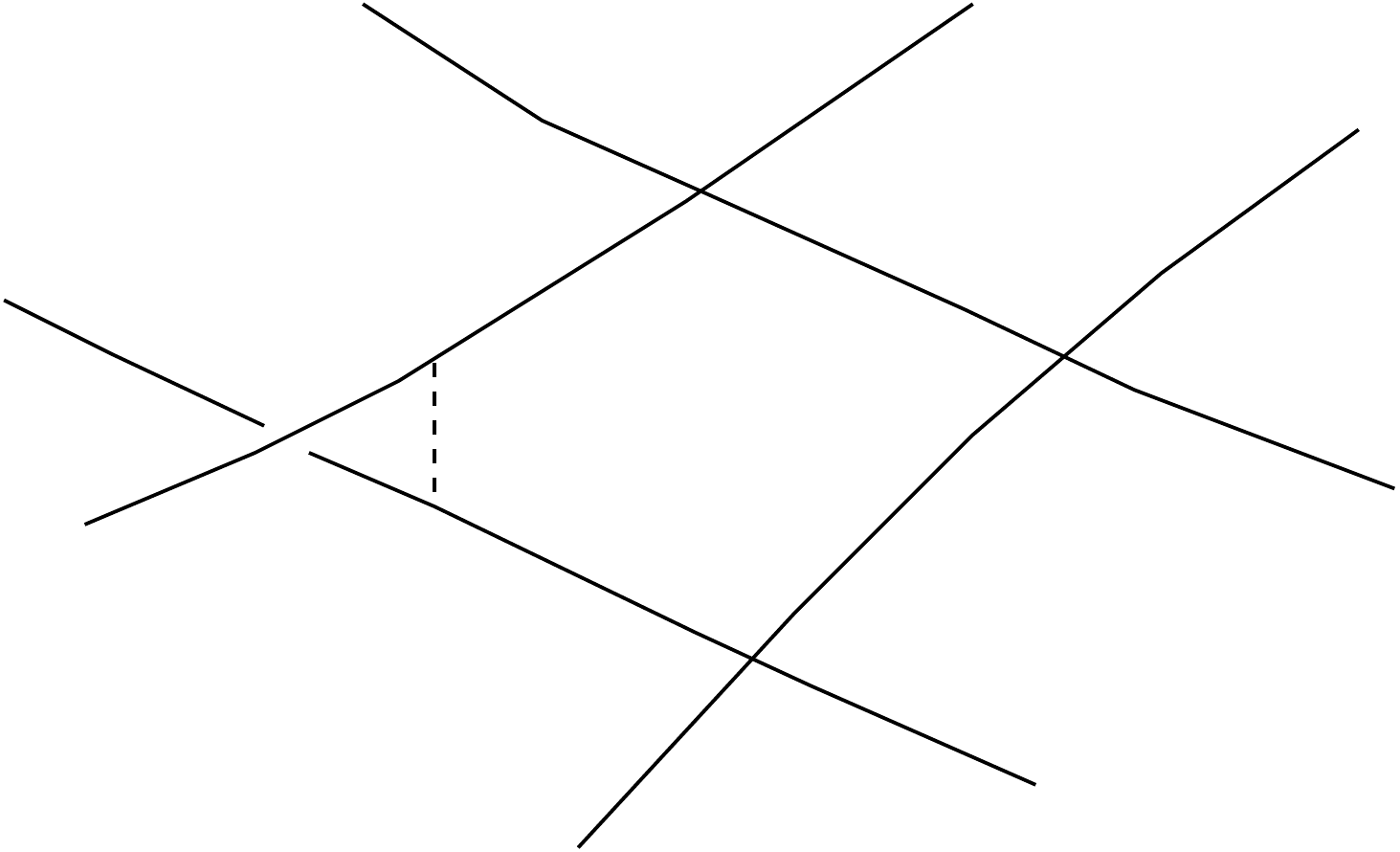}
\begin{picture}(0,0)
\put(-130,26){$x'$}
\put(-140,120){$y$}
\put(-200,5){\small$\cW^{u}(x)$}
\put(-280,50){\small$\cW^{u}(y)$}
\put(-60,99){$x$}
\put(-238,160){$\cW^{s}(x)$}
\put(-290,120){$\cW^{s}(x')$}
\end{picture}
\end{center}
\vspace{-0.5cm}
\caption{The non-joint integrability.\label{l.NJI}}
\end{figure}

Theorem \ref{Theorem-MainGenerique} works without hypothesis on the dimension of $E^c$. It can be compared to Dolgopyat-Wilkinson's Theorem (\cite{DW}).  Even if one assumes global partial hyperbolicity the result is more general as we need to control non-joint integrability in given $\cW^u$-saturated subsets. Moreover, it gives a quantitative form of non-joint integrability.

The result is a consequence of the following perturbation result and a standard Baire argument.

\begin{teo}\label{Thm-PerturbationResult}
Let $f\colon M\to M$ be a $C^1$-diffeomorphism and $A\subset M$ be compact set such that the maximal invariant set $\Lambda_f$ in $A$ admits a partially hyperbolic splitting of the form $T_{\Lambda_f} M = E^s\oplus E^c \oplus E^u$.

\medskip

Given $\cV$ a $C^1$-neighborhood of $f$ and sufficiently small values of $r,r',t,\gamma>0$, there exists $g \in \cV$ such that the maximal invariant set $\Lambda_g$ in $A$ for $g$ admits a partially hyperbolic splitting into bundles with the same dimensions as the splitting on $\Lambda_f$ and if $x,y \in \Lambda_g$ verify that:
\begin{itemize}
\item $y \in \cW^s_g(x)$ and $d_s(x,y) \in [r, r']$,
\item $\cW^u_{g,t}(x) \en \Lambda_g$,
\end{itemize}
\noindent then there exists $x' \in \cW^u_{g,t}(x)$ such that:
  \begin{equation}\label{eq:perturbationresult} \cW^s_{g,\gamma}(x') \cap \cW^u_{g,\gamma}(y) = \emptyset \, .
  \end{equation}
\end{teo}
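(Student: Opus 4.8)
The plan is to construct $g$ by a single \emph{global} $C^1$-small perturbation of $f$ that destroys joint integrability of $\cW^s_g$ and $\cW^u_g$ along every short stable segment meeting a $t$-plaque of $\Lambda_g$, and then to read off~\eqref{eq:perturbationresult}.

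\emph{Reductions.} First I would shrink $\cV$ so that for every $g\in\cV$ the maximal invariant set $\Lambda_g$ in $A$ is partially hyperbolic, with the same bundle dimensions and with uniform hyperbolicity and domination constants, and fix local product coordinates of uniform size along $A$; this is automatic since partial hyperbolicity of the maximal invariant set in a compact set is $C^1$-robust. Using the uniform contraction of $\cW^s_g$ and expansion of $\cW^u_g$, I would replace a candidate pair $(x,y)$ by its iterate $(g^n x,g^n y)$: after $n$ steps $g^n y$ is $\cW^s_g$-close to $g^n x$ while $g^n(\cW^u_{g,t}(x))\subset\Lambda_g$ is a \emph{large} unstable plaque through a point near $g^n y$. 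Since shrinking $\gamma$ only makes~\eqref{eq:perturbationresult} easier, an elementary computation with the contraction/expansion rates shows that it is enough to prove, at a single fixed scale $\gamma_0$ depending only on the uniform constants: for every large unstable plaque $D\subset\Lambda_g$ and every $\bar y\in\Lambda_g$ on $\cW^s_g$ of some point of $D$ at distance $\le\gamma_0$, there is $x''\in D$ with $\cW^s_{g,\gamma_0}(x'')\cap\cW^u_{g,\gamma_0}(\bar y)=\emptyset$, i.e. $\cW^u_g(\bar y)$ is not contained in the $\cW^s_g$-saturation of $D$. Then~\eqref{eq:perturbationresult} for the original scales follows by applying $g^{-n}$, which contracts $\cW^u_g$ and expands $\cW^s_g$, so that $x'=g^{-n}(x'')\in\cW^u_{g,t}(x)$ does the job.

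\emph{The perturbation.} For a single configuration the required effect is produced by the classical \emph{center shear}: in product coordinates along a small region met by $D$, inserted into a tower of disjoint forward iterates, one composes $f$ with a $C^1$-small map of the form $(w^s,w^c,w^u)\mapsto(w^s,\,w^c+\theta(w^u),\,w^u)$ with $\theta$ small, $\theta(0)=0$ and $D\theta(0)\neq0$. This tilts $E^u_g$ into the center direction, hence forces the unstable leaves crossing the region to leave the $\cW^s_g$-saturation of $D$; the region being small and its iterates disjoint, partial hyperbolicity (with the same dimensions) is preserved and $g$ stays in $\cV$.

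\emph{Main obstacle.} The genuinely new difficulty, which I expect to be the technical core, is that this must hold \emph{simultaneously for all configurations}. Because strong unstable leaves are non-local, a perturbation ``localized'' near one configuration in fact changes the laminations of \emph{every} configuration, so the individual shears cannot simply be superposed; one must perform them in a globally coherent way --- for instance as a single small shear of $\cW^u_g$ into the center direction spread over a neighborhood of $A$ --- rule out cancellations between the local contributions, and verify that the resulting integrability defect along \emph{every} admissible stable segment is nonzero, all while keeping partial hyperbolicity (with unchanged dimensions) and respecting the $C^1$-budget throughout the global construction. Once this is achieved for $g$, continuity of the strong laminations together with compactness of the configuration space yields the property for diffeomorphisms near $g$ and for nearby scales, with a slightly smaller constant --- this is Remark~\ref{Remark-OpenGivenScale} / Lemma~\ref{Lemma-RobustProperty}, which the Baire argument for Theorem~\ref{Theorem-MainGenerique} will invoke.
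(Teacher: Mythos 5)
Your proposal reproduces the paper's two easy ingredients --- the local mechanism (a center shear $(w^s,w^c,w^u)\mapsto(w^s,w^c+\theta(w^u),w^u)$ supported in a small region whose first $N$ iterates are disjoint; this is the paper's elementary perturbation, Lemma \ref{Lema-ElementaryPerturbation}) and the forward-iteration step that brings any admissible pair $(x,y)$ to a fixed scale (Section \ref{ss:check}) --- but it stops exactly where the actual proof begins. The paragraph ``Main obstacle'' is a statement of the problem, not an argument: you say one must perform the shears ``in a globally coherent way'', ``rule out cancellations'' and ``verify that the integrability defect along every admissible stable segment is nonzero'', without giving a mechanism that achieves this. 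The paper's solution is a concrete combinatorial construction: a $\xi$-covering of bounded geometry (Lemma \ref{Lema-BoundedGeometry}), wandering slices disjoint from their first $N$ iterates (Proposition \ref{Prop-WanderingRegions}), and above all a sparse section by tiles (Proposition \ref{Prop-Colorings}, Corollary \ref{Cor-TwoDisksAndRcoloring}, Proposition \ref{prop-choicerect}) which guarantees that whenever two $\alpha$-Lipschitz unstable disks are joined by a stable segment whose length lies in the fixed window $[2^{s+u}\rho\eta,\,2^{s+u}\rho\eta\Delta]$, one of them crosses a half-rectangle $\tfrac12 R_{i,j,k}$ where the shear acts while the other misses \emph{every} rectangle of the same sub-slice $T_{i,j}$; together with the cone control of Propositions \ref{Prop-NarrowingCones}--\ref{Prop-NarrowingCones2} this produces a definite center offset (Corollary \ref{cor-nojointint}) that the $\alpha$-Lipschitz stable plaques cannot bridge. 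Your suggested alternative --- ``a single small shear of $\cW^u_g$ spread over a neighborhood of $A$'' --- does not fill this gap: a shear acting on a whole neighborhood displaces both leaves of a configuration comparably, so nothing forbids the defect from cancelling, and a support that is not wandering destroys the $\theta$-Lipschitz graph control of the perturbed unstable leaves on which the quantitative estimate rests.

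There is also a flaw in your reduction to the single scale $\gamma_0$. As stated (``there is $x''\in D$ with $\cW^s_{g,\gamma_0}(x'')\cap\cW^u_{g,\gamma_0}(\bar y)=\emptyset$''), the property is trivially satisfiable by taking $x''$ far from $\bar y$ inside the large plaque $D$, and it does not pull back: if $z\in\cW^s_{g,\gamma}(x')\cap\cW^u_{g,\gamma}(y)$ at time $0$, then $g^n(z)$ lies in $\cW^s_{g,\gamma_0}(x'')$ but its unstable distance to $\bar y=g^n(y)$ may exceed $\gamma_0$, since forward iteration expands $\cW^u$; so emptiness against the small plaque $\cW^u_{g,\gamma_0}(\bar y)$ gives no contradiction. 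The paper avoids this by controlling the putative intersection point along the entire orbit segment via the local product structure (Proposition \ref{Prop-LocalProductStructure}) and by proving non-intersection of the specific connected components $\cD^s(g^n(x'))$ and $\cD^u(g^n(y))$ inside one $\rho$-cube, with $g^n(x')$ at controlled distance from $g^n(y)$. Both points --- the global placement of the shears and the quantitative pull-back --- would have to be supplied before your outline becomes a proof.
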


\begin{obs}\label{Remark-RegularityVolumePreserving}
The perturbation is made by composing with volume preserving $C^\infty$ diffeomorphisms in certain regions, so, the
perturbation can be made to preserve regularity and volume.

However, the perturbation we make is not enough to preserve a symplectic form (compare with \cite{DW}) since we need to preserve
certain directions which cannot be Lagrangian (see Lemma \ref{Lema-ElementaryPerturbation}).
\end{obs}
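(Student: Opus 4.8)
The plan is to extract both assertions from the explicit form of the perturbation built in the proof of Theorem~\ref{Thm-PerturbationResult}. Recall that there $g$ is produced from $f$ by composing with finitely many \emph{elementary perturbations} $\theta_1,\dots,\theta_N$ furnished by Lemma~\ref{Lema-ElementaryPerturbation}, each supported in a small ball and $C^1$-close to the identity, say $g=\theta_N\circ\cdots\circ\theta_1\circ f$ (in suitable order). The first thing to observe is that every $\theta_i$ can be chosen in a $C^\infty$ shear normal form: in coordinates adapted to the splitting $E^s\oplus E^c\oplus E^u$ it modifies only the center coordinate, $(x^s,x^c,x^u)\mapsto(x^s,\,x^c+\psi_i(x^s,x^u),\,x^u)$, with $\psi_i$ smooth, of small support and small derivatives (glued to the identity by a cut-off). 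Such a map is $C^\infty$ and has unipotent --- hence unit --- Jacobian, so it preserves Lebesgue measure; when $M$ carries a prescribed smooth volume $\mu$ one first changes the chart so that $\mu$ becomes Lebesgue inside the ball (Moser's argument), after which $\theta_i$ preserves $\mu$. Since the argument of Section~\ref{Section-Perturbations} only uses the $C^1$-size and the support of the $\theta_i$, such a shear is still an admissible elementary perturbation.

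Granting this, the first assertion is formal: a composition of a $C^r$ diffeomorphism with finitely many $C^\infty$ diffeomorphisms is again $C^r$, and a composition of a $\mu$-preserving diffeomorphism with $\mu$-preserving ones is $\mu$-preserving; the partial hyperbolicity of $\Lambda_g$ and the separation~\eqref{eq:perturbationresult} depend only on the $C^1$-sizes and the supports of the $\theta_i$, hence are unaffected by this refinement. One concludes that $g$ may be taken in the same $C^r$ (resp.\ volume-preserving) class as $f$.

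For the second assertion I would argue against a hypothetical $f$-invariant symplectic form $\omega$. The reason the shear normal form is used is precisely that it does not alter the indices of the partially hyperbolic splitting: $D\theta_i$ is the identity along the center direction $E^c$, it keeps the center--stable and center--unstable bundles $E^s\oplus E^c$ and $E^c\oplus E^u$ invariant, and it only tilts $E^s,E^u$ inside their cones. I would then appeal to the computation carried out in Lemma~\ref{Lema-ElementaryPerturbation}: since $E^s$ and $E^u$ are forced to be $\omega$-isotropic and the center is non-trivial, the directions the perturbation must preserve cannot be $\omega$-Lagrangian for the splittings in play; as a result the pull-back of $\omega$ by such a shear acquires cross terms pairing the center coordinates with the stable and unstable ones, and nothing in $\omega$ can cancel them unless $\psi_i$ is locally constant. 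Hence a nontrivial elementary perturbation cannot preserve $\omega$, and so the perturbation of Theorem~\ref{Thm-PerturbationResult} cannot be carried out symplectically. This is the contrast with Dolgopyat--Wilkinson~\cite{DW}: breaking joint integrability along one pair of unstable leaves --- or a dense family of pairs --- can be done within the Hamiltonian class, whereas the \emph{global} requirement here, namely separating every pair of unstable leaves crossing a common stable manifold, forces the rigid shear normal form, which is incompatible with $\omega$.

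The only genuinely delicate point, as I see it, is this last one: isolating, inside the construction of Section~\ref{Section-Perturbations}, exactly which directions the elementary perturbation is constrained to preserve, and checking that their non-Lagrangian nature is what obstructs a symplectic realization --- this is precisely the content one wants packaged in Lemma~\ref{Lema-ElementaryPerturbation}. The regularity and volume statements, by contrast, are soft: they reduce to the $C^\infty$ unit-Jacobian shear normal form together with formal properties of composition.
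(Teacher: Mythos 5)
Your conclusion is right, but the route you take introduces an unnecessary detour and rests on one incorrect justification. The paper does not need (and does not use) a shear normal form: the elementary perturbation of Lemma~\ref{Lema-ElementaryPerturbation} is already, by construction, the time-one map of a Hamiltonian flow in a single coordinate $2$-plane (one center coordinate $x_i$ and one unstable coordinate $x_j$), extended by the identity on all remaining coordinates via a bump function. Its Jacobian matrix is block-triangular with the identity on the $(x^s,\hat x^c,\hat x^u)$-block and an area-preserving map on the $(x_i,x_j)$-block, hence has determinant $1$; it is $C^\infty$ by construction. So the first assertion of the remark is read off directly from the construction together with the disjointness of the supports $f^{-1}(R_{i,j,k})$ (plus a Moser-type adjustment for a non-canonical volume form, as the paper indicates). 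Your replacement by the shear $(x^s,x^c,x^u)\mapsto(x^s,\,x^c+\psi(x^s,x^u),\,x^u)$ could be made to work, but your stated reason that it is admissible --- ``the argument of Section~\ref{Section-Perturbations} only uses the $C^1$-size and the support of the $\theta_i$'' --- is false. The entire mechanism of Corollary~\ref{cor-nojointint} rests on property~(i) of Lemma~\ref{Lema-ElementaryPerturbation}: the image of \emph{every} central unstable fiber $\{x^s\}\times\{x^c\}\times[0,\eta]^u$ through the middle of the rectangle must contain two points whose center coordinates differ by at least $\kappa\eta/10$, with unstable coordinates remaining in the middle. Any substitute for the elementary perturbation must be checked against this quantitative displacement property (your $\psi$ would have to oscillate by $\kappa\eta/10$ in the $x^u$-variable over $[\tfrac14\eta,\tfrac34\eta]^u$ while keeping $C^1$-size $\kappa$ and vanishing near the boundary --- possible, but it has to be said).

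On the symplectic assertion your discussion slightly misplaces the obstruction. It is not that the construction abandons the Hamiltonian class --- locally it \emph{is} a Hamiltonian flow in the $(x_i,x_j)$-plane. The obstruction the paper points to is property~(ii) of Lemma~\ref{Lema-ElementaryPerturbation}: the perturbation must leave the stable coordinates untouched (equivalently, preserve the foliation by planes parallel to $E^c\oplus E^u$ and keep the cones around $E^s$ and $E^{cu}$ invariant), while displacing a center direction against an unstable one. In the symplectic partially hyperbolic setting these constrained subspaces ($E^s$ isotropic, $E^c\oplus E^u$ coisotropic of dimension $>n$) cannot be Lagrangian, which is what blocks a symplectic realization; your ``cross terms in the pull-back of $\omega$'' computation gestures at this but is neither carried out nor tied to the actual constraints of the Lemma. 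Since the remark itself is informal this is a soft criticism, but the contrast with \cite{DW} you draw (global vs.\ local breaking of joint integrability forcing a ``rigid shear'') is not the reason given in the paper.
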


The applications we will explore in this paper all assume that
$\dim E^c = 1$. In a certain sense, this result implies that, for
$C^1$-generic diffeomorphisms, a minimal $\cW^u$-saturated set
occupies a ``big space'' in the manifold.

We say that a set $\Gamma$ is a \emph{minimal}
$\cW^u$-\emph{saturated} set if it is minimal for the inclusion among the compact,
$f$-invariant, $\cW^u$-saturated non-empty (partially hyperbolic) sets. Standard arguments imply that every compact $\cW^u$-saturated
partially hyperbolic set $\Lambda$ contains at least one minimal
$\cW^u$-saturated set. Our main results follow from:

\begin{teo}\label{Teo-FinitelyManyMinimal}
There exists a dense G$_\delta$ subset $\cG \en \Diff^1(M)$ such
that for any $f_0 \in \cG$ and any compact $f_0$-invariant partially
hyperbolic set $K$ with $\dim E^c=1$, then
there are neighborhoods $\cU$ of $f_0$ and $U_K$ of $K$
with the following property.

For any $f\in \cU$ and any $f$-invariant compact (partially hyperbolic) set $\Lambda\subset U_K$
which is $\cW^u$-saturated then, $\Lambda$ contains at most finitely many minimal $\cW^u$-saturated
subsets.
\end{teo}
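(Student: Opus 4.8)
The plan is to argue by contradiction using the quantitative non-joint integrability of Theorem \ref{Theorem-MainGenerique} together with a volume (or transverse measure) counting argument enabled by the one-dimensionality of the center. First I would fix $f_0$ in the $G_\delta$-dense set $\cG$ provided by Theorem \ref{Theorem-MainGenerique}, and choose the scales $r,r',t,\gamma>0$ small enough (with $r<r'$) so that the conclusion \eqref{eq:maingenerique} holds with some $\delta>0$; by Remark \ref{Remark-OpenGivenScale} (i.e. Lemma \ref{Lemma-RobustProperty}) this persists, after shrinking $\delta$, for all $g$ in a $C^1$-neighborhood $\cU$ of $f_0$ depending only on $r$ and $t$. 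One also uses continuity of the partially hyperbolic splitting and of the strong laminations to get a neighborhood $U_K$ of $K$ such that any $f$-invariant compact $\Lambda\subset U_K$ with $f\in\cU$ is partially hyperbolic with the same index and $\dim E^c=1$, and its strong stable/unstable plaques of size $r',t,\gamma$ are well-defined and vary continuously.

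Next, suppose for contradiction that some such $\Lambda$ contains infinitely many minimal $\cW^u$-saturated subsets $\Gamma_1,\Gamma_2,\dots$. Distinct minimal $\cW^u$-saturated sets are disjoint, so I would pick a point $x_i\in\Gamma_i$ in each, and after passing to a subsequence assume $x_i\to x_\infty\in\Lambda$. The key geometric input is that each $\Gamma_i$, being $\cW^u$-saturated, contains the whole unstable plaque $\cW^u_{g,t}(x_i)$; moreover, because $\Gamma_i$ is $\cW^u$-saturated and $f$-invariant, it behaves well under the construction and we may apply non-joint integrability. The idea is then to build, inside a small transversal $\Sigma$ to $E^u$ through $x_\infty$, a sequence of pieces of strong stable manifolds $\cW^s_{g,\gamma}(\cdot)$ coming from the different $\Gamma_i$ — using the pairs $(x,y)$ with $y\in\cW^s(x)$ at scale in $[r,r']$ — that by \eqref{eq:maingenerique} must stay pairwise $\delta$-separated in $M$ once one replaces $x$ by the appropriate $x'\in\cW^u_t(x)$. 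Since $\Lambda$ lies in the compact set $\overline{U_K}$, only finitely many $\delta$-separated pieces of uniformly sized stable discs can fit, and one-dimensionality of the center is what makes these stable discs genuinely codimension-one objects that cannot accumulate on each other without violating the separation — this yields the contradiction and hence the finiteness.

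The main obstacle I expect is organizing the separation argument so that the $\delta$-gaps coming from Theorem \ref{Theorem-MainGenerique} genuinely translate into a bound on the \emph{number} of minimal sets rather than merely on the number of well-separated points: one must (i) make sure that for each $\Gamma_i$ one can actually find a pair $x,y\in\Gamma_i$ with $y\in\cW^s(x)$ and $d_s(x,y)\in(r,r')$ — this requires understanding how a minimal $\cW^u$-saturated set meets strong stable manifolds, presumably via a recurrence/Poincaré-type argument or by exploiting that $\Gamma_i$ is chain-recurrent and $\cW^u$-saturated so its intersection with stable plaques cannot be too sparse — and (ii) ensure the chosen perturbed point $x'\in\cW^u_t(x)$ still lies in $\Gamma_i$ (which holds precisely because $\Gamma_i$ is $\cW^u$-saturated and $x'\in\cW^u_t(x)$), so that the separated stable discs attach to \emph{distinct} minimal sets. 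Handling (i) uniformly over all $i$, and extracting a genuinely infinite $\delta$-separated family out of it, is the technical heart; the rest is a compactness-and-counting argument. A Baire-category reduction is then used to promote the statement to hold on a single dense $G_\delta$ subset of $\Diff^1(M)$ simultaneously for all $f_0,K$.
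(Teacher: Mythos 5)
There is a genuine gap, and it sits exactly where you flagged "the technical heart". Your argument needs, for each minimal $\cW^u$-saturated set $\Gamma_i$, a pair $x,y\in\Gamma_i$ with $y\in\cW^s(x)$ and $d_s(x,y)\in(r,r')$; but such pairs need not exist: a minimal $\cW^u$-saturated set may satisfy $\cW^s(x)\cap\Gamma_i=\{x\}$ for \emph{every} $x$ (the paper calls these lower dimensional minimal sets), and no recurrence or "plaques cannot be too sparse" argument will produce a strong stable connection inside such a set. The paper handles this case by a completely different mechanism: Theorem \ref{teo:BC} places such a set inside a locally invariant submanifold tangent to $E^c\oplus E^u$, a uniform lower bound on topological entropy together with Ruelle's inequality (applied to $f^{-1}$ on that submanifold) forces a definitely negative center exponent, and Proposition \ref{prop:largemanifold} then yields a point whose stable set contains a disc of uniform radius $\eps$ tangent to $\cE^{cs}$; infinitely many such sets would then have intersecting stable sets by local product structure, contradicting Lemma \ref{l.disjstable} (distinct minimal $\cW^u$-saturated sets have disjoint stable sets). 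This whole branch is absent from your proposal.

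The other half of your plan — turning the $\delta$-gap of Theorem \ref{Theorem-MainGenerique} into a count of $\delta$-separated stable discs in a compact set — also does not go through as stated. The separation in \eqref{eq:maingenerique} is between $\cW^s_\gamma(x')$ and $\cW^u_\gamma(y)$ for $x,y$ in the \emph{same} minimal set; it gives no a priori separation between objects attached to \emph{distinct} minimal sets, so "only finitely many fit in $\overline{U_K}$" is not justified. The paper's Proposition \ref{p.bigsizebasin} instead argues by contradiction via a Hausdorff limit: infinitely many minimal sets with strong connections would accumulate on a $\cW^u$-saturated limit set carrying a strong connection; applying the property (*) to that limit and using a topological intersection-number argument (Lemma \ref{l.top}) inside a center-unstable disc — where one-dimensionality of the center makes the unstable plaque a codimension-one separating disc, with the $\delta$-gap forcing the relevant arc to cross from one side to the other — one shows the stable set of $\Lambda_n$ must meet the stable set of the limit for $n$ large, again contradicting Lemma \ref{l.disjstable}. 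So both the dichotomy (strong connection versus none) and the disjointness-of-stable-sets lemma are essential ingredients your proposal is missing, and without them neither branch of the contradiction closes.
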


\section{Proofs of Theorems A, B, C and Corollary D}\label{Section-Proofs}

We first prove Theorem C (and hence Theorem A).

\begin{proof}[\sc Proof of Theorem C]
Consider $f_0$ in the G$_\delta$ set $\cG_1:=\cG$ given by Theorem~\ref{Teo-FinitelyManyMinimal}
and $\Lambda= \bigcap_{n\in \ZZ} f_0^n(U)$ a partially hyperbolic set in a compact set $U$.
Then, for any $f$ that is $C^1$-close to $f_0$, the maximal invariant set $\Lambda_f$ in $U$
is still partially hyperbolic.
Since every quasi-attractor contains at least one minimal $\cW^u$-saturated set and different quasi-attractors are disjoint, Theorem C is a direct consequence of Theorem \ref{Teo-FinitelyManyMinimal}.
\end{proof}

To prove Theorem B we need the following result.

\begin{teo}[\cite{CSY}]\label{teo-CPSY}
Let $f$ be a $C^1$-generic diffeomorphism far from homoclinic tangencies.
Then, every chain recurrence class $C$
has a dominated splitting $T_CM=E^s\oplus E^c_1\oplus\dots\oplus E^c_k\oplus E^u$
where $E^s$, $E^u$ are uniformly expanded and contracted and $E^c_1,\dots,E^c_k$ are one-dimensional.

Moreover:
\begin{itemize}
\item If $C$ does not contain any periodic point, $E^s$ and $E^u$ are non-trivial.
\item If $C$ contains some periodic point, then for each $i=1,\dots,k$,
it contains periodic orbits whose Lyapunov exponent
along $E^c_i$ is arbitrarily close to $0$.
\end{itemize}
\end{teo}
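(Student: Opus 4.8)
The plan is to follow the proof in \cite{CSY}, where the two hypotheses on $f$ play complementary roles: $C^1$-genericity lets one describe every chain recurrence class as a Hausdorff limit of periodic orbits, while being in the $C^1$-interior of $\Diff^1(M)\setminus\overline\cT$ rules out, through the Wen--Gourmelon mechanism of generating homoclinic tangencies by $C^1$-perturbations, weak or high-dimensional behaviour of the derivative along those periodic orbits. First I would fix $f$ in a convenient dense G$_\delta$ subset of $\Diff^1(M)$ so that, by \cite{BC-rec} (see also \cite{Crov-habilitation}), chain recurrence classes depend lower-semicontinuously on the class, a class containing a periodic point $p$ coincides with the homoclinic class $H(p)$ and has its periodic orbits dense in it, and an aperiodic class is the Hausdorff limit of periodic orbits lying in arbitrarily small filtrating neighbourhoods of it.

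Next I would produce a dominated splitting on the class. Because $f$ is $C^1$-far from tangencies, Wen's theorem provides a $C^1$-neighbourhood $\cV$ of $f$ and an integer $\ell$ such that every periodic orbit $\gamma$ of every $g\in\cV$ carries an $\ell$-dominated splitting of $T_\gamma M$ refining all the indices realised by periodic orbits near $\gamma$; otherwise Gourmelon's lemma would generate a homoclinic tangency by a $C^1$-small perturbation, contradicting $g\in\cV$. Given a chain recurrence class $C$ of $f$, take periodic orbits $\gamma_n$ of $f$ with $\gamma_n\to C$ in the Hausdorff topology (inside small filtrating neighbourhoods of $C$ when $C$ is aperiodic, homoclinically related to $p$ when $C=H(p)$). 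Each $\gamma_n$ carries an $\ell$-dominated splitting, and $\ell$-domination passes to Hausdorff limits, so $T_CM$ carries an $\ell$-dominated splitting; let $T_CM=E_1\oplus\cdots\oplus E_m$ be its finest dominated splitting. Domination forces the uniformly contracted bundles to form an initial segment $E_1\oplus\cdots\oplus E_a=:E^s$ and the uniformly expanded ones a final segment $E_b\oplus\cdots\oplus E_m=:E^u$, both automatically uniformly hyperbolic; by the arguments of \cite{CSY} (Pliss lemma, Ma\~{n}\'{e}'s ergodic closing lemma, and the fact that $E^s$ and $E^u$ carry the hyperbolic periodic data of $C$) these are the maximal uniformly contracted and expanded bundles.

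It then remains to see that the intermediate bundles $E_{a+1},\dots,E_{b-1}$ are one-dimensional and to verify the two additional clauses. For the dimension, suppose a middle bundle $E_i$ has $\dim E_i\ge 2$; by finiteness of the splitting it admits no dominated subsplitting, hence neither does the continuation of $E_i$ over $\gamma_n$ for $n$ large, with domination constants degenerating; Gourmelon's lemma then creates a homoclinic tangency for a $C^1$-small perturbation of $f$, contradicting the hypothesis. Thus $T_CM=E^s\oplus E^c_1\oplus\cdots\oplus E^c_k\oplus E^u$ with $\dim E^c_j=1$. If $C$ contains no periodic point and $E^s$ were trivial, then $E^c_1$ would be one-dimensional and not uniformly contracted; applying the Pliss lemma to the $\gamma_n$ and then the connecting lemma one would find a periodic orbit inside $C$, contradicting aperiodicity, so $E^s$, and symmetrically $E^u$, are non-trivial. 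Finally, if $C=H(p)$ and $1\le j\le k$: if the $E^c_j$-exponent of every periodic orbit homoclinically related to $p$ were bounded away from $0$ with a fixed sign, then $E^c_j$ would be uniformly hyperbolic over $C$ by Ma\~{n}\'{e}'s lemma and could be merged into $E^s$ or $E^u$, contradicting their maximality; otherwise two such orbits have $E^c_j$-exponents of opposite signs, and a mixing construction along the homoclinic connections joining them (as in the production of non-hyperbolic ergodic measures; see \cite{BDV}) yields periodic orbits in $H(p)$ with $E^c_j$-exponent arbitrarily close to $0$.

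The hard part is the perturbative input underlying the second and third steps: turning the failure of $\ell$-domination on periodic orbits — weak domination, or a centre bundle of dimension at least two — into an actual homoclinic tangency for a $C^1$-small perturbation (the Wen--Gourmelon mechanism), and guaranteeing that the constant $\ell$ can be chosen uniformly on a whole $C^1$-neighbourhood of $f$. Everything else is comparatively routine: the persistence of dominated splittings under Hausdorff limits, and the standard use of the connecting and ergodic closing lemmas to transfer information from periodic orbits to the chain recurrence class.
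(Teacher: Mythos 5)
This statement is not proved in the paper: it is quoted verbatim as an external result, Theorem~\ref{teo-CPSY}, with the citation \cite{CSY}, so there is no internal proof to compare against. Your outline is a faithful summary of the strategy actually used in that reference (generic approximation of chain classes by periodic orbits from \cite{BC-rec}, Wen--Gourmelon's mechanism turning weak or non-dominated periodic data into homoclinic tangencies, passage of uniform domination to Hausdorff limits, and the Pliss/Ma\~n\'e/connecting-lemma arguments for the extremal bundles and the weak central exponents), and at the level of a sketch it is correct; for the purposes of this paper the theorem is simply a black box.
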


We emphasize that in this paper partial hyperbolicity requires the extremal bundles to be non-trivial.

\begin{obs}\label{r.CPSY}
In the second case of Theorem \ref{teo-CPSY},
the class $C$ is limit for the Hausdorff topology of hyperbolic periodic orbits
whose stable dimension is $\dim(E^s)$ and other ones with stable dimension is $\dim(E^s\oplus E^c)$.
\end{obs}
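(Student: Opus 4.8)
The plan is to derive the statement from Theorem~\ref{teo-CPSY} together with two facts, both standard in this circle of ideas. First, for $f$ in a residual subset of $\Diff^1(M)$, the chain recurrence class of any hyperbolic periodic orbit coincides with its homoclinic class (Bonatti--Crovisier); in particular a chain recurrence class containing a periodic point $p$ equals $H(p)$, and any two periodic orbits lying in the same chain recurrence class have the same homoclinic class. Second, and for \emph{every} $f$, a homoclinic class $H(q)$ is the Hausdorff limit of periodic orbits homoclinically related to $q$: pick finitely many such periodic orbits forming an $\eps$-net of $H(q)$ --- they are pairwise homoclinically related, hence joined by a cycle of transverse heteroclinic intersections --- and shadow that cycle using the inclination lemma to produce a single periodic orbit, homoclinically related to $q$ and $2\eps$-dense in $H(q)$; letting $\eps\to 0$ gives the assertion. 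I would fix $f$ in the intersection of the residual set of Theorem~\ref{teo-CPSY}, the Kupka--Smale residual set, and the residual set where the first fact holds, and let $C$ be as in the second case of Theorem~\ref{teo-CPSY}: it contains a periodic point, so it equals $H(p)$ for such a periodic orbit $p$, and carries a dominated splitting $T_CM=E^s\oplus E^c\oplus E^u$ with $E^s$ uniformly contracted, $E^u$ uniformly expanded, and $\dim E^c=1$.

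The reduction is: it suffices to show that $C$ contains a hyperbolic periodic orbit of stable dimension $\dim E^s$ and one of stable dimension $\dim E^s+1=\dim(E^s\oplus E^c)$. Indeed, if $q_0\subset C$ is a hyperbolic periodic orbit of stable dimension $d$, then $q_0$ lies in the chain recurrence class of $p$, which is $C$; so by the first fact $H(q_0)=C$, and by the second fact $C$ is the Hausdorff limit of periodic orbits homoclinically related to $q_0$, all of them hyperbolic of stable dimension $d$. Taking $d=\dim E^s$ and then $d=\dim E^s+1$ produces exactly the two sequences of hyperbolic periodic orbits in the statement.

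To produce the two orbits I would start from the second conclusion of Theorem~\ref{teo-CPSY}, which provides periodic orbits $q_n\subset C$ with center Lyapunov exponent $\lambda^c(q_n)\to 0$. As $f$ is Kupka--Smale these are hyperbolic, so $\lambda^c(q_n)\neq 0$; and since $E^s$ is contracted and $E^u$ expanded along $C$, a periodic orbit of $C$ has stable dimension $\dim E^s$ when its center exponent is positive and $\dim E^s+1$ when it is negative. If $C$ already contains periodic orbits of both these stable dimensions we are done. Otherwise, say every periodic orbit of $C$ has positive center exponent (the opposite case being symmetric); then the infimum of $\lambda^c$ over the periodic orbits of $C$ is $0$, so $E^c\oplus E^u$ is not uniformly expanded over $C$, and I would invoke the $C^1$-generic mechanism that underlies Theorem~\ref{teo-CPSY} itself --- Ma\~n\'e's ergodic closing lemma combined with the connecting lemma for pseudo-orbits, see \cite{CSY} --- to produce a hyperbolic periodic orbit inside $C$ along which $E^c\oplus E^u$ fails to be expanded; since $E^u$ is expanded, its center exponent is negative, so it has stable dimension $\dim E^s+1$, contradicting the assumption. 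Hence $C$ contains hyperbolic periodic orbits of both stable dimensions, and the proof is complete.

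The genuinely delicate step is this last one: converting the failure of uniform expansion of $E^c\oplus E^u$ over $C$ into an \emph{honest} hyperbolic periodic orbit that lies in $C$ and has negative center exponent. A naive Franks-lemma sign flip on some $q_n$ would only produce such an orbit after a perturbation of $f$, with no control over its belonging to the class, so one really needs the $C^1$-connecting lemma and the genericity of $f$ here; this is precisely the type of argument used to prove Theorem~\ref{teo-CPSY}, so for the purposes of this remark it can simply be quoted from \cite{CSY}. All the remaining ingredients --- the reduction to ``both stable dimensions occur in $C$'' and the Hausdorff-approximation of a homoclinic class by homoclinically related periodic orbits --- are soft.
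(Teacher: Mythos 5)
The paper itself offers no proof of this remark: it is stated as a known consequence of \cite{CSY} and of the standard $C^1$-generic theory of homoclinic classes, so there is no ``official'' argument to compare yours against. Your soft steps are correct and give the right reduction: generically a chain class containing a periodic point is a homoclinic class \cite{BC-rec}; a homoclinic class is the Hausdorff limit of periodic orbits homoclinically related to a given one (Birkhoff--Smale shadowing of a heteroclinic cycle through an $\varepsilon$-net of homoclinically related orbits); and all orbits homoclinically related to $q_0$ share its stable dimension. So the remark does reduce to showing that $C$ contains hyperbolic periodic orbits of stable dimension $\dim E^s$ and of stable dimension $\dim(E^s\oplus E^c)$.

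Two points in the remaining step need attention. First, you silently assume $\dim E^c=1$. In Theorem \ref{teo-CPSY} the central part is $E^c=E^c_1\oplus\dots\oplus E^c_k$ with possibly $k\ge 2$, and this generality is actually used in the proof of Theorem B: the remark is invoked there precisely to rule out $E^s$ or $E^u$ being trivial, in which case the splitting of the $3$-manifold may a priori have two or three one-dimensional central bundles. A periodic orbit of stable dimension $\dim E^s$ must have \emph{all} $k$ central exponents positive, so a sign dichotomy on a single central exponent does not suffice; what is needed is the index-interval theorem (Abdenur--Bonatti--Crovisier--D\'iaz--Wen, as used in \cite{CSY}) asserting that the stable dimensions of periodic orbits of $C$ fill the whole interval $[\dim E^s,\dim E^s+k]$. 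Second, even for $k=1$ the mechanism you name for the hard step does not close: non-uniform expansion of $E^c\oplus E^u$ over $C$ yields, via the ergodic closing lemma, an ergodic measure with central exponent $\le 0$ and periodic orbits whose central exponents converge to it --- but in your contradiction scenario (all periodic central exponents positive with infimum $0$) that limiting exponent is exactly $0$, and the approximating orbits may again all have positive central exponent. The honest route to a negative-exponent orbit inside $C$ is Franks' lemma plus the transition machinery of Bonatti--D\'iaz--Pujals combined with a genericity/semicontinuity argument, i.e.\ precisely the index-interval theorem. Since that statement is the entire nontrivial content of the remark, deferring it to \cite{CSY} is acceptable, but the substitute mechanism you sketch would not work as written.
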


\begin{proof}[\sc Proof of Theorem B]
We first introduce the dense G$_\delta$ subset $\cG$ of the space $\diff 1\setminus \overline{\cT}$
of diffeomorphisms $f$ satisfying:
\begin{itemize}
\item $f$ is a continuity point of the map $\Gamma$
which associate to a diffeomorphism $g$,
the numbers
of its quasi-attractors.
\item $f$ and $f^{-1}$ satisfy Theorems \ref{teo-CPSY} and \ref{Teo-FinitelyManyMinimal}.
\end{itemize}
Indeed the map $\Gamma$
varies semi-continuously with the diffeomorphism, hence
its continuity points is a residual set.

We now prove the conclusion of Theorem B for diffeomorphisms $f$ in $\cG$.
More precisely:
\smallskip

\paragraph{\it Case 1.} If $f$ has finitely many quasi-attractors,
 then the same holds for diffeomorphisms $C^1$-close, since $\cG$ is made by continuity points of $\Gamma$. (This is a classical argument, see e.g. \cite{Crov-habilitation} for similar arguments.)
\smallskip

\paragraph{\it Case 2.} If $f$ has infinitely many quasi-attractors, then it must have infinitely many sinks.
Otherwise let us assume by contradiction that $f$ has infinitely many quasi-attractors $Q_n$ that are non-trivial, i.e. not sinks.
One may assume that $(Q_n)$ converges for the Hausdorff topology towards an invariant compact set $K$,
contained in a chain-recurrence class. The set $K$
is not a sink nor a source and can not be uniformly hyperbolic.
By Theorem \ref{teo-CPSY}, it admits a dominated splitting $T_K M=E^s\oplus E^c_1\oplus\dots\oplus E^c_k\oplus E^u$ where $E^c_i$ are one-dimensional.
As there are finitely many sinks, Remark \ref{r.CPSY} implies that the bundles $E^s$ and $E^u$ are non-trivial. Since $M$ is 3-dimensional and $K$ is not hyperbolic, it follows that $K$ must be partially hyperbolic:
it has a dominated splitting $E^s\oplus E^c\oplus E^u$,
where $E^s,E^c, E^u$ are non-trivial (and one-dimensional). The same holds for the maximal invariant set $\Lambda$
in a neighborhood of $K$.
The $Q_n$ for $n$ large are contained in $\Lambda$ and are $\cW^u$ saturated since they are quasi-attractors.
This contradicts Theorem \ref{Teo-FinitelyManyMinimal}.
\smallskip

Let $\cO$ be the set of diffeomorphisms $f\in \diff 1\setminus \overline{\cT}$ such that any diffeomorphism
$C^1$ close has finitely many quasi-attractors.
We set
$$\cU=\cO\cup (\diff 1\setminus \overline{\cT\cup \cO}).$$
It is dense and open in $\diff 1\setminus \overline{\cT}$.

Consider $f$ in $\cU$. If $f\in \cO$, the first conclusion of Theorem B holds.
Otherwise, $f$ is limit of diffeomorphisms $g_n\in \cG\setminus \cO$.
Since $g_n\not\in \cO$, the case 2 above holds and so each $g_n$ has infinitely many sinks.
This proves TheoremB.
\end{proof}

\begin{proof}[\sc Proof of Corollary D] This follows directly by applying Theorem~C to the maximal invariant set in a
neighborhood of $Q$.
\end{proof}

\section{Preliminaries}\label{Section-Preliminaries}

\subsection{Partial hyperbolicity and cone-fields}\label{ss.cones}

Let $M$ be a Riemannian manifold. Let $E_x \en T_xM$ be a subbundle. We define the $\eps$-\emph{cone} around $E_x$ to be

$$ \cE^E_\eps (x) := \{ v \in T_xM \ : \ v=v^E + v^{E^\perp} \ : \ \|v^{E^\perp} \| < \eps \|v^E \| \} \cup \{0\} $$

\noindent where $E^\perp$ is the orthogonal subbundle of $E$ and for every $v \in T_xM$ we denote by $v= v^E + v^{E^\perp}$  the unique decomposition of $v$ in vectors of $E$ and $E^\perp$. We will denote by $\overline{\cE}^E_\eps(x)$ the closure of $\cE^E_\eps(x)$ in $T_xM$.

Given a continuous bundle $E \en T_K M$ in a compact set $K \en M$ one can always define a continuous extension of $E$ to a neighborhood $U$ of $K$ (see \cite[Proposition 2.7]{CroPot}).

A \emph{continuous cone-field} $\cE^E$ in $U$ of width $\eps: U \to \RR_{>0} \in C^0(U, \RR)$ around $E$ is given by assigning to each $x \in U$ the cone $\cE^E_{\eps(x)}(x)$. We say the cone-field has \emph{dimension} $\dim E$ and \emph{width} $\eps$. The \emph{angle} of the cone-field will be $\sup_{x \in U} \{ \eps(x) \}$.

Given $f: M \to M$ a $C^1$-diffeomorphism, we say that the cone-field $\cE^E$ defined in $U$ is \emph{strictly} $Df$-\emph{invariant} if for every $x\in U \cap f^{-1}(U)$ we have that

$$D_xf (\overline{\cE}^E_{\eps(x)}(x)) \en \cE^E_{\eps(f(x))}(f(x))$$

Let $f: M \to M$ be a $C^1$-diffeomorphism and $\Lambda \en M$ a compact $f$-invariant partially hyperbolic set.

Once $\Lambda$ is fixed, we can choose a continuous Riemannian metric given by \cite{Gourmelon-Addapted} such that:

\begin{itemize}
\item All bundles of the partially hyperbolic splitting are orthogonal in $\Lambda$.
\item Vectors in $E^s$ are expanded by $Df^{-1}$ and vectors in $E^{u}$ are expanded by $Df$ on $\Lambda$.
\item For any constant $\varepsilon>0$, the cones $\cE^u_\varepsilon$ and $\cE^{cu}_\varepsilon$ on $\Lambda$
of constant width $\varepsilon$ around $E^u$ and $E^{c}\oplus E^u$ respectively are strictly $Df$-invariant.
\item For any constant $\varepsilon>0$, the cones $\cE^s_\varepsilon$ and $\cE^{cs}_\varepsilon$ on $\Lambda$
of constant width $\varepsilon$ around $E^s$ and $E^{s}\oplus E^c$ respectively are strictly $Df^{-1}$-invariant.
\end{itemize}

We fix the width of the cones $\varepsilon_0>0$ once and for all. By continuity,
we fix a small compact neighborhood $U_1$ of $\Lambda$ such that:

\begin{itemize}

\item There exist continuous and strictly $Df$-invariant cone-fields $\cE^u, \cE^{cu}$ of constant width which are defined in $U_1$,
extending $\cE^u_{\varepsilon_0}$ and $\cE^{cu}_{\varepsilon_0}$.
\item There exist continuous and strictly $Df^{-1}$-invariant cone-fields $\cE^s, \cE^{cs}$ of constant width defined in $U_1$
extending $\cE^s_{\varepsilon_0}$ and $\cE^{cs}_{\varepsilon_0}$.
\item Vectors in $\cE^s$ are expanded by $Df^{-1}$ and vectors in $\cE^{u}$ are expanded by $Df$.
\end{itemize}

We will choose a smooth Riemannian metric close to the continuous one above where all properties still hold except that orthogonality of the bundles is slightly perturbed. This Riemannian metric will remain fixed.

We will not prove this proposition as it is standard, we refer the reader to \cite[Section 2.2]{CroPot} for a proof.

\begin{prop}\label{Proposition-Cones}
Let $f: M \to M$ be $C^1$-diffeomorphism and $\Lambda$ a partially hyperbolic set. Then, there exists a neighborhood $U$ of $\Lambda$ and a neighborhood $\cU$ of $f$ such that if $\cE^s, \cE^u, \cE^{cs}, \cE^{cu}$ are the continuous cone-fields for $f$ defined as above then the following holds:

\begin{itemize}
\item[(i)] For every $g\in \cU$, the cone-fields $\cE^{u},\cE^{cu}$ are strictly $Dg$-invariant and $\cE^{s}, \cE^{cs}$ are strictly $Dg^{-1}$-invariant in $U$.

\item[(ii)] For every $g \in \cU$, vectors in $\cE^u$ are expanded by $Dg$ and vectors in $\cE^{s}$ are expanded by $Dg^{-1}$.

\item[(iii)] For every $a >0$ there exists $N_a>0$ such that if $g\in \cU$ and $x \in \overline U \cap \ldots \cap g^{-N_a}(\overline U)$ then $D_xg^{N_a}(\cE^{\sigma}(x))$ is contained in an cone of width $a$ of $T_{g^N(x)}M$ of the same dimension ($\sigma=u,cu$).

\item[(iv)] For every $a>0$ there exists $N_a>0$ such that if $g\in \cU$ and $x \in \overline U \cap \ldots \cap g^{N_a}(\overline U)$ then $D_xg^{-N_a} (\cE^{\sigma}(x))$ is contained in an cone of width $a$ of $T_{g^{-N_a}(x)}M$ of the same dimension ($\sigma=s,cs$).
\end{itemize}
\end{prop}

\begin{obs}\label{Remark-PersistencePH} As a consequence we know that the maximal invariant subset of $g \in \cU$ in $U$ is also partially  hyperbolic and the invariant bundles $E^\sigma$ for $g$ over the maximal invariant set are contained in the cones $\cE^\sigma$ ($\sigma=s,cs,cu,u$). In particular, the dimensions of the bundles remain unchanged and the bundles themselves vary continuously with respect to the points and the diffeomorphism.
\end{obs}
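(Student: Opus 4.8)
The plan is to reconstruct, for every $g\in\cU$, the invariant bundles of $g$ directly from the \emph{persistent} cone-fields provided by Proposition~\ref{Proposition-Cones}, following the standard cone criterion for (partially) hyperbolic sets (see \cite[Section~2.2]{CroPot}). Write $\Lambda_g=\bigcap_{n\in\ZZ}g^n(U)$ for the maximal invariant set of $g$ in $U$; it is compact, $g$-invariant, and the set $\{(g,x):g\in\cU,\ x\in\Lambda_g\}$ is closed in $\cU\times M$ since the maximal invariant set depends upper-semicontinuously on $g$. For $x\in\Lambda_g$ I would set
\[
E^u_g(x):=\bigcap_{n\ge 0}D_{g^{-n}x}g^{n}\bigl(\overline{\cE}^u(g^{-n}x)\bigr),\qquad
E^{cu}_g(x):=\bigcap_{n\ge 0}D_{g^{-n}x}g^{n}\bigl(\overline{\cE}^{cu}(g^{-n}x)\bigr),
\]
and symmetrically $E^s_g(x),E^{cs}_g(x)$ using backward iterates and the $Dg^{-1}$-invariant cones; this makes sense because the whole orbit of $x$ stays in $U$. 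By strict invariance (item~(i)) these are nested intersections of closed cones, and by items~(iii)--(iv) the $n$-th cone in each intersection has width $\le a$ once $n\ge N_a$, with $N_a$ depending only on $a$ (in particular independent of $g\in\cU$ and of $x\in\Lambda_g$); a standard ``graph over the axis'' argument then shows that each such intersection is a single linear subspace of the same dimension as the corresponding cone. This gives $\dim E^s_g=\dim E^s$, $\dim E^u_g=\dim E^u$, $\dim E^{cs}_g=\dim(E^s\oplus E^c)$, $\dim E^{cu}_g=\dim(E^c\oplus E^u)$, that all four bundles are $Dg$-invariant with $E^\sigma_g\subset\cE^\sigma$, and (since all four cones have the same small width $\eps_0$, so $\overline{\cE}^s\subset\overline{\cE}^{cs}$ and $\overline{\cE}^u\subset\overline{\cE}^{cu}$) that $E^s_g\subset E^{cs}_g$ and $E^u_g\subset E^{cu}_g$.

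Next I would assemble the splitting. Because $\eps_0$ is small, cones around complementary sums of the original bundles meet only at the origin, $\cE^s\cap\cE^{cu}=\{0\}$ and $\cE^{cs}\cap\cE^u=\{0\}$, so the dimension count forces $T_xM=E^s_g(x)\oplus E^{cu}_g(x)=E^{cs}_g(x)\oplus E^u_g(x)$ for every $x\in\Lambda_g$; item~(ii) then makes $E^s_g$ uniformly contracted and $E^u_g$ uniformly expanded by $Dg$. Setting $E^c_g:=E^{cs}_g\cap E^{cu}_g$, dimension counting gives $\dim E^c_g=\dim E^c$, $E^{cs}_g=E^s_g\oplus E^c_g$, $E^{cu}_g=E^c_g\oplus E^u_g$, and $T_xM=E^s_g\oplus E^c_g\oplus E^u_g$. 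To obtain the full partial hyperbolicity I would intersect the \emph{proper} sub-cones $\cE^s$ and $\cE^u$ with the invariant bundles $E^{cs}_g$ and $E^{cu}_g$: this produces strictly invariant cone-fields inside those bundles which thin at a definite rate (again by items~(iii)--(iv)), and the cone criterion applied there yields the dominations $E^s_g\prec E^c_g$ and $E^c_g\prec E^u_g$; together with the uniform contraction/expansion of the extremal bundles this is exactly condition~\eqref{eq:PH} for $g$ on $\Lambda_g$, after possibly enlarging the exponent $\ell$ uniformly.

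For the continuity statement I would argue that, for each fixed $n$, the subspace obtained by applying $D_{g^{-n}x}g^{n}$ to the (continuous, $g$-independent) axis of $\cE^\sigma(g^{-n}x)$ depends continuously on $(g,x)$ on the closed set above, while by the uniformity of $N_a$ these $n$-th approximations converge to $E^\sigma_g(x)$ uniformly in $(g,x)$; a uniform limit of continuous maps being continuous, $E^\sigma_g$, and hence $E^c_g=E^{cs}_g\cap E^{cu}_g$, vary continuously with the point $x\in\Lambda_g$ and with $g\in\cU$. The one step I expect to need genuine care is the passage from the nested-cone construction to the two dominations between consecutive bundles of the splitting — i.e. checking that the central bundle really lies ``between'' $E^s_g$ and $E^u_g$ and is not merely a linear complement — which is precisely where the properness of the inclusions $\cE^s\subsetneq\cE^{cs}$, $\cE^u\subsetneq\cE^{cu}$ and the definite thinning rates are used; everything else is the routine cone-field bookkeeping recalled in \cite[Section~2.2]{CroPot}.
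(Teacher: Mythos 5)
Your proposal is correct and is exactly the standard cone-criterion argument that the paper invokes without writing out (the Remark is stated as an immediate consequence of Proposition~\ref{Proposition-Cones}, with the details deferred to \cite[Section~2.2]{CroPot}): nested intersections of the strictly invariant cones, uniform thinning via items~(iii)--(iv) to identify the bundles and their dimensions, the cone criterion inside the invariant bundles $E^{cs}_g$, $E^{cu}_g$ for the dominations, and uniform convergence of the finite-time approximations for continuity in $(g,x)$. No gaps; this matches the intended proof.
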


\subsection{Stable-Unstable Laminations}

The following classical result (see for example \cite{CroPot}, \cite{HPS} or \cite[Appendix B]{BDV}) will be important in our study.

We let $f: M \to M$ be a $C^1$-diffeomorphism and $\Lambda$ a compact partially hyperbolic set for $f$. We fix the Riemannian metric chosen in the previous subsection and compact neighborhoods $U_1 \en U$ of $\Lambda$ and $\cU$ of $f$ such that the cone-fields in Proposition \ref{Proposition-Cones} are well defined in $U$ and for every $g \in \cU$ satisfy the conditions in Proposition \ref{Proposition-Cones}. We define:

$$ \Lambda_g = \bigcap_{n\in \ZZ} g^n( U_1) $$

\noindent Since $\Lambda_g$ is partially hyperbolic for $g$ we denote its bundles by $E^\sigma_g$ ($\sigma=c,s,u$).

Here $\DD^d$ denotes the Euclidean closed disk of radius $1$ of dimension $d$.

\begin{teo}[Stable Manifold]\label{Teo-StableManifold}
Given $g\in \cU$ there is a function $\cD^s_g : E^s_g|_{\Lambda_g} \to U_1$ such that
if we denote  $\cD^s_{g,x} = \Image \cD^s_g|_{E^s_g(x)}$ then:

\begin{itemize}
\item if $0_x$ is the zero vector in $E^s_{g,x}$, then $\cD^s_g(0_x)= x$ and $T_x \cD^s_{g,x}= E^s_{g,x}$.

\item \emph{(Trapping property)} $g(\overline{\cD^s_{g,x}}) \en \cD^s_{g,g(x)}$.

\item \emph{(Tangency to the bundle)} For every $x \in \Lambda_g$ and $z \in \cD^s_{g,x}$ one has that $T_z \cD^s_{g,x} \en \cE^s(z)$.

\item \emph{(Convergence)} For every $y \in \cD^s_{g,x}$ one has that $d(g^n(y),g^n(x)) \to 0$ exponentially as $n \to +\infty$.

\item \emph{(Coherence)} For every $x,y \in \Lambda_g$ one has that $\cD^s_{g,x} \cap \cD^s_{g,y}$ is relatively open in $\cD^s_{g,x}$ and $\cD^s_{g,y}$.

\end{itemize}

Moreover, the map $g \mapsto \cD^s_g$ is continuous in the sense that if $g_n \to g$ and $x_n \in \Lambda_{g_n}$ verifies that $x_n \to x \in \Lambda_g$, then, the maps $\cD^s_{g_n}|_{E^s_{g_n,x_n}}$ converge\footnote{Notice that by Remark \ref{Remark-PersistencePH} the bundles $E^s_{g_n,x_n}$ converge to $E^s_{g,x}$ so that this convergence makes sense.} in the $C^1$-topology to $\cD^s_{g}|_{E^s_{g,x}}$.
\end{teo}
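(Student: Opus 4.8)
The plan is to build the family $\cD^s_g$ by the Hadamard graph transform, carried out simultaneously over all $g\in\cU$ and adapted to the $C^1$ category: all the listed properties then come out of the fixed point of the transform and of its dependence on parameters. I only indicate the steps, the details being those of \cite{HPS,BDV,CroPot}.

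First I would linearise the dynamics along $\Lambda_g$. Keeping the smooth metric and the strictly $Dg^{-1}$-invariant cone-field $\cE^s$ of width $\eps_0$ of Proposition~\ref{Proposition-Cones}, recall that for every $g\in\cU$ the bundle $E^s_g$ over $\Lambda_g$ lies inside $\cE^s$, varies continuously with the point and with $g$, and that its vectors are uniformly expanded by $Dg^{-1}$. Using the exponential map of the smooth metric I would identify, for each $x\in\Lambda_g$, a ball of uniform radius in $T_xM=E^s_{g,x}\oplus F_{g,x}$ (with $F_{g,x}:=(E^s_{g,x})^{\perp}$) with a neighbourhood of $x$ in $U_1$, and set $\widetilde g_x:=\exp_{g(x)}^{-1}\circ g\circ\exp_x$. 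Then $\widetilde g_x(0)=0$ and $D_0\widetilde g_x=D_xg$, which carries $E^s_{g,x}$ onto $E^s_{g,g(x)}$ and sends $\overline{\cE^s}$ strictly into $\cE^s$ under its inverse; and since $M$ is compact and $g$ is $C^1$, on its ball $\widetilde g_x$ is $C^1$-close to its linear part, with all bounds uniform in $x\in\Lambda_g$ and in $g\in\cU$.

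The main step is the graph transform. For a fixed small $\rho>0$, let $\cE_g$ be the complete metric space (for the distance weighted by the distance to the zero section) of continuous families $\psi=(\psi_x)_{x\in\Lambda_g}$ of maps $\psi_x\colon E^s_{g,x}(\rho)\to F_{g,x}$ with $\psi_x(0)=0$ and $\mathrm{Lip}(\psi_x)\le\eps_0$. Because $Dg^{-1}$ carries $\overline{\cE^s}$ strictly into $\cE^s$, the component through $0$ of $\widetilde g_x^{-1}(\mathrm{graph}\,\psi_{g(x)})$ is the graph of an $\eps_0$-Lipschitz map over a disk of $E^s_{g,x}$; restricting it to radius $\rho$ defines $\Phi_g(\psi)_x$, hence a self-map $\Phi_g$ of $\cE_g$ — essentially the strong-unstable graph transform of $g^{-1}$ relative to the cone $\cE^s$. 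The standard estimate — the fibre component is contracted by $\|Dg^{-1}|_{F}\|$ while the base is expanded by the conorm $m(Dg^{-1}|_{E^s_g})$, with quotient $<1$ by the domination~\eqref{eq:PH} — shows that $\Phi_g$ is a contraction, with rate and all constants uniform over $\cU$. Let $\psi^g$ be its fixed point, and define $\cD^s_g$ by sending $v\in E^s_{g,x}$ of norm at most $\rho$ to $\exp_x(v+\psi^g_x(v))$. From the fixed-point equation one reads $g(\cD^s_{g,x})\subset\cD^s_{g,g(x)}$, strictly on closures because $g$ strictly contracts along $\cE^s$, so the radius-$\rho$ disk has image well inside the radius-$\rho$ disk; tangency of $\cD^s_{g,x}$ to $\cE^s$ is the constraint $\mathrm{Lip}(\psi^g_x)\le\eps_0$; iterating the trapping gives $g^n(\cD^s_{g,x})\subset\cD^s_{g,g^n(x)}$, and since curves inside these disks are tangent to $\cE^s$, which $Dg$ uniformly contracts, their diameters decay exponentially, yielding the exponential convergence $d(g^n(y),g^n(x))\to0$ for $y\in\cD^s_{g,x}$; coherence is the local uniqueness of a forward-trapped disk tangent to $\cE^s$ through a common point, again supplied by the contraction; and continuity of $g\mapsto\Phi_g$, with uniform constants, gives the $C^0$-continuity of $\psi^g$ in $(g,x)$.

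It remains to get the $C^1$ statements — that each $\cD^s_{g,x}$ is a $C^1$ disk with $D_0\psi^g_x=0$, hence $T_x\cD^s_{g,x}=E^s_{g,x}$, and that $\cD^s_{g_n}|_{E^s_{g_n,x_n}}\to\cD^s_{g}|_{E^s_{g,x}}$ in the $C^1$ topology whenever $g_n\to g$ and $x_n\to x$ — and this is the step I expect to carry the real weight, since the contraction on $\cE_g$ only produces Lipschitz graphs and in the $C^1$ (rather than $C^{1+\alpha}$) category no Hölder bound on $Dg$ is available. The remedy is the fibre contraction theorem: $\Phi_g$ lifts to a fibred contraction on the bundle whose sections are the tangent-plane fields of the graphs, and it is precisely the uniform continuity of $Dg$ that makes this lift continuous; its fixed point is then continuous, so $\psi^g$ is $C^1$ with continuously varying derivative, and $D_0\psi^g_x$ satisfies the linear equation obtained by differentiating the fixed-point relation at $0$, for which the zero section is the only bounded solution because $D_0\widetilde g_x=D_xg$ preserves $E^s$. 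Tracking all constants so that the estimates are uniform over the non-compact $\cU$ — which is what the joint-continuity claim needs — is then routine.
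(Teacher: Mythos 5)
Your proposal is correct, and it coincides with the argument the paper intends: the paper does not prove this statement at all, quoting it as a classical result with references to \cite{HPS}, \cite[Appendix B]{BDV} and \cite{CroPot}, whose proofs are exactly the Hadamard graph transform over the cone-field $\cE^s$ followed by the fibre contraction theorem to obtain $C^1$ regularity and continuous dependence on $(g,x)$, as you sketch.
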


The same property holds for $f^{-1}$ where the roles of $E^s$ and $E^u$ are interchanged giving rise to a family $\cD^u_{g,x}$.

With these objects, we can introduce the \emph{stable and unstable laminations}. For $g \in \cU$ and $x \in \Lambda_g$ we define:

$$ \cW^s_g(x) = \bigcup_{n \geq 0} g^{-n}(\cD^s_{g,g^n(x)}) $$

We will call $\cW^s_g(x)$ the \emph{strong stable manifold} of $x$ for $g$. We define the strong unstable lamination $\cW^u_g$ similarly (its leafs are called \emph{strong unstable manifolds}) using $g^{-1}$.

When we work with $f$ and the original set $\Lambda$ we will not make reference to $f$. Notice however that in principle $\Lambda\neq \Lambda_f$.

We consider in each leaf $\cW^\sigma_{g}$ ($\sigma=s,u$) of the lamination the metric induced by the Riemannian metric chosen in subsection \ref{ss.cones}.We denote the metric inside the leafs of the laminations $\cW^\sigma_g$ by $d_\sigma$. When we write $d_\sigma(x,y)$ it is implicit that $x$ and $y$ lie in the same leaf.

For $\eps>0$, we denote by $\cW^\sigma_{g,\eps}(x)$ the set of points $y \in \cW^\sigma_g(x)$ such that $d_\sigma(x,y)\leq \eps$.

\begin{obs}\label{Remark-ContractionStable} There exists $\lambda \in (0,1)$ and $\nu_0 >0$ such that for every $0<\eps< \nu_0$ and for every $g\in \cU$, $x \in \Lambda_g$ it holds that:
$$ g (\cW^s_{g,\eps}(x)) \en \cW^s_{g,\lambda \eps}(g(x)) \ ; $$
$$ g^{-1}(\cW^u_{g,\eps}(x)) \en \cW^s_{g, \lambda \eps} (g^{-1}(x)) \, .$$
\end{obs}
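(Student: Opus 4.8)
The plan is to reduce the statement to a uniform pointwise contraction estimate for $Dg$ along vectors tangent to the strong stable lamination and then to integrate it along paths inside the leaves; the analogous estimate for $Dg^{-1}$ along the strong unstable lamination gives the second inclusion.

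First I would record a uniform version of Proposition~\ref{Proposition-Cones}(ii): shrinking $\cU$ if necessary so that it has compact closure, there is $\mu>1$ such that $\|D_zg^{-1}v\|\geq\mu\|v\|$ for every $g\in\cU$, every $z\in U_1$ and every $v\in\cE^s(z)$, and symmetrically $\|D_zg\,v\|\geq\mu\|v\|$ for $v\in\cE^u(z)$; this is immediate from compactness, the expansion ratio being continuous and $>1$ on the relevant compact set. Next I would note that each strong stable leaf is tangent to $\cE^s$ at each of its points of $U_1$: if $z\in\cW^s_g(x)$ then $z\in g^{-n}(\cD^s_{g,g^n(x)})$ for some $n\geq0$, hence $T_z\cW^s_g(x)=Dg^{-n}\big(T_{g^n(z)}\cD^s_{g,g^n(x)}\big)\en Dg^{-n}\big(\cE^s(g^n(z))\big)\en\cE^s(z)$, using the tangency-to-the-bundle property of Theorem~\ref{Teo-StableManifold} and the strict $Dg^{-1}$-invariance of $\cE^s$. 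Combining this with the trapping property $g(\overline{\cD^s_{g,x}})\en\cD^s_{g,g(x)}$: if $z$ and $g(z)$ lie in $U_1$ and $v\in T_z\cW^s_g(z)$, then $Dg_zv\in T_{g(z)}\cW^s_g(g(z))\en\cE^s(g(z))$, so writing $v=D_{g(z)}g^{-1}(Dg_zv)$ and applying the uniform bound gives $\|Dg_zv\|\leq\mu^{-1}\|v\|$.

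Then I would fix $\nu_0>0$ small enough, uniformly in $g\in\cU$ and $x\in\Lambda_g$, so that $\cW^s_{g,\nu_0}(x)\en\cD^s_{g,x}$ and every path of length $\le\nu_0$ issuing from $x$ inside $\cW^s_g(x)$ stays in $\cD^s_{g,x}$; this is possible because the $\cD^s_{g,x}$ are images of the unit bundle disks $\DD^{\dim E^s}$ with uniformly bounded distortion and depend continuously on $(g,x)$, which ranges over the compact set $\{(g,x):g\in\overline\cU,\ x\in\Lambda_g\}$. Given $0<\eps<\nu_0$, $x\in\Lambda_g$ and $y\in\cW^s_{g,\eps}(x)$, for each small $\eta>0$ I pick a path $\gamma$ from $x$ to $y$ in $\cW^s_g(x)$ of length $<\eps+\eta<\nu_0$; it stays in $\cD^s_{g,x}\en U_1$, so by the trapping property $g\circ\gamma$ stays in $\cD^s_{g,g(x)}\en\cW^s_g(g(x))\cap U_1$. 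By the previous paragraph, $\mathrm{length}(g\circ\gamma)=\int\|Dg_{\gamma(t)}\gamma'(t)\|\,dt\leq\mu^{-1}\,\mathrm{length}(\gamma)<\mu^{-1}(\eps+\eta)$, whence $d_s(g(x),g(y))\leq\mu^{-1}\eps$ after letting $\eta\to0$; that is, $g(y)\in\cW^s_{g,\lambda\eps}(g(x))$ with $\lambda:=\mu^{-1}\in(0,1)$. The second inclusion follows by repeating the argument for $g^{-1}$, the unstable cone field $\cE^u$, and the family $\cD^u_{g,x}$ (enlarging $\lambda$ if necessary, still $<1$).

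The only genuinely non-routine point is the uniform choice of $\nu_0$ guaranteeing that short intrinsic paths inside the leaves remain in the local disks $\cD^s_{g,x}$; everything else is a compactness argument combined with the length-contraction integral. I would justify this point using the continuous dependence of $\cD^s_g$ on $(g,x)$ asserted at the end of Theorem~\ref{Teo-StableManifold} together with the compactness of the parameter space.
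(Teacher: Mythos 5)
The paper offers no proof of this remark; it is stated as a standard consequence of the plaque families of Theorem \ref{Teo-StableManifold} and of the adapted metric and cone estimates of Section \ref{ss.cones}. Your route --- pointwise contraction of $Dg$ on vectors tangent to the stable plaques, deduced from the $Dg^{-1}$-expansion on $\cE^s$ together with tangency of the plaques to $\cE^s$ and the trapping property, and then integration along almost minimizing paths that remain inside $\cD^s_{g,x}$ --- is exactly the standard argument, and its core is correct (note that you only ever apply your leaf-tangency claim at points of the plaques $\cD^s_{g,x}$ and $\cD^s_{g,g(x)}$, where it is immediate from the theorem, so the general version of that claim is not needed).

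The one step that does not work as written is your source of uniformity in $g$. You cannot ``shrink $\cU$ so that it has compact closure'', and the parameter set $\{(g,x):g\in\overline{\cU},\ x\in\Lambda_g\}$ is not compact: $\Diff^1(M)$ is infinite-dimensional, so no nonempty open set of diffeomorphisms has compact closure, and continuity of $g\mapsto\cD^s_g$ over a non-compact family does not by itself produce a uniform $\mu$ or a uniform internal radius for the plaques. Both uniformities are true and standard, but they must be obtained differently. For $\mu$: compactness of $\overline{U_1}$ and of the unit vectors in the closed cones gives a constant $\mu_0>1$ with $\|D_zf^{-1}v\|\geq\mu_0\|v\|$ for $v\in\cE^s(z)$, and this persists with a slightly smaller constant $\mu\in(1,\mu_0)$ for every $g$ in a sufficiently small $C^1$-neighborhood $\cU$ of $f$, by $C^1$-closeness; this is the uniform content behind Proposition \ref{Proposition-Cones}(ii). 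For $\nu_0$: the uniform lower bound on the internal radius of the disks $\cD^s_{g,x}$ does not follow from the bare continuity statement at the end of Theorem \ref{Teo-StableManifold}, but from the construction of the plaque families (graph transform), whose size and Lipschitz bounds depend only on the cone-invariance and expansion constants, which are uniform over $g\in\cU$; the paper itself relies on this uniform size, for instance in the choice of $r_0$ in Lemma \ref{Lemma-RobustProperty}. With these two uniformities justified in this way, the remainder of your argument (trapping keeps $g\circ\gamma$ inside $\cD^s_{g,g(x)}\subset U_1$, the length estimate gives $\lambda=\mu^{-1}$, and the unstable case is symmetric) goes through verbatim.
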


In this sense, the manifolds $\cW^\sigma_{g,\eps}(x)$ play a similar role as the ones $\cD^\sigma_{g,x}$ in the statement of Theorem \ref{Teo-StableManifold}. We will use $\cW^\sigma_{g,\eps}(x)$ in this paper.

\subsection{Local product structure}

The continuity of the bundles allows to, at a sufficiently small scale, control the geometry of manifolds tangent to them.
Since $E^{cu}$ and $E^s$ are almost orthogonal, this gives:

\begin{prop}\label{Prop-LocalProductStructure}
Consider a $C^1$-diffeomorphism $f: M \to M$, a partially hyperbolic set  $\Lambda$  and open sets $\cU$, $U_1$ and $U$ defined in the previous sections.  There exists a constant $\gamma_0>0$ such that for all $\gamma<\gamma_0$ and
\begin{itemize}
\item for every $g \in \cU$
\item for every $x,y \in \Lambda_g$ such that $d(x,y)<\gamma$ and
\item for every $D$  disk tangent to $\cE^{cu}$ of diameter $2\gamma$ and centered in $y$,
\end{itemize}
\noindent we have that there exists a unique point $z$ in the intersection of $\cW^s_{g,\gamma}(x)$ and $D$.
Moreover, $d(x,z)$
and $d(y,z)$ are smaller than $2d(x,y)$.
\end{prop}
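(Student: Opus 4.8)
The plan is to prove this as a standard local product structure statement, exploiting that $E^{cu}$ and $E^{s}$ are almost orthogonal at scale controlled by the cone widths, and that the stable disks $\cW^s_{g,\gamma}(x)$ depend continuously on the point and the diffeomorphism (Theorem \ref{Teo-StableManifold} and Remark \ref{Remark-PersistencePH}). First I would fix, once and for all, local coordinate charts on $M$: for each point $p$ in a neighborhood of $\Lambda$ we have an exponential chart $\exp_p\colon B(0,\rho)\subset T_pM \to M$ which, in the chosen smooth metric, is close to an isometry (with derivative bounds uniform over the compact neighborhood $\overline U$). In such a chart centered at $x$, the stable disk $\cW^s_{g,\gamma}(x)$ is the graph of a $C^1$ function $\psi^s\colon E^s_{g,x}\cap B(0,\gamma)\to (E^s_{g,x})^\perp$ with $\psi^s(0)=0$ and $D\psi^s(0)=0$, since $T_x\cW^s_{g,x}=E^s_{g,x}$; by the tangency property $T_z\cW^s_{g,x}\subset\cE^s(z)$, the function $\psi^s$ has Lipschitz constant bounded by a quantity $\kappa(\eps_0)$ that can be made small by choosing the cone width $\eps_0$ small. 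Likewise, the disk $D$ tangent to $\cE^{cu}$ of diameter $2\gamma$ centered at $y$ is, in the same chart, the graph of a $C^1$ function whose tangent spaces all lie in $\cE^{cu}$, hence is $\kappa(\eps_0)$-Lipschitz over $E^{cu}$-directions.

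The key step is then a fixed-point / transversality argument. Writing $T_xM = E^s_{g,x}\oplus (E^s_{g,x})^\perp$ and noting that $(E^s_{g,x})^\perp$ and $E^{cu}_{g,x}$ are within angle $O(\eps_0)$ of each other (both being complementary to $E^s$, which is nearly orthogonal to $E^{cu}$), the intersection $\cW^s_{g,\gamma}(x)\cap D$ reduces to solving, for $(a,b)\in E^s_{g,x}\times E^{cu}_{g,x}$ with $|a|,|b|$ small, the equation $\exp_x(a+\psi^s(a)) = \exp_x(v_0 + b + \phi^D(b))$ where $v_0 = \exp_x^{-1}(y)$ has $|v_0|<\gamma$ and $\phi^D$ is the graph function of $D$. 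Since the ``bad'' slopes $D\psi^s$ and $D\phi^D$ are both $O(\eps_0)$ and the splitting $E^s\oplus E^{cu}$ is uniformly transverse with angle bounded below, the linearized system at a solution is invertible with norm of the inverse bounded uniformly; a contraction-mapping argument (or the inverse function theorem with uniform constants) on the ball of radius $2d(x,y)$ produces a unique solution $z$, provided $\gamma_0$ is chosen small enough that $2\gamma_0$ is below the radius of validity of the charts and below $\nu_0$. The distance estimate $d(x,z), d(y,z) < 2d(x,y)$ comes out of the same computation: the solution lies at distance $O(\|v_0\|)$ from both $x$ and $y$, and shrinking $\gamma_0$ (equivalently, $\eps_0$) makes the implicit constant less than $2$.

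The remaining point, uniformity over $g\in\cU$ and over $x,y\in\Lambda_g$, follows because all the quantities entering the argument -- the Lipschitz bounds on $\psi^s$ and $\phi^D$, the lower bound on the angle between $E^s_{g,x}$ and $E^{cu}_{g,x}$, the distortion bounds of the charts -- are uniform on the compact set $\overline U$ and, by Proposition \ref{Proposition-Cones} and Theorem \ref{Teo-StableManifold}, vary continuously with $g$; so one can pick a single $\gamma_0$ working for all $g\in\cU$. I expect the main technical obstacle to be the bookkeeping needed to make the constant $2$ explicit: one must track carefully how the angle between $(E^s)^\perp$ and $E^{cu}$, the graph slopes, and the chart distortion combine, and verify that choosing the cone width $\eps_0$ (fixed back in subsection \ref{ss.cones}) and then $\gamma_0$ small enough indeed forces the displacement of $z$ from $x$ and from $y$ below twice $d(x,y)$ -- rather than merely $O(d(x,y))$. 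Since this is a classical estimate, I would present the argument at the level of detail above and refer to \cite[Section 2.2]{CroPot} or \cite{HPS} for the routine parts, as the paper already does for Proposition \ref{Proposition-Cones}.
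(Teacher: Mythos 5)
Your argument is correct and is exactly the standard graph-transversality reasoning the paper itself invokes (it states the proposition without proof, citing only that $E^{cu}$ and $E^{s}$ are almost orthogonal and that the invariant disks depend continuously on the point and the diffeomorphism): stable and center-unstable disks as graphs with small Lipschitz constants over uniformly transverse subspaces, a contraction/implicit-function step for existence and uniqueness, and the constant $2$ absorbed by shrinking the cone width and $\gamma_0$. Nothing further is needed.
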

 We will assume from now on that $\gamma$ is smaller than $\nu_0$ of Remark \ref{Remark-ContractionStable}.

\section{Perturbation results}\label{Section-Perturbations}

The purpose of this section is to prove Theorem \ref{Thm-PerturbationResult}.

As before, we will consider $f: M \to M$ a $C^1$-diffeomorphism and $\Lambda$ a compact $f$-invariant set which admits a partially hyperbolic
splitting of the form $T_\Lambda M = E^s\oplus E^c \oplus E^u$.

We will fix $s= \dim E^s$, $u=\dim E^u$ and $c= \dim E^c$. We have that $d= \dim M = s+c+u$.

\subsection{Initial constructions}\label{ss.initial}
Let us start by extending continuously the bundles of the dominated splitting as a (non-invariant) splitting $TM=E_1\oplus E_2\oplus E_3$ to a neighborhood $U$ of $\Lambda_f$.
This allows to consider the constant width cones $\cE^\sigma_\alpha$ for any $\alpha>0$ as in Section~\ref{ss.cones}.

Up to reduce $U$, this also defines continuous cone fields $\cE^{\sigma}$, $\sigma\in \{s,cs\}$ (resp. $\sigma\in \{u,cu\}$) which are invariant by $Df^{-1}$ (resp. $Df$),
satisfy the properties stated in Proposition \ref{Proposition-Cones} 
and are contained in the cones $\cE^{\sigma}_{1/10}$.

We recover the constants $C,\gamma$ and open sets $\cU$, $U$ and $U_1$ of Proposition \ref{Prop-LocalProductStructure}.

We introduce $\Delta= \max_{x \in U  ,  g\in \cU} \{ \|D_x g \|, \|D_x g^{-1} \| \}$.

\subsection{Strategy and plan of the proof}

We start by giving the strategy of the proof and a plan on how we will implement it.

The proof has three stages:

\begin{itemize}
\item Construction of the local perturbation which breaks locally the joint integrability. For this, it is important that the perturbation is made in a region which is disjoint from many iterates so that one can alter the position of one bundle in a way that the other bundle is almost untouched.

\item Construction of wandering slices disjoint from several iterates.

\item Construction of sections of the unstable lamination on which perturbations will be placed. In order to break all possible joint integrability these sections have to cover all the local strong unstable manifolds.
\end{itemize}

Local $C^1$ perturbations breaking joint integrability already appeared before (for instance in \cite{DW}). The main novelty here is the last stage where we achieve the control of all local unstable leafs.

We now give the plan of the proof to help the reader transit this section.

First, in subsection \ref{ss.cubes} we give some fixed coordinates in a neighborhood $U$ of $\Lambda_f$ which respect the bundles of the partial hyperbolicity and introduce in subsection \ref{ss.elementary} the elementary perturbation we will later place in several parts of the manifold.

In subsection \ref{ss:perturbation1} we define the perturbation of $f$ that will give the proof of Theorem \ref{Thm-PerturbationResult} but do not specify in which places the elementary perturbations will take place. This section already shows the type of perturbation which one will make, but one needs to wait to subsection \ref{ss:perturbation2} to see where the elementary perturbations will be placed.

In subsections \ref{ss:controlbundles} and \ref{ss:breakjoint} we control the effects of the perturbations and show that for points whose unstable manifolds are captured by the perturbation, the joint integrability is broken.

In subsections \ref{ss:coverings}, \ref{ss:wandering}, \ref{ss:section} and \ref{ss:choicerect} we construct the wandering slices and the section of the unstable manifold so that the perturbation has the desired effect.

Since the proof is involved and the choice of constants is quite subtle, we sum up all the choices of constants made in subsection \ref{ss:perturbation2} to show clearly that the choices are consistent. Finally, in subsection \ref{ss:check} we check that the perturbation we made proves Theorem \ref{Thm-PerturbationResult}.

\subsection{Cubes adapted to the dominated splitting.}\label{ss.cubes}
Given a sufficiently small $\rho>0$, we define the \emph{$\rho$-cube $C_\rho^x$} at $x\in U_0$ to be the image of the cube $[-\frac{\rho}{2},\frac{\rho}{2}]^{s} \times [-\frac{\rho}{2},\frac{\rho}{2}]^{c} \times [-\frac{\rho}{2},\frac{\rho}{2}]^{u}$ in the coordinates $T_xM =E_{1}(x) \oplus E_{2}(x) \oplus E_3(x)$ by the exponential map $\exp_x : T_x M \to M$.

One can take a coordinate map $\psi^x_\rho: C^x_\rho \to [0,1]^d$ which consists in composing the inverse of the exponential map with an affine map from $T_xM$ to $\RR^d$ which is an homothety of ratio $\rho^{-1}$ on each space $E_i(x)$ and sends the splitting $E_1(x)\oplus E_2(x)\oplus E_3(x)$ to the canonical splitting of $\RR^d = \RR^{s} \oplus \RR^{c} \oplus \RR^{u}$.

\smallskip

A \emph{slice of width $\eta$} inside a $\rho$-cube $C^x_\rho$ with coordinates $\psi^x_\rho$ is a subset $T \en C^x_\rho$ of the form
$$T=(\psi^{x}_\rho)^{-1}(([0,1]^{s+c} \times [0,\eta]^u)+ v),$$
\noindent where $v \in \{0\}^{s+c} \times [0, 1 -\eta]^u$. We also define a \emph{rectangle of width $\eta$} inside the $\rho$-cube to be a subset $R \en C^x_\rho$ of the form
$$R=(\psi^{x}_\rho)^{-1}(([0,\eta]^s \times [0,1]^{c} \times [0,\eta]^{u})+ w),$$
\noindent with $w \in [0,1-\eta]^s \times \{0\}^{c} \times [0, 1-\eta]^u$.

\begin{figure}[ht]
\begin{center}
\includegraphics[scale=0.4]{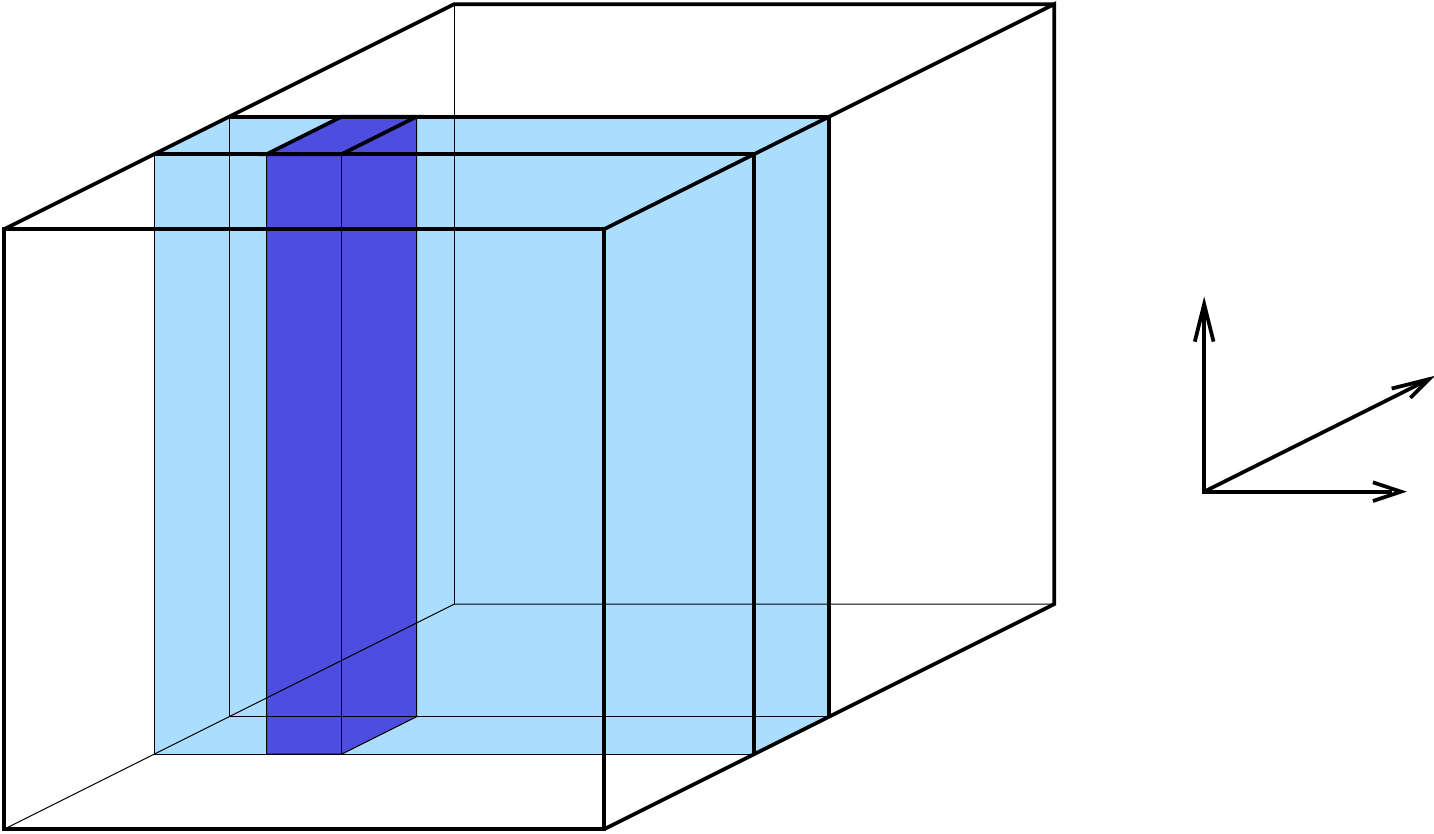}
\begin{picture}(0,0)
\put(-211,91){$R$}
\put(-130,62){$T$}
\put(-105,100){$C_\rho^x$}
\put(-42,100){$c$}
\put(-10,68){$s$}
\put(-10,90){$u$}
\end{picture}
\end{center}\end{figure}

We have chosen the cones $\cE^{u}$ thin enough so that any disc in a cube $C_\rho^x$ which intersects $C_{\rho/3}^x$ and is tangent to $\cE^u$ is contained in the region $(\psi^x_\rho)^{-1}([\frac 1 4,\frac 3 4]^{s+c}\times [0,1]^u)$.


\subsection{An elementary perturbation}\label{ss.elementary}
The following lemma will allow to perform local perturbations in $M$
and to break the joint integrability near a point.

\begin{lema}\label{Lema-ElementaryPerturbation} Given $\kappa >0$ and $\eta \in (0,1)$ there exists a $C^\infty$ diffeomorphism
$$ h^{\eta}_\kappa : [0, \eta]^s \times [0,1]^c \times [0, \eta]^u \to  [0, \eta]^s \times [0,1]^c \times [0, \eta]^u $$
which is the identity in a neighbourhood of the boundary, which is $\kappa$-$C^1$-close to the identity and which has the following properties:
\begin{itemize}
\item[(i)] For every disk $\cD^u:=\{x^s\} \times \{x^c \} \times [0, \eta]^u$ with $(x^s,x^c)\in [\frac 1 4  \eta,\frac 3 4 \eta]^s\times [1/4,3/4]^c$, the image $h^{ \eta}_\kappa(\cD^u)$ contains two points whose second coordinates differ by more than $\frac{\kappa \eta}{10}$ and whose third coordinates belongs to $[\frac 1 4 \eta,\frac 3 4 \eta]^u$.

\item[(ii)] The map does not change the first coordinate, i.e. $h_\kappa^\eta$ has the form
\begin{equation}\label{eq:elementary} h_\kappa^{\eta}(x^s,x^c,x^u) = (x^s, (h_\kappa^{\eta})_{x^s}(x^c,x^u) ).
\end{equation}
\end{itemize}
\end{lema}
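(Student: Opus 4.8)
The construction is a completely explicit one-dimensional shear in the $(x^c, x^u)$-plane, applied independently over each first-coordinate slice $\{x^s\}$, and damped to the identity near the boundary. The plan is as follows. First I fix a smooth bump function $\beta\colon [0,1]^c \to [0,1]$ that equals $1$ on $[1/8,7/8]^c$ and vanishes near $\partial[0,1]^c$, and a smooth bump $\alpha\colon [0,\eta]^u \to [0,1]$ that equals $1$ on $[\eta/8, 7\eta/8]$ and vanishes near the endpoints $\{0,\eta\}$; similarly a smooth damping $\chi\colon [0,\eta]^s\to[0,1]$ equal to $1$ on $[\eta/8,7\eta/8]^s$ and vanishing near $\partial[0,\eta]^s$. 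Then I define
\begin{equation*}
h^\eta_\kappa(x^s,x^c,x^u) = \bigl(x^s,\ x^c + \tfrac{\kappa\eta}{4}\,\chi(x^s)\,\alpha(x^u)\,b(x^c),\ x^u\bigr),
\end{equation*}
where $b$ is a fixed smooth function on $[0,1]^c$ with $b\equiv 1$ on $[1/4,3/4]^c$ and $b$ supported in $[1/8,7/8]^c$, so that the whole perturbation only moves the $c$-coordinate, by an amount bounded by $\tfrac{\kappa\eta}{4}$, and only in the interior. This immediately gives the required form \eqref{eq:elementary} (property (ii)), since the first coordinate $x^s$ is untouched, and it is the identity near the boundary by the damping factors.

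The next step is to check that this map is a diffeomorphism and is $\kappa$-$C^1$-close to the identity. For the $C^1$-closeness, the only nontrivial partial derivative is $\partial_{x^c}$ of the second component, which is $1 + \tfrac{\kappa\eta}{4}\chi(x^s)\alpha(x^u) b'(x^c)$, and $\partial_{x^s}, \partial_{x^u}$ of the second component, which are of size $O(\kappa\eta)$ times the $C^1$-norms of the fixed bumps. Since $\eta<1$ and the bump functions are fixed once and for all, all these perturbative terms are $\le \kappa$ up to a fixed multiplicative constant; absorbing that constant into the choice of the bumps (or, equivalently, shrinking the coefficient $\tfrac{\kappa\eta}{4}$ to $c_0\kappa$ for a small enough universal $c_0$) makes the map genuinely $\kappa$-$C^1$-close to the identity. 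In particular $\partial_{x^c}$ of the second coordinate stays positive, so $h^\eta_\kappa$ is a diffeomorphism of the box onto itself: it fixes each fiber $\{x^s\}\times[0,1]^c\times[0,\eta]^u$ setwise, and on each fiber it is the time-one (or rather explicit) shear of $x^c$ which is monotone in $x^c$, hence bijective, and equals the identity near the fiber boundary.

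Finally I verify property (i). Fix a disk $\cD^u=\{x^s\}\times\{x^c\}\times[0,\eta]^u$ with $(x^s,x^c)\in[\tfrac14\eta,\tfrac34\eta]^s\times[1/4,3/4]^c$. On this range $\chi(x^s)=1$ and $b(x^c)=1$, so for points of $\cD^u$ the map acts by $x^u\mapsto (x^c + \tfrac{\kappa\eta}{4}\alpha(x^u),\, x^u)$ in the last two coordinates. Choosing one point with $x^u$ where $\alpha(x^u)=1$, say $x^u = \eta/2$ (which lies in $[\tfrac14\eta,\tfrac34\eta]$), and a second point with $x^u$ at, say, $x^u = \eta/4$ where $\alpha$ is also close to $1$ — or simply two points both in the plateau region $[\eta/8,7\eta/8]$ where $\alpha$ takes slightly different but controlled values — I get image points whose third coordinates both lie in $[\tfrac14\eta,\tfrac34\eta]^u$ and whose second coordinates differ by $\tfrac{\kappa\eta}{4}|\alpha(x^u_1)-\alpha(x^u_2)|$. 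To make this difference exceed $\tfrac{\kappa\eta}{10}$ I should instead design $\alpha$ so that it attains the values $0$ and $1$ within the subinterval $[\tfrac14\eta,\tfrac34\eta]$: e.g. $\alpha\equiv 0$ near $x^u=\tfrac14\eta$ and $\alpha\equiv 1$ near $x^u=\tfrac34\eta$, with $\alpha$ still vanishing near $\{0,\eta\}$ (this is compatible, since $[0,\tfrac14\eta]$ and $[\tfrac34\eta,\eta]$ give room for the damping). Then the two image points at $x^u=\tfrac14\eta$ and $x^u=\tfrac34\eta$ have $c$-coordinates differing by exactly $\tfrac{\kappa\eta}{4}>\tfrac{\kappa\eta}{10}$, as required, and both $u$-coordinates lie in $[\tfrac14\eta,\tfrac34\eta]$. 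The only real subtlety — the ``main obstacle'', such as it is — is bookkeeping: arranging all the bump functions so that the plateaus and the prescribed endpoint values coexist with the vanishing near the boundary, while keeping the $C^1$-size honestly below $\kappa$ (hence the rescaling of the coefficient by a universal small constant). None of this requires any dynamics; it is a self-contained construction in a Euclidean box.
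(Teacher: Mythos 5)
Your construction is correct in substance, but it is a genuinely different route from the paper's. The paper realizes $h^\eta_\kappa$ as the time-one map of a compactly supported Hamiltonian flow in one chosen $(x^c_i,x^u_j)$-plane (the Hamiltonian being $\kappa x_i$ on the middle region, damped by a bump in the remaining variables); you instead write down a bare shear, translating one center coordinate by an amount $\sim\kappa\eta\,\chi(x^s)\alpha(x^u)b(x^c)$. Both are self-contained constructions in a Euclidean box and both give (ii) and the identity near the boundary for free; yours is more elementary (monotonicity in $x^c$ gives the diffeomorphism property directly, no flow needs to be integrated). What the Hamiltonian formulation buys, and your shear loses, is exact area/volume preservation: the paper's Remark \ref{Remark-RegularityVolumePreserving} rests on the elementary perturbations being volume preserving, and your map has Jacobian $1+O(\kappa\eta)\,b'(x^c)\neq 1$ (it cannot be a unimodular shear, since vanishing near $x^c\in\{0,1\}$ forces dependence on $x^c$). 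Since the lemma as stated does not require volume preservation, this does not affect the proof of the lemma itself.

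One quantitative caution about your ``rescale the coefficient by a small universal $c_0$'' step: the coefficient cannot be taken arbitrarily small, because the center-displacement gap you produce is exactly the coefficient times $|\alpha(u_1)-\alpha(u_2)|\le 1$, and it must exceed $\kappa\eta/10$; so after normalizing you need the coefficient to stay in a window, roughly $(\tfrac{1}{10}\kappa\eta,\ \kappa\eta/\max\{\|\chi'\|\eta,\|\alpha'\|\eta,\|b'\|\eta\})$. With your literal plateaus $[\eta/8,7\eta/8]$ the damping slopes are $\gtrsim 8/\eta$, which makes this window empty or borderline ($8c_0\le 1$ versus $c_0>1/10$). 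The fix is the one your setup already allows: only $[\tfrac14\eta,\tfrac34\eta]^s$ (resp.\ $[\tfrac14,\tfrac34]^c$) needs to lie in the plateau, so spread the transitions over length $\approx\eta/4$ (resp.\ $\approx 1/4$), getting slopes $\le 9/\eta$ and a nonempty window, e.g.\ coefficient $\kappa\eta/9$, giving a gap $\kappa\eta/9>\kappa\eta/10$ with $C^1$-size $\le\kappa$. With that bookkeeping made explicit, your proof is complete.
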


\begin{obs} As it can be seen in the proof below, the map $h_{\kappa}^{\eta}$ preserves all the (one-dimensional) coordinate axes, but two - one in the second bundle and one in the third. In particular, one can obtain the points differing in any direction of the central bundle. This gives enough freedom along the second bundle and one can use this result to obtain a \emph{local accessibility} (compare with \cite{DW}).

Also, one can easily adapt the construction in order to preserve a given volume form instead of the canonical one up to adjusting the construction (c.f. Remark \ref{Remark-RegularityVolumePreserving}).
\end{obs}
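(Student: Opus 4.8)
The statement collects several observations about the diffeomorphism $h^\eta_\kappa$ of Lemma \ref{Lema-ElementaryPerturbation}, so the plan is to fix a construction for which they become visible and then to verify each one. I would build $h^\eta_\kappa$ as follows: choose a unit coordinate vector $e^c_{j_0}$ in the factor $\RR^c$ and a unit coordinate vector $e^u_{k_0}$ in the factor $\RR^u$, and let $h^\eta_\kappa$ be the time-one map of a $C^\infty$ vector field $X$ on the box $[0,\eta]^s\times[0,1]^c\times[0,\eta]^u$ that is tangent to the family of $2$-planes parallel to $\mathrm{span}(e^c_{j_0},e^u_{k_0})$, is supported away from the boundary of the box, and bends the $u_{k_0}$-direction towards the $c_{j_0}$-direction — a cut-off infinitesimal rotation in those planes — with amplitude of order $\kappa$. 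Then $X$ has vanishing $\RR^s$-component, which is property (ii), and the amplitude is tuned so that the lower bound $\tfrac{\kappa\eta}{10}$ of property (i) holds while $h^\eta_\kappa$ stays $\kappa$-$C^1$-close to the identity.

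With this choice $h^\eta_\kappa$ leaves every coordinate function unchanged except $x^c_{j_0}$ and $x^u_{k_0}$, the only two coordinates on which the rotation acts; reading ``$h^\eta_\kappa$ preserves the coordinate axis $e_i$'' as ``the $i$-th coordinate function is $h^\eta_\kappa$-invariant'', this is the first assertion: all axes are preserved except one in the center factor and one in the unstable factor. For the second assertion one observes that neither the construction nor the hypotheses of Lemma \ref{Lema-ElementaryPerturbation} use the chosen orthonormal basis of $\RR^c$, only the splitting $\RR^d=\RR^s\oplus\RR^c\oplus\RR^u$; choosing the basis so that $e^c_{j_0}$ is a prescribed unit vector of $\RR^c$, the two points furnished by property (i), whose center coordinates differ by at least $\tfrac{\kappa\eta}{10}$, then differ along that prescribed center direction (when $\dim E^c=1$ this is of course automatic). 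Distributing such elementary perturbations in the cubes of a covering of a neighbourhood of $\Lambda$ and iterating along orbits — the mechanism used in the proof of Theorem \ref{Thm-PerturbationResult}, now exploiting the freedom in the center direction rather than merely the existence of some bending — yields, for a $C^1$-generic diffeomorphism, that any two nearby strong unstable leaves meeting a common strong stable leaf are joined by an $su$-path realising a prescribed center displacement: this is the announced \emph{local accessibility}, to be compared with \cite{DW}.

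For the last assertion, a cut-off infinitesimal rotation in a $2$-plane is divergence-free for the Euclidean area, so $X$ generates a flow preserving Lebesgue measure on the box; to preserve a prescribed smooth volume form $\omega$ one either uses Moser's trick to bring $\omega$ to the standard form on the box and then proceeds as above, or takes $X$ from the start to be divergence-free for $\omega$ — equivalently Hamiltonian in the active $2$-plane for the area form induced by $\omega$ — the quantitative bending of property (i) being unaffected. The step I expect to require genuine care, rather than bookkeeping, is the passage from one elementary bending to honest local accessibility: one must check that the center displacements contributed by the successive elementary perturbations along an orbit segment do not cancel, which, as in \cite{DW}, is ensured precisely because the center direction of each contribution may be prescribed independently.
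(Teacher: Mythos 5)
Your verification is correct and follows essentially the same route as the paper: the elementary map is the time-one map of a cut-off flow acting only in a $2$-plane spanned by one chosen center and one chosen unstable coordinate (the paper uses a cut-off Hamiltonian shear $H=\kappa x_i$ rather than a rotation, a cosmetic difference), so all other coordinates are untouched, the active center direction may be prescribed at will, and volume is preserved because the generator is Hamiltonian in the active plane. One small caveat: a cut-off infinitesimal rotation is \emph{not} automatically divergence-free (the gradient of the bump function contributes $\nabla\phi\cdot X$), but the Hamiltonian formulation you offer as an alternative does give exact volume preservation, exactly as in the paper's construction.
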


\begin{proof}[\sc Proof] In coordinates $(x_1, \ldots, x_{d})$ we choose any given $x_i$ with $s+1 \leq i \leq s+c$ and $x_j$ with $s+c+1 \leq j \leq d$.

We define a hamiltonian (i.e. a smooth function) $H:  [0,1] \times [0,\eta] \to \RR$ such that:

\begin{itemize}
\item it is constant in a neighborhood of the boundary,
\item it is equal to $\kappa x_i$ for $(x_i,x_j) \in [\frac{1}{4}, \frac{3}{4}]\times [\frac 1 4  \eta,\frac 3 4 \eta]$,
\item its derivative is everywhere smaller than $5\kappa.$
\end{itemize}

Choose a smooth bump function $\phi: [0, \eta]^{s} \times [0,1]^{c-1} \times [0, \eta]^{u-1} \to [0,1]$ which equals $0$ in a neighborhood of the bundary, equals $1$ for points in $ [\frac 1 4  \eta,\frac 3 4 \eta]^s \times [\frac{1}{4},\frac{3}{4}]^{c-1} \times  [\frac 1 4  \eta,\frac 3 4 \eta]^{u-1}$ and has derivative everywhere bounded by $5$. We denote the coordinates in $[0, \eta]^{s} \times [0,1]^{c-1} \times [0, \eta]^{u-1}$ to be:

$$ x^s := (x_1, \ldots, x_s) \ ; $$  $$ \hat x^c := (x_{s+1}, \ldots, x_{i-1}, x_{i+1} \ldots, x_{s+c}) \ ;  $$ $$ \hat x^u := (x_{s+c+1},\ldots, x_{j-1}, x_{j+1}, \ldots, x_d). $$

We consider the diffeomorphism $h_{\kappa}^\eta$ which, for each $\mathrm{x}:=(x^s,\hat x^c, \hat x^u)$ preserves the rectangle
$$R_{\mathrm{x}}:=\{(x_1, \ldots, x_{i-1})\} \times [0,1] \times \{(x_{i+1}, \ldots, x_{j-1})\} \times [0,\eta] \times \{(x_{j+1}, \ldots, x_d)\}$$
\noindent It coincides on $R_x$ with the time one map of the hamiltonian flow given by the hamiltonian $\hat H_{\mathrm{x}} : R_{\mathrm{x}} \to \RR$ which maps $(x_i,x_j) \mapsto \phi (x^s, \hat x^c, \hat x^u)  H(x_i,x_j)$. That is, $h_{\kappa}^\eta(x_1,\ldots, x_d)$ in $R_{\mathrm{x}}$ coincides with the solution of the equation:

\[
\left\{ \begin{array}{c}
\dot{x_i} \ =  \ \frac{\partial \hat H_{\mathrm{x}}}{\partial x_j } \\
\dot{x_j}  \ = \ - \frac{\partial \hat H_{\mathrm{x}}}{\partial x_i}
\end{array} \right.
\]
This has the desired properties.
\end{proof}


\subsection{Perturbation in the manifold}\label{ss:perturbation1}
The diffeomorphism $g$ in Theorem~\ref{Thm-PerturbationResult} from $f$ will be obtained as follows:

\begin{itemize}
\item one chooses $\rho,\kappa,\eta>0$ sufficiently small and a large integer $N\geq 1$ whose values will depend on conditions appearing later,

\item one chooses finitely many $\rho$-cubes (not-necessarily disjoint) $C_\rho^{i}$ (with a chart $\psi^i_\rho : C^i_\rho \to [0,1]^d$ as above),

\item inside each cube $C_\rho^{i}$  one chooses some finite collections of slices $T_{i,j}$ of the form
$$T_{i,j}=(\psi^{i}_\rho)^{-1}([0,1]^{s+ c} \times [0,\eta]^u+u_{i,j}),$$ where $u_{i,j}\in \{0\}^{s+c}\times [0,1-\eta]^u$,

\item inside each slice $T_\rho^{i,j}$  one chooses some finite collections of disjoint rectangles $R_{i,j,k}$ of the form
$$R_{i,j,k}=(\psi^{i}_\rho)^{-1}([0, \eta]^s \times [0,1]^c \times [0, \eta]^u+u_{i,j}+v_{i,j,k}),$$ where $v_{i,j,k}\in [0,1-\eta]^s\times\{0\}^{c+u}$,

\item we will ensure the interiors of $f^{-\ell}(T_{i,j})$, for $0\leq \ell\leq N$ and for every $(i,j)$, to be pairwise disjoint; in particular the interiors of
$f^{-\ell}(R_{i,j,k})$, for $0\leq \ell\leq N$ and for every $(i,j,k)$, are pairwise disjoint,

\item we define $g$ to coincide with $f$ outside the union $f^{-1}(\bigcup R_{i,j,k})$ and with
$$(\psi^{i}_\rho)^{-1}\circ \tau_{i,j,k}\circ  h_\kappa^{\eta}\circ \tau_{i,j,k}^{-1}\circ \psi^{i}_\rho\circ f$$
inside each preimage $f^{-1}(R_{i,j,k})$, where $\tau_{i,j,k}$ denotes the translation of $\RR^d$ by $u_{i,j}+v_{i,j,k}$.
\end{itemize}

We have the following property:

\begin{lema}\label{Lema-ManyElementaryPerturbations} Given $\cV$ a $C^1$-neighborhood of $f$ there exists $\rho_0>0$, $\kappa_0>0$ such that if $\rho<\rho_0$ and $\kappa<\kappa_0$, then the resulting diffeomorphism $g$ lies in $\cV$.
\end{lema}

\begin{proof}[\sc Proof] The fact that the perturbation is of $C^0$-size smaller than $\rho$ is direct from the fact that the support of the perturbation is contained in disjoint balls of radius smaller than $\rho$.

To control the $C^1$-size of the perturbation, we make the following remark: There exists a number $\Theta>0$ independent of $\rho$ such that if $h: [0,1]^d \to [0,1]^d$ is a diffeomorphism which is the identity in the boundary then, for any $x$, the $C^1$-size of $\tilde h: C^x_\rho \to C^x_\rho$ given by $\tilde h  = (\psi_\rho^x)^{-1} \circ h \circ \psi^x_\rho$ is at most $\Theta \|h \|_{C^1}$. From the form of the elementary perturbation (Lemma \ref{Lema-ElementaryPerturbation}) we get that if we choose $\kappa$ small enough, the $C^1$-size will be small.

Notice that making $C^1$-small perturbations with disjoint support, one obtains a small $C^1$-perturbation (see for instance \cite[Section 2.9]{Crov-habilitation}).
\end{proof}

This Lemma provides the only restriction on $\kappa$  but we will need to consider more constraints on $\rho$. From now on, $g$ will denote the diffeomorphism obtained as described above.

For each $i,j,k$, we denote by $C_{\rho/3}^{i}$ the $\rho/3$-cube having the same center as $C^i_\rho$ and by $\frac 1 2 R_{i,j,k}$ the sub-rectangle
$$\frac 1 2 R_{i,j,k}=(\psi^{i}_\rho)^{-1}\left(\bigg(\bigg[\frac { \eta} {4} ,\frac {3 \eta} 4\bigg]^s \times [0,1]^c \times \bigg[\frac { \eta} {4} ,\frac {3 \eta} 4\bigg]^u\bigg)+u_{i,j}+v_{i,j,k}\right).$$


\subsection{Control of the new dominated splitting}\label{ss:controlbundles}
We introduce two more constants $\theta,\alpha$, that will be defined later.
They will restrict the choice of $\rho$ to be smaller than some constant
$\rho_1$.

\begin{prop}\label{Prop-NarrowingCones} For any $\theta >0$ there exist $N>0$ and a neighbourhood $U_\theta$ of $\Lambda_f$ such that if $g^{-N+1}(x),\dots,g^{-1}(x),x$ belong to $U_\theta$ and do not intersect the rectangles $R_{i,j,k}$, then $Dg^N(\cE^u(g^{-N}(x)))\subset \cE^u_\theta(x)$ and $Dg^{-N}(\cE^s(x))\subset \cE^s_\theta(g^{-N}(x))$.
\end{prop}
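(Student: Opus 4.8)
The plan is to exploit that $g$ differs from $f$ only inside the preimages of the rectangles $R_{i,j,k}$, so that along any orbit segment $g^{-N+1}(x),\dots,x$ which avoids these rectangles (and stays in a suitable neighborhood of $\Lambda_f$), the map $g$ coincides with $f$ on the relevant dynamical neighborhood. More precisely, the condition that $x, g^{-1}(x),\dots,g^{-N+1}(x)$ do not meet the rectangles means that $g^{-1}(x)=f^{-1}(x)$, $g^{-2}(x)=f^{-2}(x)$, etc., and moreover that $Dg$ at each of these points equals $Df$ (since $g=f$ on a whole neighborhood of each of these orbit points — the supports of the elementary perturbations are contained in the interiors of the $f^{-1}(R_{i,j,k})$, and avoiding $R_{i,j,k}$ at $x$ means $g^{-1}(x)$ avoids $f^{-1}(R_{i,j,k})$). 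Hence $Dg^N$ along this segment equals $Df^N$ along the corresponding $f$-orbit segment, which lies in $U_\theta$.

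The key step is then just Proposition \ref{Proposition-Cones}(iii): for the given $\theta$, there is an integer $N=N_\theta$ so that for any point $z$ with $z, f(z),\dots,f^N(z)$ all in $\overline{U}$ one has $D_zf^N(\cE^u(z))\subset \cE^u_\theta(f^N(z))$. Applying this with $z=f^{-N}(x)=g^{-N}(x)$ gives $Dg^N(\cE^u(g^{-N}(x)))=Df^N(\cE^u(f^{-N}(x)))\subset \cE^u_\theta(x)$, which is exactly the conclusion. The neighborhood $U_\theta$ is taken to be a neighborhood of $\Lambda_f$ small enough that the cone-field $\cE^u$ and the estimates of Proposition \ref{Proposition-Cones} are valid there, and so that the orbit segments genuinely remain inside $U$; concretely one can take $U_\theta\subset U$ with $f^{-j}(U_\theta)\subset U$ for $0\le j\le N$, which is possible by continuity since $\Lambda_f$ is invariant.

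The only subtlety — and the one point that needs care rather than being routine — is the passage from ``$x,\dots,g^{-N+1}(x)$ avoid the rectangles $R_{i,j,k}$'' to ``$g$ and $f$ have the same derivative along the orbit of $g^{-N}(x)$''. This requires matching the indexing of the perturbation: $g$ is modified on $f^{-1}(R_{i,j,k})$, so the derivative of $g$ at a point $w$ equals that of $f$ precisely when $w\notin f^{-1}(\bigcup R_{i,j,k})$, i.e.\ when $f(w)\notin\bigcup R_{i,j,k}$. Thus the hypothesis on $x,\dots,g^{-N+1}(x)$ guarantees $Dg=Df$ at $g^{-1}(x),\dots,g^{-N}(x)$ (since $f$ applied to each of $g^{-1}(x),\dots,g^{-N}(x)$ lands in $x,\dots,g^{-N+1}(x)$, away from the rectangles), and inductively that $g^{-j}(x)=f^{-j}(x)$ for $0\le j\le N$. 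Once this bookkeeping is pinned down, the statement reduces immediately to the strict cone invariance of Proposition \ref{Proposition-Cones}, with no further estimates needed.
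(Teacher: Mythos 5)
Your argument is correct and follows essentially the same route as the paper: apply the cone-contraction statement of Proposition \ref{Proposition-Cones}(iii) to $f$ (the paper uses width $\theta/2$ for slack, which is not essential), and observe that since the segment $g^{-N+1}(x),\dots,x$ avoids the rectangles, it is a genuine $f$-orbit segment with $Dg=Df$ along it, so $Dg^N=Df^N$ there. Your extra bookkeeping about the perturbation being supported in $f^{-1}(\bigcup R_{i,j,k})$ and the induction giving $g^{-j}(x)=f^{-j}(x)$ is exactly the content the paper summarizes in one line.
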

\begin{proof}[\sc Proof] Proposition \ref{Proposition-Cones} (iii) and (iv) applied to $f$ and $\theta/2$ gives and integer $N$ such that $Df^N(\cE^u(x)) \subset \cE^u_{\theta/2}(f^N(x))$
{ and $Df^{-N}(\cE^s(x))\subset \cE^s_\theta(f^{-N}(x))$}
in a neighbourhood $U_\theta$ of $\Lambda_f$.

If the segment of orbit $g^{-N+1}(x),\dots,g^{-1}(x),x$ does not intersect the rectangles $R_{i,j,k}$, it coincides with a segment of orbit of $f$
and the proposition follows.
\end{proof}

\begin{prop}\label{Prop-NarrowingCones2} For any $\alpha >0$ there exists $\rho_1>0$ and a neighborhood $U_\alpha$ of $\Lambda_f$ such that if $\rho<\rho_1$, then
\begin{itemize}
\item $\cE^{cu}_\alpha(x)$ is $Dg$-invariant in $U_\alpha$,
\item $\Lambda_g\subset U_\alpha$.
\end{itemize}
\end{prop}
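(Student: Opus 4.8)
The statement is a robustness claim: for suitable constants $\theta,\alpha$ (which will be fixed once and for all later in the argument), the cone-fields $\cE^s_\alpha$ and $\cE^{cu}_\alpha$ that were built for $f$ near $\Lambda_f$ remain invariant under $Dg^{-1}$ and $Dg$ respectively, provided the cube size $\rho$ is small enough, and moreover the maximal invariant set $\Lambda_g$ of $g$ stays in the neighbourhood $U_\alpha$ where this invariance holds. The key point is that $g$ differs from $f$ only on the preimages $f^{-1}(R_{i,j,k})$ of rectangles contained in $\rho$-cubes, and by Lemma~\ref{Lema-ManyElementaryPerturbations} the $C^1$-distance between $g$ and $f$ goes to $0$ as $\kappa\to0$ and $\rho\to0$. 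So the whole proposition is essentially an instance of robustness of cone-field invariance under $C^1$-small perturbations, together with robustness of ``the maximal invariant set stays in a given neighbourhood''.

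First I would invoke Proposition~\ref{Proposition-Cones}: for $f$ and the fixed width $\varepsilon_0$ (hence also for any smaller width such as the $\alpha$ we will use), there is a neighbourhood $\cU$ of $f$ in $\Diff^1$ and a neighbourhood $U$ of $\Lambda_f$ such that every $h\in\cU$ has $\cE^s_\alpha$ strictly $Dh^{-1}$-invariant and $\cE^{cu}_\alpha$ strictly $Dh$-invariant on $U$, and the maximal invariant set of $h$ in $U_1$ is contained in the interior where these cones are invariant (Remark~\ref{Remark-PersistencePH}). Then I would choose $U_\alpha\subset U$ a slightly smaller compact neighbourhood of $\Lambda_f$ whose interior still contains $\Lambda_f$, and appeal to the standard fact that $\Lambda_g=\bigcap_{n\in\ZZ}g^n(U_1)$ depends upper-semicontinuously on $g$: since $\Lambda_f\subset \operatorname{int}(U_\alpha)$, for $g$ sufficiently $C^1$-close to $f$ one has $\Lambda_g\subset U_\alpha$. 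Finally, by Lemma~\ref{Lema-ManyElementaryPerturbations}, there is $\rho_1>0$ so that for every $\rho<\rho_1$ (and $\kappa<\kappa_0$, already fixed) the diffeomorphism $g$ lies in this neighbourhood of $f$; both bullet points then follow. Note that the \emph{strict} invariance of $\cE^s_\alpha$ and $\cE^{cu}_\alpha$ for $g$ is exactly what Proposition~\ref{Proposition-Cones}(i) gives, uniformly over $g$ in the $C^1$-neighbourhood, so no separate argument is needed for the invariance beyond placing $g$ in $\cU$.

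The only mildly delicate point is the order of quantifiers: $\alpha$ is prescribed first (it is an input, to be pinned down later in the construction), and then $\rho_1$ and $U_\alpha$ are produced in terms of $\alpha$ alone — they do not depend on the combinatorial data of the cubes, slices and rectangles, because the $C^1$-size bound of Lemma~\ref{Lema-ManyElementaryPerturbations} is uniform in that data (it comes from the disjointness of supports and the scale-invariant bound $\Theta\|h\|_{C^1}$). I would state this uniformity explicitly when I write the proof. The ``main obstacle'' here is really just bookkeeping rather than a genuine difficulty: one must make sure that the neighbourhood of $f$ furnished by Proposition~\ref{Proposition-Cones} for the width $\alpha$ contains all the perturbed maps $g$ obtained with $\rho<\rho_1$, which is where Lemma~\ref{Lema-ManyElementaryPerturbations} is used, and that $U_\alpha$ is chosen inside $U\cap U_1$ so that both the cone invariance and the trapping $\Lambda_g\subset U_\alpha$ can be asserted simultaneously. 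With those choices the proof is a two-line citation of the already-established propositions.

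\begin{proof}[\sc Proof of Proposition~\ref{Prop-NarrowingCones2}]
Apply Proposition~\ref{Proposition-Cones} to $f$ with the cone width $\alpha$ (any width smaller than $\varepsilon_0$ is admissible): we obtain a neighbourhood $U$ of $\Lambda_f$ and a $C^1$-neighbourhood $\cU$ of $f$ such that for every $h\in\cU$ the cone-field $\cE^s_\alpha$ is strictly $Dh^{-1}$-invariant and $\cE^{cu}_\alpha$ is strictly $Dh$-invariant on $U$. Shrinking, we may take a compact neighbourhood $U_\alpha$ of $\Lambda_f$ with $U_\alpha\subset \operatorname{int}(U)\cap \operatorname{int}(U_1)$ and still $\Lambda_f\subset\operatorname{int}(U_\alpha)$. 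Since $\Lambda_f=\bigcap_{n\in\ZZ}f^n(U_1)$ and $\Lambda_f\subset\operatorname{int}(U_\alpha)$, by upper semicontinuity of the maximal invariant set there is a (possibly smaller) $C^1$-neighbourhood $\cU'\subset\cU$ of $f$ such that $\Lambda_h\subset U_\alpha$ for every $h\in\cU'$. By Lemma~\ref{Lema-ManyElementaryPerturbations} there exists $\rho_1>0$ such that, with $\kappa<\kappa_0$ fixed, any $\rho<\rho_1$ yields a perturbation $g\in\cU'$. For such $g$ both conclusions hold: $\cE^s_\alpha$ is $Dg^{-1}$-invariant and $\cE^{cu}_\alpha$ is $Dg$-invariant on $U_\alpha$ because $g\in\cU$, and $\Lambda_g\subset U_\alpha$ because $g\in\cU'$. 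This proves the proposition; observe that $\rho_1$ and $U_\alpha$ depend only on $\alpha$ (and on $f$), not on the choice of cubes, slices or rectangles, since the $C^1$-estimate in Lemma~\ref{Lema-ManyElementaryPerturbations} is uniform in that combinatorial data.
\end{proof}
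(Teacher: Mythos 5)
There is a genuine gap, and it is exactly at the step you flagged as ``mild bookkeeping''. Your argument places $g$ in an $\alpha$-dependent $C^1$-neighbourhood $\cU'$ of $f$ by invoking Lemma~\ref{Lema-ManyElementaryPerturbations} with $\rho<\rho_1$, ``with $\kappa<\kappa_0$ fixed''. But in that lemma the $C^1$-size of the perturbation is controlled by $\kappa$ alone (via the bound $\Theta\|h^\eta_\kappa\|_{C^1}$); shrinking $\rho$ only shrinks the $C^0$-size. So for a fixed $\kappa$ the diffeomorphism $g$ stays at $C^1$-distance of order $\kappa$ from $f$, no matter how small $\rho$ is, and it need not belong to the neighbourhood furnished by Proposition~\ref{Proposition-Cones} for cones of width $\alpha$. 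This matters because of the order in which the constants are chosen (subsection~\ref{ss:perturbation2}): $\kappa$ is fixed first, depending only on $\cU\cap\cV$, and $\alpha$ is fixed afterwards via Proposition~\ref{prop-variationLip}, which forces $\alpha<\tfrac{\eta}{2}(\tfrac{\kappa}{20}-\tfrac{\theta}{2})$, i.e.\ $\alpha\ll\kappa$. A $C^1$-perturbation of size $\kappa$ can rotate directions by an angle comparable to $\kappa$ and therefore can destroy the invariance of cone-fields of width $\alpha\ll\kappa$; demanding instead that $\kappa$ be small in terms of $\alpha$ would be circular (and would also shrink the amount of non-joint-integrability $\kappa\eta/10$ produced by Lemma~\ref{Lema-ElementaryPerturbation}, which must stay bounded below relative to $\alpha$). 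So the proposition cannot be obtained as a soft robustness statement; smallness of $\rho$ alone has to do the work, with $\kappa$ fixed.

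The paper's proof uses the specific structure of the elementary perturbation rather than its $C^1$-smallness: by \eqref{eq:elementary} it does not move the $s$-coordinate and acts only in the $(c,u)$-coordinates of the cube, hence it preserves the coordinate-aligned cone fields $\hat\cE^{cu}_\varepsilon$ and $\hat\cE^s_\varepsilon$ of the chart. One then exploits the strict contraction of cone widths by $Df$ (there is $\alpha'<\alpha$ with $Df(\cE^{cu}_\alpha)\subset\cE^{cu}_{\alpha'}$ near $\Lambda_f$) and uses the smallness of $\rho$ only to guarantee that inside each $\rho$-cube the extended bundles $E_i$ are nearly constant, so that $\cE^{cu}_{\alpha'}\subset\hat\cE^{cu}_{\alpha''}\subset\cE^{cu}_\alpha$ for some $\alpha''\in(\alpha',\alpha)$; composing $Df$ with the elementary perturbation then keeps the image inside $\cE^{cu}_\alpha$, and symmetrically for $\cE^s_\alpha$. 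Your treatment of the second bullet ($\Lambda_g\subset U_\alpha$ by upper semicontinuity of the maximal invariant set) is fine, since that only needs $C^0$-closeness, which is indeed governed by $\rho$; but the first bullet needs the coordinate-preservation argument above, not Proposition~\ref{Proposition-Cones} applied to a small $C^1$-neighbourhood.
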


\begin{proof}[\sc Proof]
By Section~\ref{ss.cones}, there exists $\alpha'\in (0,\alpha)$ such that the cones
$\cE^{cu}_{\alpha}$ are $Df$-invariant in $\Lambda_f$ and mapped inside $\cE^{cu}_{\alpha'}$.
By Section~\ref{ss.initial} and continuity, there exists a compact neighborhood $U_\alpha$ of $\Lambda_f$ such that
these properties extend on $U_\alpha$.

On each $\rho$-cube at a point $z$,
one can consider cone fields $\hat \cE^{cu}_\varepsilon$ of constant width $\varepsilon$,
defined around the coordinates directions (so that at the center $z$ of the $\rho$-cube,
$\hat \cE^{cu}_\varepsilon=\cE^{cu}_\varepsilon$).
If $\rho_2>0$ is small enough and $\rho<\rho_2$
for any $x\in U_\alpha$ in a $\rho$-cube $C_\rho^z$, the distance $d(E_i(x),E_i(z))$ is as small as desired
for $i\in \{1,2,3\}$. In this way, there exists $\alpha''\in (\alpha',\alpha)$
such that
$$\cE^{cu}_{\alpha'}(x)\subset \hat \cE^{cu}_{\alpha''}(x) \subset \cE^{cu}_{\alpha}(x).$$

The elementary perturbation (recall \eqref{eq:elementary}) in a $\rho$-cube preserves the cones $\hat \cE^{cu}_\varepsilon$.
We have proved the proposition for $\cE^{cu}$.

To obtain that $\Lambda_g\en U_\alpha$, notice that if $\rho_1$ is small enough, then $f$ and $g$ are arbitrarily $C^0$-close and that the map $h \mapsto \Lambda_h$ is upper-semicontinuous for the Hausdorff topology: i.e. given a neighborhood $V$ of $\Lambda_h$, for $h'$ in a small $C^0$-small neighborhood of $h$, one has that $\Lambda_{h'} \subset V$.
\end{proof}

\begin{obs}\label{Remark-LambdaInUalpha}
From now on, we will choose $U_\alpha \subset U_\theta$ and $\rho_1$ will be chosen so that $\Lambda_g \subset U_\alpha$ and $\cW^\sigma_{g,\rho_2}(x)\subset U_\alpha$ for any $x\in \Lambda_g$ and $\sigma=s,u$.
\end{obs}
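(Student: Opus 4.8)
The plan is to check that the two inclusions asserted in the remark are forced by an appropriately small choice of $\rho_1$, after first shrinking the neighbourhood $U_\alpha$; no new idea enters, and the argument is just compactness together with the continuity/upper-semicontinuity statements already recorded in Remark \ref{Remark-PersistencePH} and at the end of the proof of Proposition \ref{Prop-NarrowingCones2}.

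First I would replace $U_\alpha$ by a compact neighbourhood of $\Lambda_f$ contained in $U_\alpha\cap U_\theta$, so that $U_\alpha\subset U_\theta$. This is harmless: the conclusions of Proposition \ref{Prop-NarrowingCones2} on $U_\alpha$ — strict $Dg^{-1}$-invariance of $\cE^s_\alpha$ and strict $Dg$-invariance of $\cE^{cu}_\alpha$ — are pointwise conditions, hence remain true on the smaller set; and re-running the last paragraph of the proof of Proposition \ref{Prop-NarrowingCones2} (upper semicontinuity of $h\mapsto\Lambda_h$ for the Hausdorff topology, plus $C^0$-closeness of $g$ to $f$ when $\rho$ is small) provides a possibly smaller threshold $\rho_1$ with $\Lambda_g\subset U_\alpha$ whenever $\rho<\rho_1$. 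This already gives $U_\alpha\subset U_\theta$ and the first inclusion.

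For the second inclusion I would set $2\varepsilon_1:=\operatorname{dist}(\Lambda_f,\,M\setminus\interior U_\alpha)$, which is positive because $U_\alpha$ is a compact neighbourhood of $\Lambda_f$ in the compact manifold $M$. Every constraint imposed on $\rho_2$ so far was of the form ``$\rho_2$ sufficiently small'', so I would additionally require $\rho_2<\varepsilon_1$, and then (shrinking $\rho_1\le\rho_2$ and using upper semicontinuity once more) arrange that $\Lambda_g$ lies in the $\varepsilon_1$-neighbourhood of $\Lambda_f$ for $\rho<\rho_1$. Finally, for $x\in\Lambda_g$ and $\sigma\in\{s,u\}$, any $y\in\cW^\sigma_{g,\rho_2}(x)$ satisfies $d(x,y)\le d_\sigma(x,y)\le\rho_2$ (the ambient distance is bounded by the length of any path inside the leaf), so $\cW^\sigma_{g,\rho_2}(x)$ is contained in the $\rho_2$-neighbourhood of $\Lambda_g$, hence in the $(\varepsilon_1+\rho_2)$-neighbourhood of $\Lambda_f$, hence — as $\rho_2<\varepsilon_1$ — in the $2\varepsilon_1$-neighbourhood of $\Lambda_f$, which is contained in $\interior U_\alpha\subset U_\alpha$.

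The only thing requiring attention is the bookkeeping: one must be sure that tightening $\rho_2$ and $\rho_1$ here is compatible with the finitely many ``sufficiently small'' requirements already placed on them in Propositions \ref{Prop-NarrowingCones} and \ref{Prop-NarrowingCones2} — which is precisely why those were stated as upper bounds — so in practice one takes the minimum of all the thresholds produced so far. There is no genuine analytic obstacle; the statement is just a normalization of the constants, and it is the kind of thing one records as a remark rather than a lemma.
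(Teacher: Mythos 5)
Your argument is correct and is essentially the justification the paper leaves implicit: the remark is only a normalization of constants, and the needed ingredients (upper semicontinuity of $h\mapsto\Lambda_h$ plus $C^0$-closeness of $g$ to $f$ for small $\rho$) are exactly those already used in the last paragraph of the proof of Proposition \ref{Prop-NarrowingCones2}, to which you add only the routine compactness/triangle-inequality bookkeeping with $\varepsilon_1$ and the harmless extra requirement $\rho_2<\varepsilon_1$. So you take the same route as the paper; no gap.
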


As a consequence of Proposition \ref{Prop-NarrowingCones2} we obtain:

\begin{lema}\label{Lema-NotChangesDistance}
If $\rho<\rho_1$ then, for every $x \in\Lambda_g$ which belong to a cube $C^i_{\rho/3}$, the connected component $\cD^u$ of $\cW^u_g(x)\cap C^i_{\rho}$ containing $x$ satisfies the following property.

In the $\psi^i_\rho$-coordinates, the projection of $\cD^u$ on the $\RR^s\times \{0\}^c\times \RR^u$-plane is the graph of a $2\alpha$-Lipschitz map $[0,1]^u\to [0,1]^s$.
\end{lema}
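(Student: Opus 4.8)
The plan is to use the strict $Dg$-invariance of the cone field $\cE^{cu}_\alpha$ established in Proposition~\ref{Prop-NarrowingCones2}, together with the fact that the strong unstable leaves of $g$ over $\Lambda_g$ are tangent to $E^u_g\subset \cE^u_\alpha\subset \cE^{cu}_\alpha$. The point is that a disk tangent to a constant-width cone $\cE^{cu}_\alpha$ inside a $\rho$-cube, when read in the $\psi^i_\rho$-coordinates, has tangent planes making a controlled (Lipschitz) angle with the coordinate $\RR^u$-plane, provided the coordinates are chosen so that $\cE^{cu}_\alpha$ corresponds, up to a distortion bounded independently of $\rho$, to a coordinate cone around $\RR^s\oplus\RR^u$. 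First I would recall that the chart $\psi^i_\rho$ sends $E_1\oplus E_2\oplus E_3$ at the center to the canonical splitting and is an affine rescaling of $\exp^{-1}$; hence, by Proposition~\ref{Prop-NarrowingCones2} (more precisely the cone $\hat\cE^{cu}_{\alpha''}$ appearing in its proof) and by reducing $\rho_1$ if necessary, at every point of $C^i_\rho$ the bundle $E^u_g$ lies inside a coordinate cone of width $\alpha$ around $\RR^s\oplus\RR^u$, i.e. it never meets the $\{0\}^{s+c}\times\RR^u$-complement directions too steeply.

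Next I would make precise the notion of ``projection is a graph of an $\alpha$-Lipschitz map''. Let $\pi_{su}\colon\RR^d\to\RR^s\times\{0\}^c\times\RR^u$ be the coordinate projection and $\pi_u\colon\RR^d\to\{0\}^{s+c}\times\RR^u$. The connected component $\cD^u$ of $\cW^u_g(x)\cap C^i_\rho$ through $x$ is a $u$-dimensional $C^1$-disk all of whose tangent spaces $T_z\cD^u$, pushed into coordinates, lie in the coordinate cone around $\RR^u$ of some width $\alpha'<1$ (this is the tangency-to-$\cE^u$ property of Theorem~\ref{Teo-StableManifold}, combined with $\cE^u_g\subset\cE^u_\alpha$ and the coordinate-distortion estimate above; note a cone around $E^u$ is in particular a cone around $E^s\oplus E^u$, and being around $\RR^u$ is even stronger). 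Then $d\pi_u$ restricted to $T_z\cD^u$ is an isomorphism onto $\{0\}^{s+c}\times\RR^u$ with inverse of norm $\le(1-\alpha'^2)^{-1/2}$, so the standard implicit-function/graph argument shows $\pi_u|_{\cD^u}$ is a local diffeomorphism onto its image and $\cD^u$ is locally the graph over the $\RR^u$-coordinates of a map whose differential has norm $\le\alpha$ after a further (harmless) shrinking of the cone widths; since $x\in C^i_{\rho/3}$ and the cones $\cE^u$ were chosen thin in Section~\ref{ss.cubes} precisely so that such a disk stays in $(\psi^i_\rho)^{-1}([\tfrac14,\tfrac34]^{s+c}\times[0,1]^u)$, the component $\cD^u$ actually extends across the whole range $[0,1]^u$ of $\RR^u$-coordinates, so the local graph is in fact a global graph $[0,1]^u\to[0,1]^s$, and it is $\alpha$-Lipschitz.

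I expect the main obstacle to be bookkeeping the two sources of distortion and showing they can be absorbed into the single constant $\alpha$ of the statement: (a) the difference between ``tangent to $\cE^{cu}_\alpha$ (a cone defined by the Riemannian metric)'' and ``tangent to a coordinate cone of the same width in the $\psi^i_\rho$-chart,'' which is the content already handled in the proof of Proposition~\ref{Prop-NarrowingCones2} via the intermediate cone $\hat\cE^{cu}_{\alpha''}$ and the estimate $d(E_i(x),E_i(z))\to0$ as $\rho\to0$; and (b) the nonlinearity of $\exp_x$ on a ball of radius $\rho$, which contributes an error $O(\rho)$ to the identification of coordinate directions and so is killed by taking $\rho<\rho_1$ small. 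Once these are in hand, the Lipschitz-graph conclusion is the routine observation that a $C^1$-submanifold whose tangent spaces lie in a coordinate cone of width $\alpha$ around $\RR^u$, and which meets the central sub-cube, is a graph with Lipschitz constant $\le\alpha$ over the full $\RR^u$-range; I would phrase this as a short standalone remark rather than spelling out the $\eps$-$\delta$. The only genuine care needed is that the connectedness of $\cD^u$ plus the thinness of the cones forces the graph to be defined on all of $[0,1]^u$ and to take values in $[0,1]^s$, rather than exiting the cube through a side face.
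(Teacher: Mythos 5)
Your overall shape (cone tangency plus a standard Lipschitz--graph argument, plus the cube--crossing property from Section~\ref{ss.cubes} to get a graph over all of $[0,1]^u$) is the right one, but the key estimate is attributed to the wrong cone, and the containment you rely on is false for the perturbed map. You claim $E^u_g\subset\cE^u_\alpha$ and, in coordinates, that the tangent planes of $\cD^u$ lie in a width-$\alpha$ cone around $\RR^u$ (or around $\RR^s\oplus\RR^u$) ``after a harmless shrinking''. Neither holds: the narrow unstable cone $\cE^u_\alpha$ is \emph{not} $Dg$-invariant, because the elementary perturbations are designed precisely to rotate unstable tangent vectors into the center direction by an amount of order $\kappa$, while $\alpha$ is fixed much smaller than $\kappa$ (via Proposition~\ref{prop-variationLip}, $\alpha<\tfrac{\eta}{2}(\tfrac{\kappa}{20}-\tfrac{\theta}{2})$, and it is fixed \emph{before} $\rho$, so there is no freedom to shrink anything at this stage). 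The lemma must apply to every component of $\cW^u_g(x)\cap C^i_\rho$ with $x\in\Lambda_g$, in particular to the disks that have just passed through a rectangle $R_{i,j,k}$ --- exactly the disks $\cD^u(y)$ of Corollary~\ref{cor-nojointint}, whose center coordinates spread by at least $\kappa\eta/20$. If your stronger tangency claim were true, that spread would be impossible and the whole perturbation scheme would be vacuous. Remark~\ref{Remark-PersistencePH} only gives $E^u_g\subset\cE^u$, the cone of the fixed width $\varepsilon_0$ from Proposition~\ref{Proposition-Cones}, not of width $\alpha$.

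The correct mechanism --- and the reason the statement only concerns the projection to the $\RR^s\times\{0\}^c\times\RR^u$-plane --- is the $Dg$-invariance of the \emph{center-unstable} cone $\cE^{cu}_\alpha$ established in Proposition~\ref{Prop-NarrowingCones2} (a cone around $E^c\oplus E^u$, not around $E^s\oplus E^u$ as in your first paragraph). This invariance survives the perturbation precisely because the elementary perturbation has the form \eqref{eq:elementary}: it never moves the stable coordinate, so the angle between tangent vectors and the stable direction is controlled even though the angle with $E^u$ is not. Tangency of $\cD^u$ to $\cE^{cu}_\alpha$ gives $\|v^s\|\leq\alpha\|v^{c\oplus u}\|$ for tangent vectors $v$, while tangency to the wide cone $\cE^u$ (width $\varepsilon_0$) gives $\|v^{s\oplus c}\|\leq\varepsilon_0\|v^u\|$ and makes the disk a graph over the full $[0,1]^u$; combining, $\|v^s\|\leq\alpha(1+\varepsilon_0)\|v^u\|$, which is the $\alpha$-Lipschitz bound for the \emph{stable} component only, i.e. after projecting out the center. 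So you should keep your distortion bookkeeping (items (a) and (b)) and the graph/crossing argument, but replace the estimate ``tangent planes in a width-$\alpha$ cone around $\RR^u$'' by ``tangent planes in $\cE^{cu}_\alpha\cap\cE^u$'', which is what the paper's one-line proof is invoking.
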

\begin{proof}[\sc Proof]
{ Since $\cD^u$ is tangent to $\cE^{u}\subset \cE^u_{1/10}$, any vector $v=v^s+v^c+v^u$ tangent to $\cD^u$
satisfies $\|v^c\|\leq \frac 1 2 \|v^u\|$.} Since $\cE^{cu}_\alpha$ is preserved by $Dg$ the disk $\cD^u$ is tangent to $\cE^{cu}_\alpha$
{ hence $\|v^s\|\leq \alpha\|v^c+v^u\|\leq 2\alpha \|v^u\|$.}
\end{proof}


\subsection{Breaking the joint integrability}\label{ss:breakjoint}
In this subsection we show that if there are points in $M$ whose local unstable manifolds satisfy a certain configuration with respect to the rectangles $R_{i,j,k}$, then their unstable manifolds verify the conclusion of Theorem \ref{Thm-PerturbationResult}. First we prove this in coordinates, and then apply the results in the previous subsection to conclude the same for points in the manifold.

\begin{prop}\label{prop-variationLip}
For any $\theta,\eta,\kappa,{ \Delta}>0$, with { $\theta< \frac{\kappa}{1000 d2^d\Delta}$},
the following property { holds}:  In the coordinates $\RR^s \oplus \RR^c \oplus \RR^u$, let $\vartheta_1$ be the graph of a $\theta$-Lipschitz map $[0,\eta]^u\to [0,1]^{s+c}$  and let $ \vartheta_2, \vartheta_3$ be the graphs of {$\theta$}-Lipschitz maps ${ [0,10.2^{d}\eta\Delta]^s}\to [0,1]^{u+c}$ such that $\vartheta_2, \vartheta_3$ intersect $\vartheta_1$. Then there is no pair of points in $\vartheta_2 \cup \vartheta_3$ whose second coordinate differ by more than $\kappa \eta/20$.
\end{prop}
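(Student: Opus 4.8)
The plan is to work purely in the coordinate chart $\RR^s\oplus\RR^c\oplus\RR^u$ and to exploit the different ``directions of graph-ness'' of the three objects: $\vartheta_1$ is a graph over the $u$-coordinates (with tiny Lipschitz constant $\theta$ in the $s$ and $c$ directions), while $\vartheta_2,\vartheta_3$ are graphs over the $s$-coordinates (with Lipschitz constant $\alpha$ to be chosen, hence controlled in the $c$ and $u$ directions). First I would fix the intersection points: let $p_2\in\vartheta_1\cap\vartheta_2$ and $p_3\in\vartheta_1\cap\vartheta_3$. The key observation is that both $p_2$ and $p_3$ lie on $\vartheta_1$, so their $c$-coordinates are almost determined by their $u$-coordinates: since $\vartheta_1$ is $\theta$-Lipschitz over $[\tfrac14\eta,\tfrac34\eta]^u$, we get $\|p_2^c-p_3^c\|\le\theta\,\|p_2^u-p_3^u\|\le\theta\cdot\tfrac{\eta}{2}\sqrt{u}$, which is already of order $\theta\eta$.

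Next I would compare an arbitrary point $q\in\vartheta_2\cup\vartheta_3$ to the nearest of $p_2,p_3$. Say $q\in\vartheta_2$; then $q$ and $p_2$ are both on the $\alpha$-Lipschitz graph $\vartheta_2$ over $[0,1]^s$, so $\|q^c-p_2^c\|\le\alpha\,\|q^s-p_2^s\|\le\alpha\sqrt s$ and likewise $\|q^u-p_2^u\|\le\alpha\sqrt s$. Now take two points $q,q'\in\vartheta_2\cup\vartheta_3$; writing $q^c-q'^c$ as a telescoping sum through the relevant $p_i$'s, the $c$-displacement is bounded by the sum of: (a) the within-$\vartheta_2$/within-$\vartheta_3$ contributions, each $\le\alpha\sqrt s$; and (b) the $\|p_2^c-p_3^c\|$ term, which I would further bound using that $\vartheta_1$'s domain is $[\tfrac14\eta,\tfrac34\eta]^u$, giving $\le\theta\cdot\tfrac\eta2\sqrt u$. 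Since $\theta<\kappa/10$, the term in (b) is at most, say, $\tfrac{\kappa\eta}{40}\sqrt u$ — here I may need to absorb the dimensional constant $\sqrt u$ by strengthening the hypothesis slightly or by measuring coordinates in a normalised way, but morally it contributes well under $\kappa\eta/20$. Then choosing $\alpha>0$ small enough (depending on $\theta,\eta,\kappa$ and the dimensions, but \emph{not} on anything else) so that $2\alpha\sqrt s$ is smaller than the remaining slack, the total is $<\kappa\eta/20$, which is the claim.

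The only genuinely delicate point, and the one I would flag as the main obstacle, is bookkeeping the dimensional constants so that the clean inequality $\theta<\kappa/10$ really suffices: one must be careful whether ``second coordinate differ by'' refers to the Euclidean norm of the $\RR^c$-component or to a single coordinate, and whether the Lipschitz constants are with respect to $\ell^\infty$ or $\ell^2$ norms on the cubes. In dimension-$1$ center ($c=1$) these distinctions evaporate, and since the applications only use $c=1$ this is harmless; for general $c$ one should either state the hypothesis with the appropriate normalising factor or simply note that $\theta$ can be shrunk. Everything else is elementary: it is just the triangle inequality applied along the three graphs, using that each graph is Lipschitz-controlled in the complementary directions, together with the fact that $\vartheta_1$ has its domain squeezed into the small box $[\tfrac14\eta,\tfrac34\eta]^u$, which is exactly what converts a $\theta$-Lipschitz bound into an $O(\theta\eta)$ displacement bound. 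No use of the dynamics, partial hyperbolicity, or the perturbation construction is needed here — this is the pure ``coordinates'' lemma that Proposition \ref{Prop-NarrowingCones2} and Lemma \ref{Lema-NotChangesDistance} will later feed into.
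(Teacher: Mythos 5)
Your argument is essentially identical to the paper's: the paper's entire proof is the observation that the second coordinate along $\vartheta_1$ varies by at most $\theta\eta/2$ (since its domain $[\tfrac14\eta,\tfrac34\eta]^u$ has diameter $\eta/2$), after which one telescopes through the intersection points and chooses $\alpha<\frac{\eta}{2}\bigl(\frac{\kappa}{20}-\frac{\theta}{2}\bigr)$, exactly your triangle-inequality scheme. The dimensional factors $\sqrt{s},\sqrt{u}$ you flag are handled as you suspected: the paper implicitly measures distances in the cube coordinates so that the relevant diameters are $\eta/2$ and $1$ (sup-norm convention), so the clean hypothesis $\theta<\kappa/10$ suffices and no strengthening is needed.
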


\begin{proof}[\sc Proof] The second coordinate of {each} graph { $\vartheta_i$} has variation of at most
{ $10d2^{d}\eta\Delta\theta$}. Then, if one chooses { $\theta< \frac{\kappa}{1000 d2^d\Delta}$} the conclusion is verified.
\end{proof}

\begin{cor}\label{cor-nojointint}
For any $\theta,\eta,\kappa,{ \Delta}>0$, with { $\theta< \frac{\kappa}{1000 d2^d\Delta}$}, there is $\rho_2 \in (0,\rho_1)$ such that if $\rho<\rho_2$, if $x,y\in \Lambda_g$ belong to a cube $C^i_{\rho/3}$
{ with $d(x,y)\leq 2^{s+u}\rho \eta \Delta$}
and if there exists $R_{i,j,k}$ such that
\begin{itemize}
\item the connected component $\cD^u(x)$ of $W^u_{g}(x)\cap C^i_\rho$ containing $x$ intersect no rectangle $R_{i,j,k'}$ in $T_{i,j}$ and is contained in $\Lambda_g$, \item the connected component $\cD^u(y)$ of $\cW^u_{g}(y)\cap C^i_\rho$ containing $y$ intersects $\frac 1 2 R_{i,j,k}$,
\end{itemize}
then there exists $x'\in \cD^u(x)\cap T_{i,j}$ such that the connected component of $\cW^s_g(x')\cap C^i_{\rho}$ containing $x'$ is the graph of a map $[0,1]^s\to [0,1]^{c+u}$ in the coordinates $\psi^i_\rho$ and does not meet $\cD^u(y)$.
\end{cor}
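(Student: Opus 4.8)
The plan is to reduce to the coordinate statement of Proposition \ref{prop-variationLip} by controlling, in the $\psi^i_\rho$-chart, the geometry of the pieces of laminations involved. Given $\theta,\eta,\kappa$ with $\theta<\min\{\kappa/10,1/20\}$, first apply Proposition \ref{prop-variationLip} to obtain the associated $\alpha>0$; this $\alpha$ is the width of the $\cE^s$ and $\cE^{cu}$ cones that we will use, so we shrink $\rho_1$ (via Proposition \ref{Prop-NarrowingCones2}) to this value of $\alpha$ and then take $\rho_2\in(0,\rho_1)$ small enough that all the pieces of $\cW^u_g$ and $\cW^s_g$ through points of $C^i_{\rho/3}$ that are relevant stay inside $C^i_\rho$ and have the graph descriptions below. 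Throughout we read everything in the normalized coordinates $\psi^i_\rho$, where $\cE^{cu}_\alpha$ and $\cE^s_\alpha$ are (slightly enlarged versions of) cones around $\RR^{c+u}\times\{0\}$ and $\RR^s\times\{0\}$ respectively, $Dg$-invariant resp. $Dg^{-1}$-invariant by Proposition \ref{Prop-NarrowingCones2}.

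\emph{Step 1: graph descriptions.} By Lemma \ref{Lema-NotChangesDistance} (applied to both $x$ and $y$), the components $\cD^u(x)$ and $\cD^u(y)$ of $\cW^u_g\cap C^i_\rho$ are tangent to $\cE^{cu}_\alpha$, hence in the $\psi^i_\rho$-coordinates each projects onto the $\RR^s\times\{0\}^c\times\RR^u$-plane as the graph of an $\alpha$-Lipschitz map $[0,1]^u\to[0,1]^s$; writing the full disk as a graph over its $u$-coordinates gives an $\alpha$-Lipschitz map $[0,1]^u\to[0,1]^{s+c}$ — these play the role of $\vartheta_2,\vartheta_3$ in Proposition \ref{prop-variationLip} after swapping the roles of the $s$- and $u$-factors (which is harmless). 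Similarly, any component of $\cW^s_g(z)\cap C^i_\rho$ through a point $z\in C^i_{\rho/3}$ is tangent to $\cE^s_\alpha$ and so is the graph of an $\alpha$-Lipschitz map $[0,1]^s\to[0,1]^{c+u}$; this gives the graph assertion for $\cW^s_g(x')$ in the conclusion, for any $x'\in C^i_{\rho/3}$ (and we will ensure $x'$ lands there). We also use here that our thinness choice for $\cE^u$ in subsection \ref{ss.cubes} forces any such $\cD^u$ meeting $C^i_{\rho/3}$ to lie inside $(\psi^i_\rho)^{-1}([\tfrac14,\tfrac34]^{s+c}\times[0,1]^u)$, so in particular $\cD^u(x)$ meets the slice $T_{i,j}$ in points whose $(s,c)$-coordinates lie in $[\tfrac14\eta,\tfrac34\eta]^s\times[\tfrac14,\tfrac34]^c$ once we also know $\cD^u(x)$ passes through the right part of $T_{i,j}$.

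\emph{Step 2: locating $x'$ and applying the elementary perturbation.} Since $\cD^u(y)$ meets $\tfrac12 R_{i,j,k}$, its $u$-coordinates range over an interval containing the $u$-interval of $\tfrac12 R_{i,j,k}$, namely $[\tfrac14\eta,\tfrac34\eta]^u$ (translated by $v_{i,j,k}$); as $\cD^u(x)$ is a graph over the same $[0,1]^u$ and passes through $C^i_{\rho/3}$, by the thinness of $\cE^u$ it too meets the corresponding slab, and we pick $x'\in\cD^u(x)\cap T_{i,j}$ to be a point whose $(s,c,u)$-coordinates lie in the cube $[\tfrac14\eta,\tfrac34\eta]^s\times[\tfrac14,\tfrac34]^c\times[0,\eta]^u$ translated by $u_{i,j}+v_{i,j,k}$, i.e.\ so that the unstable disk $\cD^u:=\{x'^s\}\times\{x'^c\}\times[0,\eta]^u$ through $x'$ inside $R_{i,j,k}$ is one of the disks to which Lemma \ref{Lema-ElementaryPerturbation}(i) applies. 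Because $\cD^u(x)$ meets no rectangle $R_{i,j,k'}$ in $T_{i,j}$, the perturbation $g$ restricted to a neighborhood of $\cD^u(x)$ inside $C^i_\rho$ is, up to the chart, just $h^\eta_\kappa$ acting on $R_{i,j,k}$-type disks: concretely, the component of $\cW^u_g(y)\cap C^i_\rho$ and the relevant pieces are images under $h^\eta_\kappa$ of $f$-objects. By Lemma \ref{Lema-ElementaryPerturbation}(i), $h^\eta_\kappa(\cD^u)$ contains two points whose second (center) coordinates differ by more than $\kappa\eta/10>\kappa\eta/20$, and whose third coordinates stay in $[\tfrac14\eta,\tfrac34\eta]^u$; in particular the stable leaf $\cW^s_g(x')$, which contains (a graph over $[0,1]^s$ through) $h^\eta_\kappa$-image points, inherits from these the property of having center-coordinate variation $>\kappa\eta/10$ on the part of it we track — while $\cD^u(x)\cup\cD^u(y)$, being a $\theta$-or-$\alpha$-Lipschitz graph with $\theta<\kappa/10,1/20$, has center variation at most $\theta\eta/2<\kappa\eta/20$ over the relevant $u$-range.

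\emph{Step 3: applying Proposition \ref{prop-variationLip} to conclude disjointness.} We now have in coordinates: the graph $\vartheta_1$ of a $\theta$-Lipschitz map on $[\tfrac14\eta,\tfrac34\eta]^u$ coming from $\cW^s_g(x')$ near the $h^\eta_\kappa$-moved disk (its center-coordinate variation is controlled by the $\theta$-Lipschitz bound after the perturbation in the $u$-direction, which is the content feeding Proposition \ref{prop-variationLip}'s hypothesis), and the $\alpha$-Lipschitz graphs $\vartheta_2=\cD^u(x)$, $\vartheta_3=\cD^u(y)$ over $[0,1]^s$. If $\cW^s_g(x')$ did meet $\cD^u(y)$ inside $C^i_\rho$, then $\vartheta_1$ would be intersected by $\vartheta_3$ (and it is intersected by $\vartheta_2$ at $x'$), so Proposition \ref{prop-variationLip} would force every pair of points in $\vartheta_2\cup\vartheta_3$ to have second coordinates differing by at most $\kappa\eta/20$. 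But the two $h^\eta_\kappa$-image points produced in Step 2 lie on (or force onto) $\cD^u(y)$-side objects with center-coordinate gap $>\kappa\eta/10>\kappa\eta/20$, a contradiction. Hence the component of $\cW^s_g(x')\cap C^i_\rho$ through $x'$ does not meet $\cD^u(y)$, and by Step 1 it is a graph $[0,1]^s\to[0,1]^{c+u}$, as claimed.

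\emph{Main obstacle.} The delicate point is Step 2: matching precisely which points of the perturbed unstable disk the stable leaf $\cW^s_g(x')$ sees, and verifying that the ``center-coordinate separation $>\kappa\eta/10$'' produced by $h^\eta_\kappa$ genuinely lives on the leaves we compare, rather than being washed out by the freedom in choosing the graph representatives. Concretely one must (a) choose $x'$ so that its unstable disk inside $R_{i,j,k}$ is one of the disks of Lemma \ref{Lema-ElementaryPerturbation}(i), using the thinness of $\cE^u$ and the fact that $\cD^u(y)$ hits $\tfrac12 R_{i,j,k}$; and (b) transport the $h^\eta_\kappa$-separation statement, which is about images of coordinate unstable disks, into a statement about the Lipschitz graphs $\vartheta_1,\vartheta_2,\vartheta_3$ so that Proposition \ref{prop-variationLip} applies verbatim. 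Once the bookkeeping in coordinates is set up correctly, the rest is the quantitative triangle inequality already encapsulated in Propositions \ref{Prop-NarrowingCones2} and \ref{prop-variationLip}.
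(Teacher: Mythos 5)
Your reduction to Proposition \ref{prop-variationLip} inverts the roles of the objects, and this is not a harmless relabelling: it makes the key steps false. In the proposition, $\vartheta_1$ is an unstable-type object (a $\theta$-Lipschitz graph over a $u$-window of size $\eta/2$) and $\vartheta_2,\vartheta_3$ are stable-type objects ($\alpha$-Lipschitz graphs over $[0,1]^s$). In the corollary these are, respectively, the \emph{unperturbed} disk $\cD^u(x)\cap T_{i,j}$ and local stable components through suitable points of $\cD^u(x)\cap T_{i,j}$, while the \emph{perturbed} disk $\cD^u(y)$ is the object carrying the large center-coordinate separation. You instead take $\vartheta_2=\cD^u(x)$, $\vartheta_3=\cD^u(y)$ and for $\vartheta_1$ a piece of $\cW^s_g(x')$; but a local stable leaf is a graph over $[0,1]^s$, nearly parallel to the stable factor, and cannot be written as a $\theta$-Lipschitz graph over $[\frac14\eta,\frac34\eta]^u$, so the proposition does not apply to your $\vartheta_1$. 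Likewise, the $\alpha$-Lipschitz control of Lemma \ref{Lema-NotChangesDistance} concerns only the projection to the $s$-$u$ plane and says nothing about the center coordinate: $\cD^u(y)$ is \emph{not} an $\alpha$-Lipschitz graph into $[0,1]^{s+c}$ (inside $R_{i,j,k}$ its center slope is of order $\kappa\gg\alpha$ -- that is the whole point of the perturbation), so the hypotheses you feed into the proposition fail exactly where your contradiction needs them. Your Step 2 also contradicts the hypotheses of the corollary: you place $x'\in\cD^u(x)\cap T_{i,j}$ inside (the middle of) $R_{i,j,k}$, whereas the assumption is precisely that $\cD^u(x)$ meets no rectangle $R_{i,j,k'}$. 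The elementary perturbation is applied (via the $\kappa$-$C^1$-closeness to the identity) to the $h^\eta_\kappa$-preimage of $\cD^u(y)\cap T_{i,j}$, which crosses $\frac12R_{i,j,k}$, and the resulting separation $\geq\kappa\eta/10-\kappa\eta\theta>\kappa\eta/20$ lives on $\cD^u(y)$, not on $\cW^s_g(x')$ (which, being $\alpha$-Lipschitz over $[0,1]^s$, has center variation at most $\alpha$ -- the opposite of your claim that it ``inherits'' the separation). Finally, your assertion at the end of Step 2 that $\cD^u(x)\cup\cD^u(y)$ has center variation at most $\theta\eta/2$ directly contradicts the separation you just produced on $\cD^u(y)$, so the argument is internally inconsistent.

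You are also missing the ingredient that makes any $\theta$-Lipschitz statement true: Proposition \ref{Prop-NarrowingCones} combined with the wandering property of the slice (its first $N$ preimages are disjoint from all perturbation supports). This is what shows that $\cD^u(x)\cap T_{i,j}$, and the $h^\eta_\kappa$-preimage of $\cD^u(y)\cap T_{i,j}$, are $\theta$-Lipschitz graphs over the unstable coordinates of the slice, \emph{including} the center coordinate; the $\alpha$-cones of Proposition \ref{Prop-NarrowingCones2} only yield the stable-leaf graph property and the $s$-$u$ projection statement. The paper's argument then runs: take the two points of $\cD^u(y)\cap T_{i,j}$ with center gap $>\kappa\eta/20$, pick points $x_1',x_2'$ of the $\theta$-Lipschitz graph $\vartheta_1=\cD^u(x)\cap T_{i,j}$ over their unstable coordinates, and let $\vartheta_2,\vartheta_3$ be their stable components ($\alpha$-Lipschitz graphs over $[0,1]^s$ meeting $\vartheta_1$). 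Proposition \ref{prop-variationLip} bounds the center variation of $\vartheta_2\cup\vartheta_3$ by $\kappa\eta/20$; if both stable components met $\cD^u(y)$, the intersection points would have $u$-coordinates within $\alpha$ of those of the two separated points (a stable leaf moves the $u$-coordinate by at most $\alpha$), hence center coordinates forcing a gap larger than $\kappa\eta/20$ inside $\vartheta_2\cup\vartheta_3$, a contradiction; so one of $x_1',x_2'$ is the desired $x'$. This selection step is absent from your proposal, and without it (and with the role inversion above) the conclusion does not follow.
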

\begin{proof}[\sc Proof] Working in charts $\psi^i_\rho$ and using that the first $N$ preimages of the slice $T_{i,j}$ are disjoint from any other slice, one can use Proposition \ref{Prop-NarrowingCones} to obtain that
\begin{itemize}
\item $\cD^u(x)\cap T_{i,j}$ is the graph $\vartheta_1$ of a $\theta$-Lipschitz map  in the slice $T_{i,j}$ over the unstable coordinates,
\item $\cD^{u}(y)\cap T_{i,j}$ is the image by the elementary perturbation $h_\kappa^\eta$ of the graph of a $\theta$-Lipschitz graph  in the slice $T_{i,j}$ over the unstable coordinates.
\end{itemize}

Since $h_\kappa^\eta$ is $\kappa$-$C^1$-close to the identity and since $(h_\kappa^\eta)^{-1}(\cD^u(y))$ is the graph of a $\theta$-Lipschitz map, using Lemma \ref{Lema-ElementaryPerturbation} one obtains that the disk $\cD^u(y) \cap T_{i,j}$ has two points { $y'_2,y'_3$} whose center coordinate differ by at least $\frac{\kappa \eta}{10} - \kappa \eta \theta$, which is larger than $\frac{\kappa \eta}{20}$.
{ Moreover $y'_2,y'_3$ are at distance larger than $\rho\eta/4$ to the boundary of $T_{i,j}$.}

{ If one assumes by contradiction that the conclusion of the corollary is not satisfied,
the connected component of $\cW^s_g(y'_2)\cap C^i_{\rho}$ containing $y'_2$ intersects $\vartheta_1$ at a point $x'_2$.
Similarly there exists $x'_3\in \vartheta_1$ associated to $y'_3$.
Since $d(x,y)\leq 2^{s+u}\rho \eta \Delta$ and from Proposition \ref{Prop-NarrowingCones},
there exists $a\in [0,1]$ such that $x'_2,y'_2$ (resp. $x'_3,y'_3$)
belong to the graph $\vartheta_2$ (resp. $\vartheta_3$) of a $\theta$-Lipschitz function $[0,10.2^{d}\eta\Delta]^s\to [0,1]^{u+c}$.}

Now one can apply Proposition \ref{prop-variationLip} to obtain the { contradiction}.
\end{proof}

Now, the rest of the proof consists in being able to guarantee that for \emph{every} pair of points $x,y \in \Lambda_g$ in the same stable manifold at distance in $(r,r')$ there exists a forward iterate in a $C^i_{\rho/3}$ cube having the configuration given by the previous corollary. This will allow us to conclude.


\subsection{Coverings with bounded geometry}\label{ss:coverings}

Let $F$ be a subset of $U$ and $\xi >0$. We say that $F$ is a $\xi$-\emph{covering} of size $\rho$ of $U$ if the following holds:

\begin{itemize}
\item If $x,y \in U$ are at distance smaller than $\rho/4$ then there exists $z\in F$ such that $x,y \in C^z_{\rho/3}$.

\item For every $\eps>0$ and for every ball $B$ of radius $\eps$ intersecting $U$ we have that:
$$ \# \{ x \in F \ : \ C^x_{2\rho} \cap B \neq \emptyset \} \leq \xi \max \bigg\{1, \left(\frac{\eps}{\rho}\right)^d \bigg\} $$
\end{itemize}

\begin{lema}\label{Lema-BoundedGeometry}
There exists $\xi>0$ such that for any small enough $\rho$, there
exists a finite set $F$ which is a $\xi$-covering of size $\rho$
of $U$.
\end{lema}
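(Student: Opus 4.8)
The plan is to construct $F$ by a standard greedy/maximal separated-set argument adapted to the cube geometry. First I would fix a small scale $\rho$ and choose a maximal $\rho/8$-separated set $F\subset U$, i.e. a finite set of points of $U$ (finite since $U$ is compact) that are pairwise at distance at least $\rho/8$ and maximal with this property. Maximality gives that every point of $U$ lies within $\rho/8$ of some point of $F$, so if $x,y\in U$ satisfy $d(x,y)<\rho/4$ and $z\in F$ is within $\rho/8$ of $x$, then both $x$ and $y$ are within $\rho/4+\rho/8<\rho/3$ (after slightly adjusting constants, or by choosing the separation constant a bit smaller) of $z$; since the $\rho$-cube $C^z_\rho$ contains a metric ball of radius comparable to $\rho$ around $z$ — here one uses that the charts $\psi^z_\rho$ come from the exponential map composed with a linear isomorphism with controlled distortion, uniformly in $z\in U$ for $\rho$ small — one concludes $x,y\in C^z_{\rho/3}$. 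This verifies the first bullet; the precise numerical constant in the separation is chosen at the end once the distortion bound of the charts is fixed.

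Next I would verify the bounded-multiplicity second bullet. The key point is a volume/packing estimate: since the points of $F$ are $\rho/8$-separated, the balls $B(x,\rho/16)$, $x\in F$, are pairwise disjoint, and each has volume bounded below by $c_1\rho^d$ for a uniform constant $c_1>0$ depending only on $M$ (again by compactness and bounded geometry of $M$). Given a ball $B$ of radius $\eps$ meeting $U$, any $x\in F$ with $C^x_{2\rho}\cap B\neq\emptyset$ satisfies $d(x,\text{center of }B)\le \eps + C_2\rho$ for a uniform $C_2$ controlling the diameter of a $2\rho$-cube. Hence all such $B(x,\rho/16)$ are contained in a ball of radius $\eps+C_3\rho$, whose volume is at most $c_4(\eps+C_3\rho)^d$. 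Dividing, the number of such $x$ is at most $(c_4/c_1)\bigl((\eps+C_3\rho)/\rho\bigr)^d$, which is bounded by $\xi\max\{1,(\eps/\rho)^d\}$ for a uniform $\xi$ (distinguishing the cases $\eps\le\rho$ and $\eps>\rho$ and absorbing the additive $C_3\rho$ into the constant). Since $\xi$ depends only on $M$ and the chart distortion bounds, not on $\rho$, this gives the claim.

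I expect the only genuinely delicate point to be the uniform comparison, over all basepoints $x\in U$ and all small $\rho$, between the Euclidean cube structure transported by $\psi^x_\rho$ and the Riemannian metric on $M$: one needs that $C^x_{\rho/3}$ contains, and $C^x_{2\rho}$ is contained in, metric balls of radii comparable to $\rho$ with constants independent of $x$. This is where compactness of $M$ (or of $U$) and the uniform control on $\exp_x$ and on the extended bundles $E_1,E_2,E_3$ enter; it is routine but must be stated. Everything else — maximality giving the covering property, disjointness of small balls giving the packing bound — is soft. So my proof would be: (1) record the uniform chart-distortion estimate; (2) take a maximal $c\rho$-separated set $F$ for a suitable small $c$; (3) deduce the first bullet from maximality and the lower chart bound; (4) deduce the second bullet from disjointness of the balls $B(x,c\rho/2)$ and a volume comparison, reading off a uniform $\xi$.
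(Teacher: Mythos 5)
Your overall route is the same standard covering/packing argument as the paper's: the paper covers $U$ by finitely many unit-cube charts and takes a $\rho/8$-lattice in each, counting lattice points directly, while you take a maximal separated net and get the multiplicity bound from a volume/packing comparison. The second bullet is fine as you argue it: disjointness of the balls $B(x,\rho/16)$ plus the uniform volume comparability of small balls gives a bound $\xi\max\{1,(\varepsilon/\rho)^d\}$ with $\xi$ independent of $\rho$, which is the only feature that matters later (cf.\ Remark \ref{Remark-FixedValueXi}); your $\xi$ depends on the geometry of $M$ rather than only on $d$, but that is harmless.

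The genuine problem is in your verification of the first bullet, and it is not repaired by your proposed remedy of shrinking the separation constant. Membership $x,y\in C^z_{\rho/3}$ requires (up to chart distortion) that each coordinate of $x$ and $y$ relative to $z$ be at most $\rho/6$, since $C^z_{\rho/3}$ is the image of $[-\rho/6,\rho/6]^d$; in particular being within Riemannian distance $\rho/3$ of $z$ does \emph{not} imply lying in $C^z_{\rho/3}$ (the inscribed radius is about $\rho/6$), and also $\rho/4+\rho/8=3\rho/8>\rho/3$. More seriously, if you anchor $z$ at a net point near $x$, then $d(y,z)$ can be nearly $d(x,y)\approx\rho/4>\rho/6$ no matter how fine the net, so no choice of separation constant makes the step work. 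The correct fix is to choose $z$ near the \emph{midpoint} of $x$ and $y$: then each of $x,y$ is within $\rho/8+(\text{mesh})<\rho/6$ of $z$, coordinate by coordinate, once the mesh is, say, $\rho/24$ and $\rho$ is small enough for the chart distortion to be negligible. Note also that the midpoint need not lie in $U$, so the maximal separated set should be taken in a slightly enlarged compact neighborhood of $U$ on which the extended splitting and the cubes are defined (the paper's own lattice points are likewise not literally contained in $U$, only in the cubes covering it); this is exactly the point where the Lebesgue-number/enlargement step must be recorded. With these two corrections your argument goes through and is essentially equivalent to the paper's.
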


\begin{proof}[\sc Proof] We cover $U$ by cubes $C_1, \ldots, C_k$ of a sufficiently small scale so that there are charts $\psi_i : C_i \to [0,1]^s \times [0,1]^c \times [0,1]^u$. It is enough to cover each $C_i$ independently. This reduces the problem to the case where $U$ is the euclidean unit cube.

We therefore consider in $U$ the covering by  points in the $\rho/8$-lattice. For every pair of points at distance less than $\rho/4$ there is an element of the lattice such that its $\rho/3$-cube contains the pair of points.

Given a ball of radius $\eps$, it follows that its $2\rho$-neighbourhood has volume of order  $(\eps +2\rho)^d$ and bounded geometry. Therefore, this neighbourhood meets the lattice in a set with cardinal of the order $8 (\frac{\eps}{\rho} +2)$. This concludes.  \end{proof}

\begin{obs}\label{Remark-FixedValueXi}
In the previous lemma it is important that $\xi$ remains bounded as $\rho \to 0$. In fact, the value of $\xi$ only depends on the dimension of $M$.
\end{obs}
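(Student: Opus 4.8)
\emph{Strategy.} The plan is to reduce to a Euclidean model and to take for $F$ a maximal separated set at scale $\sim\rho$. Since $U$ is compact, cover it by finitely many charts $\psi_i\colon C_i\to[0,1]^d$ of uniformly bounded distortion, such that for a suitable $\rho_0>0$ every ball of radius $\rho_0$ centered at a point of $U$ lies in some $C_i$. As there are boundedly many charts, it suffices to carry out the construction inside each of them and take the union, at the cost of multiplying the multiplicity constant by a bounded factor; so we may assume $U=[0,1]^d$ with the Euclidean metric. There a $\rho'$--cube is $C^x_{\rho'}=x+O_x\bigl[-\tfrac{\rho'}2,\tfrac{\rho'}2\bigr]^d$ for some orthogonal $O_x$ (the one carrying the standard frame to $E_1(x)\oplus E_2(x)\oplus E_3(x)$), so that $B\bigl(x,\tfrac{\rho'}2\bigr)\en C^x_{\rho'}\en B\bigl(x,\tfrac{\rho'}2\sqrt d\,\bigr)$.

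\emph{Construction and the covering property.} In the model let $F\en[0,1]^d$ be a maximal $(\tfrac{\rho}{L})$--separated subset, with $L:=32$. It is finite since $[0,1]^d$ is compact, and by maximality it is $(\tfrac{\rho}{L})$--dense in $[0,1]^d$. If $x,y\in[0,1]^d$ with $|x-y|<\tfrac\rho4$, their midpoint $m$ lies in $[0,1]^d$ by convexity, so some $z\in F$ satisfies $|m-z|<\tfrac{\rho}{L}$; then $|x-z|,|y-z|<\tfrac\rho8+\tfrac{\rho}{L}=\tfrac{5\rho}{32}<\tfrac\rho6$, hence $x,y\in B\bigl(z,\tfrac\rho6\bigr)\en C^z_{\rho/3}$. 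This is the first requirement in the definition of a $\xi$--covering, and it only asked $L$ to be large.

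\emph{The multiplicity bound.} Let $\eps>0$ and let $B$ be a ball of radius $\eps$ with center $p$. If $z\in F$ satisfies $C^z_{2\rho}\cap B\neq\emptyset$, then $C^z_{2\rho}\en B(z,\rho\sqrt d)$ forces $|z-p|<\eps+\rho\sqrt d=:R$. The balls $B\bigl(z,\tfrac{\rho}{2L}\bigr)$, for such $z$, are pairwise disjoint by $(\tfrac\rho L)$--separation and are contained in $B\bigl(p,R+\tfrac{\rho}{2L}\bigr)\en B(p,R+\rho)$, so, comparing Euclidean volumes,
\[
\#\{\,z\in F:\ C^z_{2\rho}\cap B\neq\emptyset\,\}\ \le\ \frac{(R+\rho)^{d}}{\bigl(\tfrac{\rho}{2L}\bigr)^{d}}\ =\ (2L)^{d}\Bigl(\tfrac{\eps}{\rho}+\sqrt d+1\Bigr)^{d}\ \le\ \xi\,\max\Bigl\{1,\bigl(\tfrac{\eps}{\rho}\bigr)^{d}\Bigr\},
\]
with $\xi:=\bigl(2L(\sqrt d+2)\bigr)^{d}$, a constant depending only on $d$. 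Thus $F$ is a $\xi$--covering of size $\rho$ of the model cube. Returning to $M$ through the finitely many charts of bounded overlap and distortion multiplies $\xi$ only by a bounded, $\rho$--independent factor (the chart distortions and the curvature of $M$ at scale $\rho<\rho_0$ being under control), so $\xi$ stays uniform in $\rho$; choosing the charts almost isometric keeps it purely dimensional, which is the content of Remark~\ref{Remark-FixedValueXi}. (For $\eps$ comparable to or larger than the diameter of a chart one may instead use the trivial bound $\#F=O(\rho^{-d})$, since then $(\eps/\rho)^d$ is of that order as well.)

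\emph{Main difficulty.} There is no genuine obstacle here: this is a standard packing/covering estimate. The two points that need care are (i) the uniformity, over the compact set $U$, of the ``almost Euclidean at scale $\rho$'' comparisons --- this is what lets a single constant $L$ and a single threshold $\rho_0$ serve throughout; and (ii) that the sets $C^x_\rho$ are boxes adapted to the (non--invariant) splitting $E_1\oplus E_2\oplus E_3$ rather than metric balls, so that one must argue with their inscribed and circumscribed balls. Both disappear once one reduces to the Euclidean cube, which is therefore the convenient first step.
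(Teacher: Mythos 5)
Your proof is correct and follows essentially the same route as the paper: reduce to the Euclidean unit cube via finitely many charts, take a $\sim\rho$-dense, $\sim\rho$-separated net, verify the covering property from density, and bound the multiplicity by a volume/counting estimate that yields a purely dimensional $\xi$. The only (cosmetic) difference is that you use a maximal $\rho/32$-separated set with a ball-packing comparison where the paper uses the explicit $\rho/8$-lattice and counts lattice points in the $2\rho$-neighborhood of the ball.
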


\begin{lema}\label{Lema-BoundedIntersectionIterates}
For every $N, \Delta, \xi>0$ there exist an integer $I:= I(N,\Delta,\xi)$ and $\rho_3>0$ with the following property. If $h:M \to M$ is a $C^1$-diffeomorphism with $\sup_{x\in U} \|D_x h\|< \Delta$  and if $F$ is a $\xi$-covering of size $\rho<\rho_3$ of $U$ then, for every $x \in F$, one has
$$ \# \{ y \in F \ : \ C^y_{2\rho} \cap  (C^x_{\rho} \cup h(C^x_\rho) \cup \ldots \cup h^N(C^x_\rho)) \neq \emptyset \} < I . $$
\end{lema}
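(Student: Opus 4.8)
The plan is to prove Lemma~\ref{Lema-BoundedIntersectionIterates} by controlling the volume of the region $\bigcup_{\ell=0}^{N} h^\ell(C^x_\rho)$ and then invoking the bounded-geometry property of the covering $F$. First I would observe that since $\sup_{x\in U}\|D_xh\|<\Delta$, each iterate $h^\ell$ is $\Delta^\ell$-Lipschitz, so the image $h^\ell(C^x_\rho)$ is contained in a ball of radius at most $C_d\,\Delta^\ell\rho$ (where $C_d$ accounts for the diameter of the $\rho$-cube in terms of $\rho$, independent of the center). Hence the full union $\bigcup_{\ell=0}^{N} h^\ell(C^x_\rho)$ is covered by $N+1$ balls each of radius at most $\eps_0:=C_d\,\Delta^N\rho$.

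The key step is then to apply the second defining property of a $\xi$-covering of size $\rho$. For each of the $N+1$ balls $B_\ell$ of radius $\eps_0\le C_d\Delta^N\rho$ appearing above, that property gives
\begin{equation*}
\#\{\,y\in F : C^y_{2\rho}\cap B_\ell\neq\emptyset\,\}\;\le\;\xi\max\Bigl\{1,\bigl(\tfrac{\eps_0}{\rho}\bigr)^{d}\Bigr\}\;\le\;\xi\,(C_d\Delta^N)^{d}.
\end{equation*}
Strictly speaking the property as stated bounds the number of $C^y_{2\rho}$ meeting a ball $B$ of radius $\eps$; since $C^x_\rho$, $h(C^x_\rho),\dots,h^N(C^x_\rho)$ are each inside such a ball, a cube $C^y_{2\rho}$ that meets $C^x_\rho\cup h(C^x_\rho)\cup\cdots\cup h^N(C^x_\rho)$ must meet at least one of the $B_\ell$. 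Summing over the $N+1$ balls,
\begin{equation*}
\#\{\,y\in F : C^y_{2\rho}\cap(C^x_\rho\cup h(C^x_\rho)\cup\cdots\cup h^N(C^x_\rho))\neq\emptyset\,\}\;<\;(N+1)\,\xi\,(C_d\Delta^N)^{d}=:I(N,\Delta,\xi).
\end{equation*}
This $I$ depends only on $N$, $\Delta$, $\xi$ (and the fixed dimension $d$), as required; and the estimate holds for every $x\in F$.

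The only mild subtlety, and the step I expect to need the most care, is making sure the bounded-geometry inequality can be applied \emph{uniformly in $\rho$}: the constant $C_d$ bounding $\operatorname{diam}(C^x_\rho)/\rho$ must be independent of $\rho$ and of the center, which holds because the $\rho$-cubes are defined through the exponential chart rescaled by $\rho^{-1}$ and $M$ is compact, so for $\rho<\rho_3$ small enough the exponential map and the metric are uniformly comparable to the Euclidean model on each $C^x_{2\rho}$. Once $\rho_3$ is chosen small enough that this comparison holds (and small enough to be below the $\rho_3$ implicitly needed for the covering to exist via Lemma~\ref{Lema-BoundedGeometry}), the argument above goes through verbatim. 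Note that $I$ does not depend on the particular diffeomorphism $h$, only on the bound $\Delta$ on its derivative, which is exactly what is needed later when $h=g$ ranges over the perturbations of $f$.
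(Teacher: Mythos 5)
Your proof is correct and follows essentially the same argument as the paper: cover $\bigcup_{\ell=0}^{N}h^\ell(C^x_\rho)$ by $N+1$ balls of radius of order $\Delta^N\rho$ and apply the second (bounded-geometry) property of a $\xi$-covering to each ball, yielding $I$ of order $(N+1)\xi\Delta^{Nd}$ up to a dimensional constant. Your extra care about the uniform comparability of the exponential charts for small $\rho$ is exactly what the paper's phrase ``if $\rho$ is small enough'' is implicitly covering.
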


\begin{proof}[\sc Proof]  If $\rho$ is small enough we know that $h^i(C^x_{\rho})$ is contained in a ball of radius $\Delta^i \rho$. Choosing $I \geq  (N+1) \xi  \max\{1,\Delta^{Nd}\}$ one obtains the desired bound. \end{proof}


\subsection{Wandering slices}\label{ss:wandering}

In this section we prove the following proposition which can be compared to Lemma 2.3 of \cite{DW}.

\begin{prop}\label{Prop-WanderingRegions}
Given $N>0$, there exists $\hat \eta:= \hat \eta(N,\Delta, \xi)>0$ and $\rho_4 \in (0,\rho_3)$ such that for every $\xi$-covering $F$ of $U$ of size $\rho< \rho_4$ and for every $x \in F$ there exists a slice $\hat T_x$ of width $\hat \eta$ inside $C_\rho^x$
intersecting $C^x_{\rho/2}$ and such that
the sets in $\{f^{-\ell}(\hat T_x) \ : \ x\in F \ , \ 0 \leq \ell \leq N \}$ are pairwise disjoint.
\end{prop}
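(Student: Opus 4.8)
\textbf{Plan for the proof of Proposition~\ref{Prop-WanderingRegions}.}
The plan is to produce, for each $x\in F$, a slice inside $C_\rho^x$ by a counting (pigeonhole) argument: among a large family of candidate parallel slices of small width inside $C_\rho^x$, most will have the property that their first $N$ backward $f$-iterates avoid the iterates of all the other candidate slices. First I would fix, inside the chart $\psi^x_\rho$, a family of $m$ disjoint parallel slices $S^x_1,\dots,S^x_m$ of width $\eta=1/m$, each of the form $(\psi^x_\rho)^{-1}([0,1]^{s+c}\times[q\eta,(q+1)\eta]^u)$, chosen so that at least one of them, say those with index $q$ in a fixed middle range, meets $C^x_{\rho/2}$ (this only requires $m$ large, hence $\rho$-independent; it restricts $\eta$ to be small depending on geometry, not on $\rho$). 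The point is that these slices are pairwise disjoint, so their $f$-preimages are pairwise disjoint for each fixed $x$; the issue is only the collisions \emph{between} different base points $x,y\in F$ and across the $N$ iterates.

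Next I would estimate how many of these candidate slices can be ``spoiled''. Fix $x\in F$. A candidate slice $S^x_q\subset C_\rho^x$ is spoiled if for some $y\in F$, some $q'$, and some $0\le \ell,\ell'\le N$, the sets $f^{-\ell}(S^x_q)$ and $f^{-\ell'}(S^y_{q'})$ intersect; equivalently $S^x_q\cap f^{\ell-\ell'}(S^y_{q'})\neq\emptyset$, so with $k=\ell-\ell'\in[-N,N]$ it suffices to control $S^x_q\cap f^{k}(S^y_{q'})$ for $|k|\le N$. By Lemma~\ref{Lema-BoundedIntersectionIterates} (applied to $h=f$, with the bound $\Delta$ from Section~\ref{ss.initial}), for $\rho<\rho_3$ the number of $y\in F$ whose cube $C^y_{2\rho}$ meets $C^x_\rho\cup f(C^x_\rho)\cup\dots\cup f^N(C^x_\rho)$, and symmetrically the number reachable by backward iterates, is bounded by a constant $I=I(N,\Delta,\xi)$; since each such $y$ contributes at most $m$ candidate slices $S^y_{q'}$ and each of these, being contained in $C^y_\rho$ of diameter $\lesssim\rho$, has $f^k$-image ($|k|\le N$) of diameter $\lesssim \Delta^N\rho$ that can meet at most a bounded number (depending on $\Delta^N$) of the $m$ parallel slabs $S^x_q$, the total number of spoiled indices $q$ is bounded by a constant $J=J(N,\Delta,\xi)$ independent of $\rho$.

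Finally I would choose $m=m(N,\Delta,\xi)$ large enough that $m$ exceeds $J$ plus the number of ``forbidden'' indices $q$ that fail to meet $C^x_{\rho/2}$, so that for every $x\in F$ there remains at least one admissible index $q(x)$; set $\hat\eta=1/m$ and $\hat T_x=S^x_{q(x)}$, and let $\rho_4\in(0,\rho_3)$ be small enough that all the geometric estimates above (the diameter bounds for iterates of $\rho$-cubes, the fact that a slice of width $1/m$ in $C^x_\rho$ genuinely meets $C^x_{\rho/2}$, and the chart distortion bounds) hold. By construction the family $\{f^{-\ell}(\hat T_x):x\in F,\ 0\le\ell\le N\}$ is pairwise disjoint. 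The main obstacle is bookkeeping: making sure the counting constant $J$ genuinely does not depend on $\rho$ — this is exactly where the bounded-geometry property of the $\xi$-covering (Lemma~\ref{Lema-BoundedGeometry}, Remark~\ref{Remark-FixedValueXi}) and the uniform bound $\Delta$ on $\|Df^{\pm1}\|$ are essential, since without them the number of cubes $C^y_{2\rho}$ meeting a bounded-diameter iterate, or the number of slabs hit by an iterated slice, could blow up as $\rho\to0$.
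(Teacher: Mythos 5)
There is a genuine gap, and it lies in the very definition of ``spoiled'' together with the counting that is supposed to make the pigeonhole work. You declare $S^x_q$ spoiled if its backward iterates meet the backward iterates of \emph{any candidate} slice $S^y_{q'}$ of \emph{any other} point $y\in F$. But the $m$ candidate slices of a given $y$ tile the whole cube $C^y_\rho$, and the cubes of a $\xi$-covering overlap heavily (by the first condition in the definition, points of $F$ must be roughly $\rho/3$-dense, and in the lattice construction of Lemma~\ref{Lema-BoundedGeometry} they are $\rho/8$-spaced). Taking $\ell=\ell'=0$ and $y\neq x$ with $C^y_\rho$ overlapping $C^x_\rho$, every candidate $S^x_q$ meeting $C^y_\rho$ meets some $S^y_{q'}$, so essentially \emph{every} candidate slice of $x$ is spoiled and no admissible choice remains. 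Note that you cannot simply discard the case $\ell=\ell'$: the statement does require $\hat T_x\cap\hat T_y=\emptyset$ for $x\neq y$, which is exactly why the selections at overlapping cubes must be coordinated rather than made independently, one $x$ at a time against \emph{all} candidates of its neighbours. The quantitative step also fails: a set of diameter $\lesssim\Delta^N\rho$ can cross up to all $m$ slabs of width $\rho/m$ once $m\gtrsim\Delta^N$, so ``meets at most a bounded number of the $m$ slabs'' is not a consequence of the diameter bound (it would require the image to remain slab-like of width comparable to $\eta$, which is where near-linearity and the width of the slice, not its diameter, must be used); and even with the correct per-image bound $(2\Delta^N)^u$, your total count of spoiled indices is at most $I\,(2N+1)\,m\,(2\Delta^N)^u$, which grows linearly in $m$ because each relevant $y$ contributes all of its $m$ candidates. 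Hence $J$ is not independent of the number of candidates, and ``choose $m>J$'' is circular.

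The paper avoids both problems by making the choice greedy: fix $\hat\eta=\tfrac{1}{8I\Delta^N}$ (with $I$ from Lemma~\ref{Lema-BoundedIntersectionIterates}) and select the slices inductively over $F=\{x_1,\dots,x_m\}$. When choosing $\hat T_{x_{k+1}}$ one only needs to avoid the relevant iterates of the slices \emph{already chosen}; by Lemma~\ref{Lema-BoundedIntersectionIterates} at most $I$ of them have an iterate meeting $C^{x_{k+1}}_\rho$, and each such iterate of a single thin slice of width $\hat\eta$ excludes at most $(2\Delta^N)^u$ of the at least $(4\hat\eta)^{-u}$ candidates sitting in the middle third of the cube (all of which meet $C^{x_{k+1}}_{\rho/2}$). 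Since $I\,(2\Delta^N)^u<(4\hat\eta)^{-u}$ by the choice of $\hat\eta$, a good candidate survives at every step. Your use of the bounded-geometry constant $\xi$, of $\Delta$, and of the near-linearity at scale $\rho<\rho_4$ is the right ingredient list, but the selection must be sequential (or otherwise globally coordinated), with the excluded set at each step determined only by the finitely many slices already fixed; as written, your simultaneous, per-point selection cannot produce even one admissible slice.
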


We will call \emph{wandering slices} the sets $\hat T_x$ with $x\in F$ .

\begin{proof}[\sc Proof]
Consider $\hat \eta = \frac{1}{8I\Delta^N}$, where $I$ is chosen according to Lemma~\ref{Lema-BoundedIntersectionIterates}.
By choosing $\rho_4$ small enough one can work as if $f$ were linear inside each cube of the $\xi$-covering modulo a small error.

We perform an induction argument on the set $F= \{x_1, \ldots, x_m \}$.
Let $0\leq k\leq m$. Assume that we have chosen $\hat T_{x_i}$ for $i \leq k$ satisfying the above properties.
(When $k=0$ there is no condition.) We have to build $\hat T_{x_{k+1}}$.

Let $\cS_{k+1}$ be the set of slices of the form
$[0,1]^{s+c} \times (u_{x_{k+1}} + [-\frac{\hat \eta}{2},\frac{\hat \eta}{2}]^u)$,
with $u_{x_{k+1}} \in \hat \eta(\ZZ^u \cap [\frac 1 {3\hat \eta}, \frac 2 {3\hat \eta}]^u)$.
By definition these slices intersect $C^x_{\rho/2}$.
Moreover, there are at least $(4\hat \eta)^{-u}$ such slices.  We want to choose $\hat T_{x_{k+1}}$
in the set $\cS_{k+1}$.

From Lemma~\ref{Lema-BoundedIntersectionIterates},
one sees that among the $\rho$-cubes centered at points of $F$, there are at most $I$ which intersect
$C^{x_{k+1}}_\rho\cup f^{-1}(C^{x_{k+1}}_\rho)\cup\dots\cup f^{-N}(C^{x_{k+1}}_\rho)$.
In particular at most $I$ slices $\hat T_{x_i}$, $1\leq i\leq k$, have an iterate by $f^\ell$, $0\leq \ell\leq N$
which intersects $C^{x_{k+1}}_\rho$.
Using the fact that at this scale all first $N$-iterates of $f$ are almost linear
and that $\sup_{x\in U} \|D_x f\|< \Delta$,
each iterate $f^\ell(\hat T_{x_i})$ (with $1 \leq \ell \leq N$) can intersect at most
$(2\Delta^N)^u$ slices in $\cS_{k+1}$.
Since $I\cdot (2\Delta^N)^u<(4\hat \eta)^{-u}$ by the choice of $\hat \eta$,
there exists $\hat T_{x_{k+1}}\in \cS_{k+1}$ with the desired properties.
\end{proof}


\subsection{A sparse section for the unstable direction}\label{ss:section}

In this subsection we prove a geometric/combinatorial result which prepares the choice of rectangles in the next subsection.

\begin{prop}\label{Prop-Colorings} There exist $L:= L(\Delta,s)$ and a family $\cC$ of tiles in $\RR^s \times [0,L]^u$
{ such that if $\alpha<1/(20L)$ then the following properties hold:}

\begin{itemize}
\item[(i)] Each tile $Q \in \cC$ is of the form $(a,b) + [0,1]^s \times [0,1]^u$ with $a\in \RR^s$ and $b \in \{0, \ldots, L-1\}^u$.

\item[(ii)] Given two tiles $Q,Q' \in \cC$  in $\RR^s \times (b + [0,1]^u)$ with $b \in \{0, \ldots, L-1\}^u$ we have that $d(Q,Q') >3^{s+u}\Delta$.

\item[(iii)] If $\cD$ is the graph of a {$2\alpha$}-Lipschitz function $[0,L]^u \to \RR^s$ then there exists $Q \in \cC$ such
that $\cD \cap \frac{1}{2}Q\neq\emptyset$.
\end{itemize}
\end{prop}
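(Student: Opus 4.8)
The plan is to realise $\cC$ as a ``lattice of tiles'' inside each layer $\RR^s\times(b+[0,1]^s\text{-levels})$ — coarse enough (mesh of order $3^{s+u}\Delta$) to guarantee the separation (ii) — together with only finitely many layers $b$, chosen so that the $\RR^s$-positions of the tiles, taken over all these layers, are finely equidistributed modulo that coarse lattice. The one point that needs care is that all tiles must be squeezed into a \emph{tiny corner} of the $\RR^u$-height box $[0,L]^u$, so that the hypothesis $\alpha<\tfrac1{4L}$ in (iii) forces the graph $\cD$ to be essentially constant over the relevant layers; then $\cD$ automatically threads one well-placed tile.

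Concretely, I would first fix $M_0:=\lceil 3^{s+u}\Delta\rceil+2$ and $\Gamma:=M_0\ZZ^s\subset\RR^s$. Any two distinct points of a coset of $\Gamma$ are at $\ell^\infty$-distance $\geq M_0$, so two distinct unit tiles $(a,b)+[0,1]^s\times[0,1]^u$ and $(a',b)+[0,1]^s\times[0,1]^u$ sitting in the \emph{same} layer, with $a-a'\in\Gamma$, are at Euclidean distance $\geq M_0-1>3^{s+u}\Delta$; this gives (ii). Next I would cover the $\ell^\infty$-torus $\RR^s/\Gamma$ by $K:=\lceil 5M_0\rceil^s$ closed balls of radius $\tfrac1{10}$ and fix representatives $\tau_1,\dots,\tau_K\in\RR^s$ of their centres. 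Take $L$ to be any integer with $L\geq 100K$ (in particular $L\geq K$); it depends only on $\Delta$ and the dimensions. For $m\in\{0,\dots,K-1\}$ set $b^{(m)}:=(m,0,\dots,0)\in\{0,\dots,L-1\}^u$, and declare $\cC$ to consist, for each such $m$ and each point $p\in\tau_{m+1}+\Gamma$, of the unique tile $(a,b^{(m)})+[0,1]^s\times[0,1]^u$ whose $\RR^s$-slice is centred at $p$, i.e.\ $a=p-(\tfrac12,\dots,\tfrac12)$. Properties (i) and (ii) are then immediate.

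For (iii), let $\phi\colon[0,L]^u\to\RR^s$ be $\alpha$-Lipschitz with $\alpha<\tfrac1{4L}$ and let $\cD$ be its graph. For $m\in\{0,\dots,K-1\}$ put $y_m:=b^{(m)}+(\tfrac12,\dots,\tfrac12)=(m+\tfrac12,\tfrac12,\dots,\tfrac12)\in[0,L]^u$ and $v_m:=\phi(y_m)$. Since $y_m$ and $y_{m'}$ differ only in the first coordinate, $\|v_m-v_{m'}\|_\infty\leq\|v_m-v_{m'}\|_2\leq\alpha|m-m'|<\tfrac{K-1}{4L}<\tfrac1{400}$, so all the $v_m$ lie in the $\ell^\infty$-ball of radius $\tfrac1{400}$ about $v_0$. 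The class of $v_0$ in $\RR^s/\Gamma$ lies in one of the covering balls, say the one about $\tau_{m_0+1}$; choose $\kappa\in\ZZ^s$ with $\|v_0-(\tau_{m_0+1}+M_0\kappa)\|_\infty\leq\tfrac1{10}$, whence $\|v_{m_0}-p\|_\infty<\tfrac1{400}+\tfrac1{10}<\tfrac14$ where $p:=\tau_{m_0+1}+M_0\kappa$. Let $Q\in\cC$ be the tile in layer $b^{(m_0)}$ whose $\RR^s$-slice is centred at $p$. Then $v_{m_0}\in p+(-\tfrac14,\tfrac14)^s$ and $y_{m_0}\in b^{(m_0)}+[\tfrac14,\tfrac34]^u$, so the point $(\phi(y_{m_0}),y_{m_0})\in\cD$ lies in $\tfrac12 Q$; hence $\cD\cap\tfrac12 Q\neq\emptyset$, which is (iii).

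The genuinely delicate issue — and the reason for squeezing the tiles into a small corner — is that one is tempted to index tiles by the whole grid $\{0,\dots,L-1\}^u$; but for an $\alpha$-Lipschitz $\phi$ the heights $\phi(y_b)$ may still drift by as much as $\sim\sqrt u\,\alpha L\sim\sqrt u/4$ across that grid, which is of the same order as, in fact larger than, the $\tfrac14$ tolerance one needs in order to hit $\tfrac12 Q$, and the argument collapses. Restricting all tiles to the first $K$ ``columns'' of layers, with $K$ bounded in terms of $\Delta$ and the dimensions while $L$ is taken much larger, is precisely what converts $\alpha<\tfrac1{4L}$ into the usable flatness bound $\|v_m-v_{m'}\|_\infty<\tfrac1{400}$ above; everything else is routine bookkeeping with the constants. (If one prefers the tiles to be spread more evenly over the layers, one can instead restrict to a small sub-cube $\{0,\dots,K'-1\}^u$ with $K'^u\geq K$ and let the assignment $b\mapsto\tau_{\bullet}$ factor through a surjection onto $\{1,\dots,K\}$, but the column version above is the leanest route.)
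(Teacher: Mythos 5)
Your construction is correct: the coarse lattice $\Gamma=M_0\ZZ^s$ gives (ii), the points $\tau_1,\dots,\tau_K$ form a $\tfrac1{10}$-net of $\RR^s/\Gamma$, and because the layers you actually use are confined to the short column $b^{(m)}=(m,0,\dots,0)$, $0\le m\le K-1$ with $K\le L/100$, the hypothesis $\alpha<\tfrac{1}{4L}$ yields the strong flatness $\|v_m-v_{m'}\|_\infty<\tfrac1{400}$, which indeed forces the graph to meet $\tfrac12Q$ for a well-chosen tile; properties (i)--(iii) are all verified, and the layer structure still supports the subsequent Corollary \ref{Cor-TwoDisksAndRcoloring}, so nothing downstream breaks. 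The paper's proof uses the same underlying mechanism (periodic translates within a layer for the separation, finitely many translate classes realizing a fine net modulo the coarse lattice) but arranges it differently: it distributes the translate classes over \emph{all} layers $b\in\ZZ^u\cap[0,L-1]^u$, taking one tile of a $\tfrac18$-covering $\cN$ of a fundamental domain per layer and repeating it $5^{s+u}\Delta$-periodically in $\RR^s$; it first proves the auxiliary property that every vertical segment $\{x\}\times[0,L]^u$ meets $\tfrac14 Q$ for some tile, and then deduces (iii) from the fact that a graph with Lipschitz constant $\alpha<\tfrac{1}{4L}$ stays within distance about $\tfrac14$ of a vertical plane over the whole box. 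Your remark that the ``whole grid'' scheme collapses is overstated: the drift over $[0,L]^u$ is at most $\tfrac14$ in the sup-norm convention (and the Euclidean factor $\sqrt u$ you point out, as well as the margin between $\tfrac14Q$ and $\tfrac12Q$, only affect the constant in the threshold on $\alpha$ or the definition of the shrunken tile, not the structure of the argument), so the paper's route survives with minor adjustments of constants. Still, your clustering of the used layers is a genuine and worthwhile variation: it decouples the flatness actually needed (over distance $K-1$, giving $\tfrac{K-1}{4L}<\tfrac1{400}$) from the size $L$ of the box, so the argument is insensitive to the choice of norm and to the exact tolerances, at the mild cosmetic cost of leaving most layers empty.
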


As before, if $Q= a+ [0,1]^s \times [0,1]^u$ we denote by $\frac{1}{2}Q$ the set $a + [\frac{1}{4},\frac{3}{4}]^s \times [\frac{1}{4},\frac{3}{4}]^u$ .

\begin{proof}[\sc Proof] Choosing large enough $L$ it is possible to construct a familly $\cC$ of tiles satisfying properties (i) and (ii) and with property

\begin{itemize}
\item[(iii)']  for every $x\in \RR^s$ there exists $Q \in \cC$ such that
$$\{x\} \times [0,L]^u \cap \frac 1 4 Q \neq \emptyset.$$
\end{itemize}

To see this, consider a finite covering $\cN$ of the ball of radius $10^{s+u}\Delta$ by tiles of side $\frac 1 8$. We can choose $\cN$ to have less than $(800^{s+u} \Delta)^s$ tiles.

Choose $L$ so that  $L \geq (800^{s+u} \Delta)^s$. For each $b \in \ZZ^u \cap [0,L-1]^u$ we associate a tile in $\cN$ so that every tile corresponds to at least one $b$.  For each such $b$, choose $a \in \RR^s$ to be the lower corner of the corresponding tile.

Let $\cC_b$  be the set of tiles of the form $(a + 5^{s+u}\Delta k, b) +  [0,1]^s\times [0,1]^u$ with $k \in \ZZ^s$. This ensures that for every $x\in \RR^s$ there exists $b \in \ZZ^u \cap [0,L-1]^u$ and $Q \in \cC_b$ such that $\{x\} \times [0,L]^u \cap \frac 1 4 Q \neq \emptyset$ giving property (iii)'. The
fact that it verifies properties (i), (ii) is direct from the definition of $\cC_b$.

If $\alpha { < \frac{1}{20L}}$ the {$2\alpha$}-Lipschitz disks as in property (iii) are at distance smaller than {$\frac 1 8$} from disks as in property (iii)', so property (iii) will also be verified.  \end{proof}

\begin{cor}\label{Cor-TwoDisksAndRcoloring}
In the conditions of Proposition \ref{Prop-Colorings}, let $\cD,\cD'$ be { $2\alpha$}-Lipschitz unstable disks and $\cD^s$ a { $1/2$}-Lipschitz stable disk intersecting $\cD$ and $\cD'$ at points $x,y$ such that $d_{s}(x,y)  \in [2^{s+u},2^{s+u}\Delta]$. Then, there is $b\in \ZZ^u \cap [0,L-1]^u$ such that $\cD$ intersects $\frac{1}{2}Q$ for some $Q \in \cC_b$ whereas $\cD'$ does not intersect any tile in $\cC_b$.
\end{cor}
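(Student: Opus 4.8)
\textbf{Proof plan for Corollary \ref{Cor-TwoDisksAndRcoloring}.}

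The plan is to feed the stable disk $\cD^s$ into property (iii) of Proposition \ref{Prop-Colorings} after arranging the geometry so that the two endpoints $x,y$ land in a $\frac12 Q$-tile and a region avoiding $\cC_b$ respectively. First I would apply an affine rescaling/rotation of $\RR^s\times\RR^u$ so that the stable direction at $x$ becomes aligned with the $\RR^s$-factor: since $\cD^s$ is an $\alpha$-Lipschitz stable disk, after this normalization $\cD^s$ is (close to) the graph of an $\alpha'$-Lipschitz map $[0,L]^u\to\RR^s$ for a slightly worse Lipschitz constant $\alpha'$, still below $\frac{1}{4L}$ once $\alpha$ was chosen small; note the scale is arranged so that the length $d_{\cD^s}(x,y)\in[2^{s+u},2^{s+u}\Delta]$ makes $y$ sit at roughly unit $\RR^s$-coordinate distance from $x$ inside the slab $\RR^s\times[0,L]^u$. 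The point of the lower bound $2^{s+u}$ in the hypothesis is precisely that $x$ and $y$ are far enough apart (more than one tile side, after rescaling) that they cannot both sit in tiles belonging to the \emph{same} color class $\cC_b$; the upper bound $2^{s+u}\Delta$ guarantees they stay inside the finite slab $\RR^s\times[0,L]^u$ on which $\cC$ was built.

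Next I would invoke property (iii) (or (iii)') of Proposition \ref{Prop-Colorings} at the point $x$: since the $\RR^u$-fiber through $x$, truncated to $[0,L]^u$, meets $\frac14 Q$ for some tile $Q$, and since an $\alpha$-Lipschitz unstable disk $\cD$ through $x$ stays within distance $\frac14$ of that fiber, we get $\cD\cap\frac12 Q\neq\emptyset$ for some $Q\in\cC_b$ with $b$ the corresponding element of $\ZZ^u\cap[0,L-1]^u$. This fixes the color class $\cC_b$. Now the separation estimate (ii), $d(Q,Q')>3^{s+u}\Delta$ for distinct tiles in a fixed slab $\RR^s\times(b+[0,1]^u)$, together with the bound $d_{\cD^s}(x,y)\le 2^{s+u}\Delta$ and the Lipschitz control on $\cD'$, forces $\cD'$ — which passes through $y$, a point at distance at most $2^{s+u}\Delta<3^{s+u}\Delta$ from $\cD$ hence from every tile of $\cC_b$ other than the one meeting $\cD$ — to be too close to the $Q$ hit by $\cD$ and too far from all the others; combined with the lower bound $d_{\cD^s}(x,y)\ge 2^{s+u}$, which pushes $y$ \emph{out} of the single tile $\cC_b$-tile near $x$, we conclude $\cD'$ meets \emph{no} tile of $\cC_b$.

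The main obstacle I expect is bookkeeping the two competing length scales: one must choose the numerical constants ($L$, the tile side $1$, the spacing $5^{s+u}\Delta$ within a color class, the separation $3^{s+u}\Delta$ between color-class tiles, and the admissible range $[2^{s+u},2^{s+u}\Delta]$ for $d_{\cD^s}(x,y)$) so that simultaneously (a) $y$ escapes the near-$x$ tile of $\cC_b$ (needs $d_{\cD^s}(x,y)$ larger than roughly a tile diameter, hence the $2^{s+u}$), and (b) $y$ does not accidentally fall into a \emph{different} tile of the same color class $\cC_b$ (needs $d_{\cD^s}(x,y)$ smaller than the intra-class spacing, which is why the spacing $5^{s+u}\Delta$ in the definition of $\cC_b$ exceeds $2^{s+u}\Delta$). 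The Lipschitz errors from $\cD,\cD',\cD^s$ each contribute at most $\alpha\cdot L\le\frac14$ to the relevant coordinates, so as long as $\alpha<\frac{1}{4L}$ these errors are absorbed into the $\frac14$-margins built into the $\frac14 Q\subset\frac12 Q\subset Q$ nesting. Once the arithmetic of powers of $2$, $3$, $5$ and $\Delta$ is checked to be consistent, the geometric argument itself is a direct application of Proposition \ref{Prop-Colorings}.
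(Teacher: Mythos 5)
Your argument is correct and is essentially the intended one (the paper states the corollary without proof, as a direct consequence of Proposition \ref{Prop-Colorings}): property (iii) pins down the class $\cC_b$ and the tile $Q$ met by $\cD$ in $\frac12 Q$; the lower bound $2^{s+u}$, being much larger than the tile side plus the Lipschitz slops of order $1/4$, pushes $y$ and hence $\cD'$ off $Q$; and the upper bound $2^{s+u}\Delta$ together with the separation property (ii) keeps $\cD'$ away from every other tile of $\cC_b$. The only slip is your aside that the upper bound serves to keep $x,y$ inside the slab $\RR^s\times[0,L]^u$ --- its actual role is the one you identify correctly later, namely that $2^{s+u}\Delta$ (plus slop) stays below the intra-class separation $3^{s+u}\Delta$ of property (ii).
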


\subsection{Choice of the rectangles}\label{ss:choicerect}
In this section we place the rectangles $R_{i,j,k}$ where the elementary perturbations are
supported.

Given $L, \hat \eta$ and a slice $\hat T_z$ in a $\rho$-cube $C_\rho^z$ which in coordinates $\psi^z_\rho$ is of the form $[0,1]^{s+c} \times [0,\hat \eta]^u + u_z$ for $u_z \in \{0\}^{s+c}\times [0,1-\eta]^u$, one considers the \emph{sub-slices} $T_{z,j}$ with $j = \ZZ^u \cap [0,L-1]^u$ of width $\eta:=\hat \eta/L$  and which are of the form $[0,1]^{s+c} \times [0,\eta]^u + u_{z,j}$ with $u_{z,j}= u_z + \eta j$.

\begin{prop}\label{prop-choicerect}
Given {$L$, $\alpha$ as in Proposition~\ref{Prop-Colorings}} and $\hat \eta$ there exists $\rho_5>0$ with the following property. If we consider $\rho < \rho_5$, a slice $\hat T_z$ of width $\hat \eta$ inside a $\rho$-cube $C_\rho^z$ and  the sub-slices $T_{z,j}$ of $\hat T_z$ of width $\eta$, then there exist rectangles $R_{z,j,k} \en T_{z,j}$ of width $\eta=\hat\eta/L$ satisfying:

If $\cD^u_1, \cD^u_2 \en C_{\rho}^z$ are graphs of functions $[0,1]^u \to [0,1]^{s+c}$
{ and $\cD^s \en C_{\rho}^z$ is the graph of a function $[0,1]^s \to [0,1]^{c+u}$} in the coordinates $\psi^z_\rho$ which verify:

\begin{itemize}
\item $\cD^u_1$ and $\cD^u_2$ are tangent to both $\cE^{cu}_\alpha$ and $\cE^u$, { and $\cD^s$ is tangent to $\cE^{s}$,
\item $\cD^u_1$ and $\cD^u_2$ intersect $\cD^s$ at points $x,y \in C_{\rho/3}^z$,
\item $d_{s}(x,y) \in [2^{s+u} \rho\eta,2^{s+u}\rho \eta \Delta],$}
\end{itemize}

\noindent then, there exists $j$ and $k$ such that inside $T_{z,j}$:

\begin{itemize}
\item The disk $\cD^u_1$ intersects the rectangles $\frac{1}{2}R_{z,j,k}$.
\item The disk $\cD^u_2$ does not intersect any of the rectangles $R_{z,j,k'}$.
\end{itemize}
\end{prop}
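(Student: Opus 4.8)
The plan is to reduce Proposition~\ref{prop-choicerect} to the purely geometric/combinatorial statement already established in Corollary~\ref{Cor-TwoDisksAndRcoloring}, by transporting everything into the flat $\psi^z_\rho$-coordinates and rescaling. First I would fix the slice $\hat T_z$ and pass to its coordinates: up to the change of variables $\psi^z_\rho$ followed by the affine rescaling that sends $\hat T_z$ to $[0,1]^{s+c}\times[0,L]^u$ (stretching the unstable coordinates by the factor $L$), the two unstable discs $\cD^u_1,\cD^u_2$, being tangent to $\cE^{cu}_\alpha$ and $\cE^u$, project onto graphs of $\alpha'$-Lipschitz maps $[0,L]^u\to\RR^s$ over the unstable coordinates, where $\alpha'$ can be taken $<\tfrac{1}{4L}$ provided $\alpha$ (hence $\rho_1$ from Proposition~\ref{Prop-NarrowingCones2}) was chosen small enough — this uses Lemma~\ref{Lema-NotChangesDistance}. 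Similarly the stable disc $\cD^s$ becomes (after the same rescaling) the graph of an $\alpha'$-Lipschitz map over the stable coordinates, and the distance hypothesis $d_{\cD^s}(x,y)\in[2^{s+u}\rho\eta,2^{s+u}\rho\eta\Delta]$ with $\eta=\hat\eta/L$ translates, after dividing out the common factor $\rho\eta=\rho\hat\eta/L$, into $d_{\cD^s}(x,y)\in[2^{s+u},2^{s+u}\Delta]$ in the rescaled picture, which is exactly the hypothesis of Corollary~\ref{Cor-TwoDisksAndRcoloring}. The small Riemannian-versus-Euclidean distortion inside a $\rho$-cube is controlled by taking $\rho<\rho_5$ small, as in earlier lemmas.

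Next I would invoke Corollary~\ref{Cor-TwoDisksAndRcoloring}: it produces an index $b\in\ZZ^u\cap[0,L-1]^u$ and the associated family $\cC_b$ of unit tiles in $\RR^s\times(b+[0,1]^u)$ such that (after rescaling back) $\cD^u_1$ meets $\tfrac12 Q$ for some $Q\in\cC_b$ while $\cD^u_2$ meets no tile of $\cC_b$. I then set $j:=b$, so that the sub-slice $T_{z,j}$ corresponds, in the rescaled coordinates, precisely to $\RR^s\times(b+[0,1]^u)$ intersected with $[0,1]^{s+c}$ in the central/stable variables; and for each tile $Q=a+[0,1]^s\times[0,1]^u$ in $\cC_b$ that actually lies inside $[0,1]^s$ in the stable variable I would declare the corresponding rectangle $R_{z,j,k}$ to be the $\psi^z_\rho$-preimage of $(a',u_{z,j})+[0,\eta]^s\times[0,1]^c\times[0,\eta]^u$, where $a'$ is $a$ rescaled back by $\rho\eta$; concretely $R_{z,j,k}$ has unstable side $[0,\eta]^u$ sitting inside the width-$\eta$ sub-slice $T_{z,j}$, full central extent $[0,1]^c$, and its stable footprint is the $\rho\eta$-rescaled copy of the tile $Q$. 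Property (ii) of Proposition~\ref{Prop-Colorings}, which guarantees $d(Q,Q')>3^{s+u}\Delta$, ensures that after rescaling by $\rho\eta$ the rectangles $R_{z,j,k}$ are pairwise disjoint and even well separated inside $T_{z,j}$, so they form an admissible collection of rectangles of width $\eta$ in the sense of Subsection~\ref{ss:perturbation1}. Finally, by construction $\cD^u_1$ meets $\tfrac12 Q$ for some such $Q$, i.e. $\cD^u_1$ meets $\tfrac12 R_{z,j,k}$; and $\cD^u_2$ meets no tile of $\cC_b$, hence meets no $R_{z,j,k'}$ inside $T_{z,j}$. This is exactly the conclusion.

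The one point requiring care — and the main obstacle — is the bookkeeping of the several rescalings and the verification that the Lipschitz constants genuinely land below the threshold $\tfrac{1}{4L}$ of Proposition~\ref{Prop-Colorings}, \emph{uniformly} in the slice $\hat T_z$ and the cube $C^z_\rho$: the width $\hat\eta$ was already fixed in Proposition~\ref{Prop-WanderingRegions} depending on $N,\Delta,\xi$, and $L$ was fixed in Proposition~\ref{Prop-Colorings} depending on $\Delta,s$, so one must double-check that the constant $\alpha$ coming from Proposition~\ref{prop-variationLip} (via the chain $\alpha\leadsto\rho_1\leadsto\rho_2$) can be chosen after $L$ and $\hat\eta$, and small enough that both the rescaled unstable discs and the rescaled stable disc are $(<\tfrac{1}{4L})$-Lipschitz; this is where the careful ordering of constants summarized in Subsection~\ref{ss:perturbation2} is used. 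The remaining verifications — that disks tangent to $\cE^{cu}_\alpha\cap\cE^u$ really do project to Lipschitz graphs over the unstable directions with the stated constant, that the Euclidean/Riemannian distortion in a $\rho$-cube is negligible for $\rho<\rho_5$, and that the distance normalization works out with the factor $\rho\eta$ — are routine and only use the almost-orthogonality of the bundles together with Proposition~\ref{Prop-NarrowingCones2} and Lemma~\ref{Lema-NotChangesDistance}.
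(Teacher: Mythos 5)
Your proposal is correct and follows essentially the same route as the paper: it rescales the slice coordinates by $1/\eta$ (equivalently by $1/(\rho\eta)$ from the manifold), places rectangles of width $\eta$ at the positions of the tiles from Proposition~\ref{Prop-Colorings}, and concludes via Corollary~\ref{Cor-TwoDisksAndRcoloring} together with Lemma~\ref{Lema-NotChangesDistance} and the ordering of constants in Subsection~\ref{ss:perturbation2}. The only slip is calling the unstable rescaling a stretch "by the factor $L$" (it is by $1/\eta=L/\hat\eta$, applied uniformly to the stable and unstable coordinates so Lipschitz constants are preserved), but your subsequent bookkeeping with the factor $\rho\eta$ is the correct one, matching the paper.
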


\begin{figure}[ht]
\begin{center}
\includegraphics[scale=0.4]{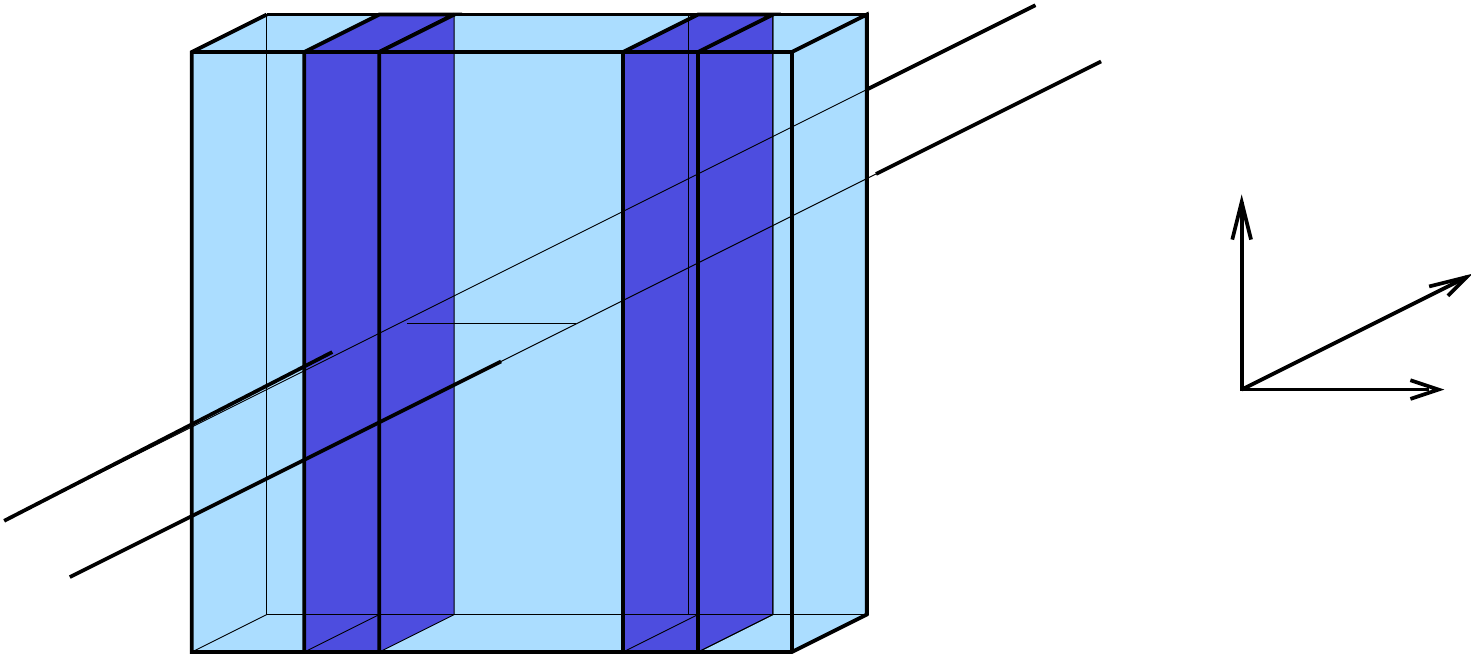}
\begin{picture}(0,0)
\put(-220,130){$R_{z,j,1}$}
\put(-160,130){$R_{z,j,2}$}
\put(-114,62){$T_{z,j}$}
\put(-280,40){$\cD_1$}
\put(-280,5){$\cD_2$}
\put(-44,83){$c$}
\put(-10,54){$s$}
\put(-10,75){$u$}
\end{picture}
\end{center}\end{figure}

\begin{proof}[\sc Proof] We use the familly of tiles $\cC$ given by Proposition \ref{Prop-Colorings} in each wandering slice to construct the rectangles. To do this, we expand the coordinates given by $\psi^z_\rho$ by a factor of $\frac{1}{\eta}$ and define the rectangles $Q \in \cC$
of the form $(a,b) + [0,1]^s \times [0,1]^u$ with $a \in [0, \frac{1}{\eta}-1]^s$. So, the rectangle $R$ associated to $Q$ is of the form

$$ R= (\psi^z_\rho)^{-1} \bigg(\bigg( \frac{a}{\eta} +[0,\eta]^s \bigg) \times [0,1]^c \times \bigg(\frac{b}{\eta} + [0,\eta]^u \bigg)\bigg) . $$

Notice that as the disks $\cD^{u}_1$ and $\cD^{u}_2$ are tangent to $\cE^{cu}_\alpha$, their projection onto the stable-unstable direction is the graph of a { $2\alpha$}-Lipschitz map from $[0,1]^u \to [0,1]^s$.
{ As the disk $\cD^{s}$ is tangent to $\cE^{s}\subset \cE^s_{1/10}$, their projection onto the stable-unstable direction is the graph of a $1/2$-Lipschitz map from $[0,1]^s \to [0,1]^u$.}
After a suitable change of scale, the property of the disks is a restatement of Corollary \ref{Cor-TwoDisksAndRcoloring}.
\end{proof}


\subsection{Choosing the perturbation}\label{ss:perturbation2}

The only thing missing to construct the perturbation is to determine the constants. These will be summarised in this section.

Fix a neighbourhood  $\cV$ of $f$ in $\Diff^1(M)$
and small constants $r,r't,\gamma$ as in the statement of Theorem~\ref{Thm-PerturbationResult}.
 As in the beginning of the section we fix neighbourhoods $U,U_1, \cU$  so that for any diffeomorphism $g\in \cU$ the maximal invariant set $\Lambda_g$ in $U_1$ is still partially hyperbolic with the same splitting. There are well defined cone-fields in $U$ which satisfy the properties of Proposition \ref{Proposition-Cones}.

We have $\Delta = \sup_{x\in M, g \in \cU} \{\|D_xg\|, \|D_xg^{-1}\|\}$, $s= \dim E^s$ and $d=\dim M=s+c+u$ fixed.
\medskip

\noindent
{\bf Fixing $\xi$.} We choose $\xi$ as in Lemma \ref{Lema-BoundedGeometry} which depends only on $d$.

\medskip

\noindent
{\bf Fixing $\kappa$.}  The neighborhoods $\cU$ and $\cV$ gives us a value of $\kappa$ and $\rho_0>0$ via Lemma \ref{Lema-ManyElementaryPerturbations} so that by applying disjoint elementary perturbations of size $\kappa$ in subsets of $\rho$-cubes ($\rho<\rho_0$) gives a diffeomorphism in $\cU \cap \cV$.

\medskip

\noindent
{\bf Fixing $L$ and $\theta$.} We choose $L:=L(\Delta,s)$ as in Proposition \ref{Prop-Colorings} and
{ $\theta< \frac{\kappa}{1000 d2^d\Delta}$}
(c.f. Corollary \ref{cor-nojointint}).

\medskip

\noindent
{\bf Fixing $N$.} We choose $N$ depending on $\theta$ via Proposition \ref{Prop-NarrowingCones}.

\medskip

\noindent
{\bf Fixing $\eta$ and $\hat \eta$.} { Proposition \ref{Prop-WanderingRegions} gives  $\hat \eta:= \hat \eta(N,\Delta,\xi)$ and $ \eta= \frac{\hat \eta}{L}$.}
\medskip

\noindent
{\bf Fixing $\alpha$.} We fix $\alpha<{ \frac 1 {20L}}$ using { Proposition \ref{Prop-Colorings}}.

\medskip

\noindent
{\bf Fixing $\rho$.}  The value of $\alpha$ gives a value $\rho_1>0$ via Proposition  \ref{Prop-NarrowingCones2} which bounds $\rho_2$ given by Corollary \ref{cor-nojointint}.  One fixes $\rho_3$ with Lemma \ref{Lema-BoundedIntersectionIterates} and $\rho_4 < \rho_3$ with Proposition \ref{Prop-WanderingRegions}. Finally we get $\rho_5$ via Proposition \ref{prop-choicerect}. We will also demand that $\rho < \rho_6 = \min\{r,t,\gamma\}$.

In summary, we fix $\rho < \min \{\rho_0, \rho_2, \rho_4, \rho_5, \rho_6\}$.

\medskip

\noindent
{\bf Realizing the perturbation.} We fix a $\xi$-covering $F=\{x_1, \ldots, x_m\}$ of $U$ given by Lemma \ref{Lema-BoundedGeometry} defining the cubes $C^i_\rho:=C^{x_i}_\rho$. Using Proposition \ref{Prop-WanderingRegions}, in each cube $C^i_\rho$ we have a wandering slice $\hat T_{i}$ which is disjoint from its first $N$-forward and backward iterates (as well as from the iterates of the other slices). The slice $\hat T_i$ decomposes as a union of subslices $T_{i,j}$ as in subsection \ref{ss:choicerect}.

By the choices of $\eta$ and $\hat \eta$ we can considering in each slice $T_{i,j}$ some rectangles $R_{i,j,k}$ using Proposition \ref{prop-choicerect}.

Once the rectangles are chosen, we obtain the diffeomorphism $g \in \cU \cap \cV$ as explained in subsection \ref{ss:perturbation1} by composing with elementary perturbations in each rectangle. It remains to check that $g$ verifies the conclusion of Theorem \ref{Thm-PerturbationResult}.

\subsection{Corroboration that the perturbation works}\label{ss:check}

The proof is by contradiction. Assume there are two points $x,y \in \Lambda_g$ such that $x \in \cW^s_g(y)$ with $d_s(x,y) \in [r,r']$ and such that for every $x' \in \cW^{u}_{g,t}(x)$ we have that $\cW^s_{g,\gamma}(x') \cap \cW^u_{g,\gamma}(y) \neq \emptyset$.

Iterating forward the points $x$ and $y$ they eventually become at distance $d_s(g^n(x),g^n(y)) \in [2^{s+u}\rho \eta, 2^{s+u}\rho \eta \Delta]$. As $2^{s+u}\rho \eta \Delta < \rho/4$ then $g^n(x)$ and $g^n(y)$ belong to a cube $C^i_{\rho/3}$ of the $\xi$-covering.

Forward iterates expand unstable manifolds, so, $g^n(\cW^u_t(x))$ and $g^n(\cW^u_\gamma(y))$ contain unstable disks $\cD^u_1$ and $\cD^u_2$ which cross $C^i_\rho$ and in particular the slices $T_{i,j}$. Moreover, these disks are tangent to $\cE^{cu}_\alpha$ and $\cE^u$ thanks to Lemma \ref{Lema-NotChangesDistance}.
{ Both points $g^n(x)$ and $g^n(y)$ belong to a stable disc $\cD^s$
which crosses $C^i_\rho$ and is tangent to $\cE^s$.}
So, we can apply Proposition \ref{prop-choicerect}.
It implies that the conclusion of Corollary \ref{cor-nojointint} holds.

In particular, there exists a point of $T_{i,j}$
- that we denote by $g^n(x')$ -
in the connected component $\cD^u(g^n(x))$ of $\cW^u(g^n(x))\cap C^i_\rho$
containing $g^n(x)$ with the following property:  the connected components $\cD^u(g^n(y))$ of $\cW^u(g^n(y))\cap C^i_\rho$
containing $g^n(y)$ and  $\cD^s(g^n(x'))$ of $\cW^s(g^n(x'))\cap C^i_\rho$ containing $g^n(x')$ do not intersect.

Since $\rho<t$, iterating by $g^{-n}$, one gets $x'\in \cW^u_t(x)$.
By our assumption, $\cW^s_{g,\gamma}(x') \cap \cW^u_{g,\gamma}(y)$ intersect at a (unique) point $y'$.
By Proposition~\ref{Prop-LocalProductStructure}, for each $0\leq k\leq n$, the distances
$d(g^k(x'),g^k(y'))$ and $d(g^k(y),g^k(y'))$ are smaller that
$2d(g^k(x'),g^k(y))$. In particular, the distances $d_s(g^n(x'),g^n(y'))$ and $d_u(g^n(y),g^n(y'))$ are smaller that
$\rho/2$.

Since $g^n(x')\in T_{i,j}$, it belongs to $C^i_{\rho/2}$.
As a consequence the $\rho/2$-neighbor\-hood of $g^n(x')$ in $\cW^s(g^n(x'))$ is contained in $\cD^s(g^n(x'))$.
One deduces that $g^n(y')\in \cD^s(g^n(x'))$.
Since $d_u(g^n(y),g^n(y'))\leq \rho/2$ and $g^n(y')\in C_\rho^i$,
one also gets $g^n(y')\in \cD^u(g^n(y))$.
Hence $\cD^s(g^n(x'))$ intersects $\cD^u(g^n(y))$, which contradicts the conclusion of Corollary \ref{cor-nojointint}.

\section{Proof of Theorem \ref{Theorem-MainGenerique}}\label{s.teogenerique}

In this section we will show how the pertubation result Theorem
\ref{Thm-PerturbationResult} implies Theorem
\ref{Theorem-MainGenerique} by a standard Baire argument.
First we need to show that the property obtained by the
perturbation is robust.

\begin{lema}\label{Lemma-RobustProperty}
Let $A\en M$ be a compact set and for each $C^1$-diffeomorphism
$f: M \to M$, let $\Lambda_f:=\bigcap_{n\in \ZZ}f^n(A)$ be the maximal invariant set in $A$.
Let $f_0$ such that $\Lambda_{f_0}$ is partially hyperbolic.
Then there exists an open $C^1$-neighborhood $\cU$ of $f$ and $r_0>0$ such that
for each $r<r'$ in $(0,r_0)$ and each $t,\gamma>0$,
the set of diffeomorphisms $f\in \cU$ satisfying the two following properties simultaneously is $C^1$-open:
\begin{itemize}
\item $\Lambda_f$ is partially hyperbolic.
\item For every $x,y \in \Lambda_f \cap \cW^s(x)$ with $d_s(x,y)
\in [r, r']$ such that $\cW^u_t(x) \en \Lambda_f$ there
exists $x' \in \cW^u_t(x)$ such that $ \cW^s_\gamma (x') \cap
\cW^u_\gamma (y) = \emptyset$.
\end{itemize}
\end{lema}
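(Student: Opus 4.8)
The plan is to deduce openness from three continuity/persistence facts: the persistence of partial hyperbolicity, the continuity of the strong stable and unstable laminations, and a compactness argument over the pairs of points at the prescribed scale. First I would fix $\cU$ and $r_0>0$ as follows: by Proposition \ref{Proposition-Cones} and Remark \ref{Remark-PersistencePH}, there is a $C^1$-neighborhood $\cU$ of $f_0$ and a neighborhood $U_1$ of $\Lambda_{f_0}$ such that for every $g\in\cU$ the maximal invariant set $\Lambda_g=\bigcap_n g^n(U_1)$ is partially hyperbolic with bundles of the same dimensions, varying continuously with $g$ and the base point; I would also take $r_0<\nu_0$ (Remark \ref{Remark-ContractionStable}) so that the local strong manifolds $\cW^s_{g,r'}$, $\cW^u_{g,t}$, $\cW^\sigma_{g,\gamma}$ are well-defined and depend continuously on $g$ and the point in the $C^1$ topology (Theorem \ref{Teo-StableManifold} and the remark following it). Note the $U_1$-maximal-invariant-set convention makes ``$\Lambda_f$ is partially hyperbolic'' automatically true and open on $\cU$; the real content is the second bullet.

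Next, arguing by contradiction, I would suppose $f\in\cU$ has the property but is accumulated by $g_n\to f$ in $\cU$ which fail it. For each $n$ there are $x_n,y_n\in\Lambda_{g_n}$ with $y_n\in\cW^s_{g_n}(x_n)$, $d_s(x_n,y_n)\in[r,r']$, $\cW^u_{g_n,t}(x_n)\subset\Lambda_{g_n}$, and such that for \emph{every} $x'\in\cW^u_{g_n,t}(x_n)$ the intersection $\cW^s_{g_n,\gamma}(x')\cap\cW^u_{g_n,\gamma}(y_n)$ is nonempty. By compactness I extract $x_n\to x$, $y_n\to y$; upper semicontinuity of $h\mapsto\Lambda_h$ gives $x,y\in\Lambda_f$, continuity of the stable lamination gives $y\in\cW^s_f(x)$, and continuity of $d_s$ gives $d_s(x,y)\in[r,r']$. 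The inclusion $\cW^u_{f,t}(x)\subset\Lambda_f$ passes to the limit because $\cW^u_{g_n,t}(x_n)\to\cW^u_{f,t}(x)$ in the $C^1$ topology and $\Lambda_{g_n}$ converges (upper-semicontinuously, but here the full local unstable disk lies inside, so the limit disk lies in $\Lambda_f$). Hence $x,y$ satisfy the hypotheses of the second bullet for $f$, so there is a \emph{good} $x'\in\cW^u_{f,t}(x)$ with $\cW^s_{f,\gamma}(x')\cap\cW^u_{f,\gamma}(y)=\emptyset$.

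Finally I would derive a contradiction by transporting this good $x'$ back to the $g_n$: choose $x'_n\in\cW^u_{g_n,t}(x_n)$ with $x'_n\to x'$ (possible since the local unstable disks converge $C^1$), and invoke the failure of the property for $g_n$ to get $z_n\in\cW^s_{g_n,\gamma}(x'_n)\cap\cW^u_{g_n,\gamma}(y_n)$. Passing to a subsequence $z_n\to z$, continuity of both laminations forces $z\in\cW^s_{f,\gamma}(x')\cap\cW^u_{f,\gamma}(y)$, contradicting emptiness of that intersection. (A mild technical care: to pass the equality $\cW^s_{f,\gamma}(x')\cap\cW^u_{f,\gamma}(y)=\emptyset$ to a genuine contradiction one may want the intersection to be empty on a slightly larger scale $\gamma'>\gamma$, which one arranges by shrinking the quantifier scales at the outset, exactly the ``modulo changing slightly the constant $\delta$'' remark in Remark \ref{Remark-OpenGivenScale}; alternatively one uses that if the disjoint compact sets $\cW^s_{f,\gamma}(x')$ and $\cW^u_{f,\gamma}(y)$ are at positive distance, then the nearby disks $\cW^s_{g_n,\gamma}(x'_n)$, $\cW^u_{g_n,\gamma}(y_n)$ are still disjoint for $n$ large.) This last formulation is cleaner and is the step I expect to be the only delicate point: one must be sure that the $C^0$-distance between the two nearby local leaves is bounded below uniformly, which follows from $C^1$-convergence of the leaves together with the positive distance between the two limiting compact disks. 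The rest is routine compactness and the continuity statements already recorded in Section \ref{Section-Preliminaries}.
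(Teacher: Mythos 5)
Your proposal is correct and follows essentially the same route as the paper: a contradiction argument taking $g_n\to f$ with failing pairs $x_n,y_n$, extracting limits via compactness and the continuity of the strong laminations (Theorem \ref{Teo-StableManifold}), and observing that the failure of the property passes to the limit, contradicting the property for $f$. Your extra care about the scale $\gamma$ (disjoint compact local discs stay at positive distance, hence nearby discs stay disjoint) is exactly the right way to make precise what the paper leaves implicit.
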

\begin{proof}[\sc Proof]
We choose a small open $C^1$-neighborhood $\cU$ of $f_0$ and $r_0>0$ such that
for every $f\in \cU$, the set $\Lambda_f$ is partially hyperbolic, and
moreover for each $x\in \Lambda_f$,
the strong stable manifold $\cW^s(x)$ contains a disc of radius larger
than $r_0$ for the distance $d_s$.

We then proceed by contradiction.
Assume otherwise, that there exist $f\in \cU$ satisfying the second property
and a sequence $g_n \to f$ in the $C^1$-topology and pairs of
points $x_n, y_n \in \Lambda_{g_n} \cap \cW^s_{g_n}(x)$ with
$d_s(x_n,y_n) \in [r, r']$ such that $\cW^u_{g_n,t}(x_n) \en
\Lambda_{g_n}$ but for every $x' \in \cW^u_{g_n,t}(x_n)$ we have
that $\cW^s_{g_n,\gamma}(x')\cap \cW^u_{g_n,\gamma}(y_n))\neq \emptyset$.

Using the continuity of stable manifolds (Theorem \ref{Teo-StableManifold})  and considering limits of $x_n,y_n$ we find two points $x,y \in
\Lambda_f \cap \cW^s(x)$ such that $d_s(x,y) \in [r,r']$ and
such that $\cW^u_t(x) \en \Lambda_f$ for which the second property does not
hold. This is a contradiction and concludes the proof of the
Lemma.\end{proof}

Now we are ready to prove Theorem \ref{Theorem-MainGenerique}:

\begin{proof}[\sc Proof of Theorem \ref{Theorem-MainGenerique}] Consider a countable
base of the topology of $M$ by open sets and let $\{O_k\}_{k\in
\NN}$ be the set of finite unions of elements in the base.

Let $\cA^1_k$ be the set of diffeomorphisms $f$ such that the
maximal invariant set $\Lambda_{k,f} = \bigcap_n f^n(\overline{O_k})$
of $\overline{O_k}$ is partially hyperbolic. Consider the set
$\cO^1_k= \cA^1_k \cup \overline{\cA^1_k}^c$. Since $\cA^1_k$ is
open (see Proposition \ref{Proposition-Cones}) we get that
$\cO^1_k$ is open and dense in $\Diff^1(M)$.

Let $T=\{(r,r',t,\gamma)\in (0,1)^4,\; r<r'\}$.
For each $\tau=(r,r',t,\gamma)\in T$,
we consider $\cA^2_{k,\tau} \en \cA^1_k$ to be the
$C^1$-interior of the set of $f \in \cA^1_k$ such that:
\begin{itemize}
\item[] For every pair of points $x,y \in \Lambda_{k,f} \cap \cW^s(x)$
such that $d_s(x,y) \in [r,r']$ and such that $\cW^u_{t}(x)
\en \Lambda_{k,f}$ then there exists $x' \in \cW^u_t(x)$ such that:

$$ \cW^s_\gamma(x') \cap \cW^u_\gamma(y) = \emptyset $$

\end{itemize}

Consider the sets $\cO^2_{k,\tau} = (\cA^2_{k,\tau} \cup
\overline{\cA^2_{k,\tau}}^c) \cap \cO^1_k $ which are again open
and dense by definition.

Let us show that the set $\cG= \bigcap_{k,\tau} \cO^2_{k,\tau}$ with
$k \in \NN$ and $\tau$ varying in a countable dense set of $T$ verifies the properties claimed in Theorem
\ref{Theorem-MainGenerique}. The set  $\cG$ is $G_\delta$-dense
by construction.

Let $f \in \cG$ and let $\Lambda$ be a partially hyperbolic set
for $f$ which is $\cW^u$-saturated. It holds that inside any
compact neighbourhood $V$ of $\Lambda$ there exists $k$ such that
$\Lambda \en O_k \en  \overline O_k \en \interior V$.

Consider a sufficiently small neighbourhood $O_k$ of $\Lambda$
and $\tau=(r,r',t,\gamma)\in T$ sufficiently close to $0$.
Using Theorem \ref{Thm-PerturbationResult}
and Lemma \ref{Lemma-RobustProperty},
we know that $f$ must
belong to the closure of $\cA^2_{k,\tau}$.
By construction this implies that $f$ belongs to the open set $\cA^2_{k,\tau}$.
This implies that, using
again Lemma \ref{Lemma-RobustProperty}, there exists $\delta>0$
which verifies the conditions in Theorem
\ref{Theorem-MainGenerique}. This concludes the proof.
\end{proof}

\section{Applications to minimal $u$-saturated partially
hyperbolic sets}\label{Section-Applications}

In this section we will prove Theorem \ref{Teo-FinitelyManyMinimal}. We first state some results we will use in the proof.

\subsection{Some preliminaries}
Let us fix $k\geq 2$. We denote by $B$ the open unit ball $B(0,1)$ in $\RR^k$,
by $D$ the $1$-codimensional closed disc $\overline{B(0,1/2)}\cap (\RR^{k-1}\times \{0\}$
and $\beta=\{0\}^{k-1}\times [-1/2,1/2]$.

We will use  the following result from topology.

\begin{lema}\label{l.top}
Let us consider two continuous maps
$\hat\beta \colon \beta \to \mathbb B$
and $\hat D \colon D \to B$ which are $1/8$-close to the inclusions for the $C^0$-distance.
Then the images of $\hat D$ and $\hat \beta$ intersect.
\end{lema}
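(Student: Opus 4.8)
The plan is to prove Lemma~\ref{l.top} by a degree-theoretic (or equivalently a Brouwer-type) argument, after reducing the statement to a linking/fixed-point assertion in $\RR^k$. First I would set up coordinates so that $D$ is a $(k-1)$-disc lying in the hyperplane $\RR^{k-1}\times\{0\}$ centered at the origin with radius $1/2$, and $\beta$ is the segment $\{0\}^{k-1}\times[-1/2,1/2]$ meeting it transversally at the origin. The maps $\hat D$ and $\hat\beta$ are $C^0$-perturbations of the inclusions of these two sets into the unit ball $B\subset\RR^k$, of size at most $1/8$. Note that $1/8$ is much smaller than the $1/2$ ``radius'' of either factor, so the perturbed disc $\hat D(D)$ is still a $(k-1)$-dimensional ``cap'' whose boundary $\hat D(\partial D)$ stays within distance $1/8$ of the sphere of radius $1/2$ in $\RR^{k-1}\times\{0\}$, and in particular stays at distance $\geq 1/2-1/8>1/4$ from the segment $\beta$ and its small perturbation $\hat\beta(\beta)$.

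The key step is the following. Consider the map $\Phi\colon D\times\beta\to\RR^k$ defined by $\Phi(p,q)=\hat D(p)-\hat\beta(q)$. We want to show $0\in\Phi(D\times\beta)$. Since $D\times\beta$ is (homeomorphic to) a $k$-dimensional cube, and since on the boundary $\partial(D\times\beta)=(\partial D\times\beta)\cup(D\times\partial\beta)$ the map $\Phi$ stays uniformly away from $0$ (by the distance estimate above: on $\partial D\times\beta$ the first coordinates of $\hat D(p)$ and $\hat\beta(q)$ are separated, and on $D\times\partial\beta$ the last coordinate of $\hat D(p)$ is near $0$ while that of $\hat\beta(q)$ is near $\pm1/2$), it suffices to compute that the Brouwer degree of $\Phi$ on this cube, relative to $0$, is nonzero. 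I would do this by a straight-line homotopy $\Phi_t(p,q)=t\,\Phi(p,q)+(1-t)\Phi_0(p,q)$ where $\Phi_0(p,q)=\iota_D(p)-\iota_\beta(q)$ is the corresponding difference for the unlinked inclusions; one checks that the perturbation size $1/8$ is small enough that $\Phi_t$ never vanishes on $\partial(D\times\beta)$ for any $t\in[0,1]$, so the degree is homotopy-invariant, and then $\Phi_0$ is an explicit affine map whose degree at $0$ is $\pm1$ by direct inspection (writing $p=(p',0)$, $q=(0,\dots,0,q_k)$, the equation $\Phi_0=0$ has the unique transverse solution $p'=0$, $q_k=0$).

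The main obstacle is making the boundary non-vanishing estimates fully rigorous and uniform: one must verify carefully that on each face of $\partial(D\times\beta)$, and throughout the homotopy, $\|\Phi_t(p,q)\|$ is bounded below by a positive constant. This is where the hypothesis that the perturbations are $C^0$-close with the specific constant $1/8$ (against the geometric constants $1/2$ of the configuration) is used; the point is a quantitative transversality/linking bound rather than any deep topology. An alternative, perhaps cleaner, route avoiding explicit degree computations is to invoke a standard fact: if $\hat D$ is $C^0$-close to the inclusion of a properly embedded codimension-one disc separating $B$, then $\hat D(D)$ still separates $B$ into two components, with $\hat\beta(-1/2)$ in one and $\hat\beta(1/2)$ in the other (again by the distance estimate), so the connected arc $\hat\beta(\beta)$ must cross $\hat D(D)$; this reduces the lemma to a separation statement that follows from Alexander duality (or the Jordan--Brouwer separation theorem applied to the perturbed disc capped off near $\partial B$). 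Either way, the topological input is elementary and the real work is the elementary-but-fiddly distance bookkeeping. I would present the degree/homotopy version as the cleanest self-contained argument.
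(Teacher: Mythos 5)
Your degree-theoretic argument is correct, and it is a genuinely different route from the one in the paper. You work with the difference map $\Phi(p,q)=\hat D(p)-\hat\beta(q)$ on the $k$-dimensional product $D\times\beta$: on $\partial D\times\beta$ the unperturbed difference has norm at least $1/2$ in the first $k-1$ coordinates, and on $D\times\partial\beta$ it has norm at least $1/2$ in the last coordinate, so the total perturbation $\leq 1/8+1/8=1/4$ keeps the straight-line homotopy $\Phi_t$ nonvanishing on $\partial(D\times\beta)$; since the unperturbed map $(p',q_k)\mapsto(p',-q_k)$ is linear with a single nondegenerate zero, the degree at $0$ is $\pm1$, hence $\Phi$ vanishes and the images meet. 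This is sound as stated (Brouwer degree needs no smoothness, so even your preliminary smoothing remark is dispensable), and the only cosmetic slip is the bound ``$1/2-1/8>1/4$'' where the two-sided perturbation actually gives $\geq 1/2-1/4=1/4$, which is all the homotopy needs. The paper instead doubles the data: it closes up $\beta\cup\hat\beta$ into a loop $\eta\colon S^1\to B$ and $D\cup\hat D$ into a sphere $\Sigma\colon S^{k-1}\to B$ (after smoothing, since it uses Guillemin--Pollack mod-$2$ intersection theory), observes that the total intersection number vanishes because $B$ is contractible, and then pins down the four partial intersection numbers $\mathcal I(I_i,D_j)$ via two auxiliary maps $\eta_0,\Sigma_0$ pushed off the $1/8$-neighborhoods, concluding $\mathcal I(I_3,D_2)=1$. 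Your approach buys a shorter, self-contained, purely quantitative computation (with an integer degree rather than a mod-$2$ count) and avoids both the smoothing step and the auxiliary transfer maps; the paper's approach is the linking-number picture made explicit, at the cost of the doubling constructions. Your alternative sketch via Jordan--Brouwer separation of the capped-off disc would also work but is the least detailed part of your write-up; the degree version you present is the one to keep.
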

\begin{proof}[\sc Proof]
Note that it is enough to prove it assuming that the maps $\hat \beta$ and $\hat D$ are smooth.
Indeed if one assumes by contradiction that the images of two continuous maps $\hat D$
and $\hat \beta$ do not intersects, one can approximate these maps by smooth ones in the $C^0$-topology,
which do not intersect either.

In the case $\hat \beta$ and $\hat D$ are smooth,
we build two smooth maps $\eta\colon S^1\to B$
and $\Sigma\colon S^{k-1}\to B$ as follows.

We decompose $S^1$ into four arcs
$I_1,\dots,I_4$ identified with $\beta$, and with disjoint interiors.
The map $\eta$ on $I_1$ is the identity and on $I_3$ is $\hat \beta$.
Moreover, the image of $I_2$ and $I_4$ under $\eta$ has diameter smaller than $1/8$.

We decompose the sphere $S^{k-1}$ into three submanifolds $D_1,D_2, A$,
glued along their boundaries such that
\begin{itemize}
\item $A$ is an annulus $(\partial D)\times [0,1]$,
\item $D_1,D_2$ are identified to $D$.
\end{itemize}
The map $\Sigma$ on $D_1$ is the identity and on $D_2$ is $\hat D$.
Moreover for each $x\in \partial D$, the image of $\{x\}\times [0,1]\subset A$
has diameter smaller than $1/8$.

One associated to the maps $\eta$ and $\Sigma$ and intersection number in $\ZZ/2\ZZ$
(see \cite[Chapter 2]{guillemin-pollack}), which is  invariant under homotopy,
hence vanishes since the ball is simply connected.

Note that the image of $\Sigma$ does not intersect $\eta(I_2\cup I_4)$
and the image of $\eta$ does not intersect $\Sigma(A)$.
Considering the restrictions of $\eta$ to $I_1,I_3$ and of $\Sigma$ to $D_1,D_2$,
one gets four intersection numbers, denoted by
$\mathcal I(I_i,D_j)$. Their sum vanishes (mod $2$).
By construction $\mathcal I(I_1,D_1)=1$.

In order to prove that the images of $\hat \beta$ and $\hat D$ intersect, it is enough to show that
$\mathcal I(I_3,D_2)=1$. Hence, it is enough to show that
$\mathcal I(I_1,D_2)=\mathcal I(I_3,D_1)=1 \mod 2$.

In order to compute $\mathcal I(I_1,D_2)$, one
build another map $\eta_0\colon S^1\to B$ which coincides with the identity on $I_1$
and such that $\eta_0(I_2\cup I_3\cup I_4)$ is disjoint from the image of $\Sigma$.
(This is possible since the image of $\Sigma$ is contained in the $1/8$-neighborhood of $D$
whose complement is connected.)
This gives $\mathcal I(I_1,D_2)=\mathcal I(I_1,D_1)=1\mod 2$.

In order to compute $\mathcal I(I_3,D_1)$, one
build another map $\Sigma_0\colon S^{k-1}\to B$ which coincides with the identity on $D_1$
and such that $\Sigma_0(A\cup D_2)$ is disjoint from the image of $\eta$.
(This is possible since the image of $\eta$ is contained in the $1/8$-neighborhood of $\beta$.)
This gives $\mathcal I(I_3,D_1)=\mathcal I(I_1,D_1)=1\mod 2$.
\end{proof}

We will use the following result which is a consequence of the main result in \cite{BC-Center}.

\begin{teo}\label{teo:BC}
Let $\Lambda$ be a partially hyperbolic compact invariant set for a diffeomorphism $f:M \to M$ with splitting $T_\Lambda M =E^{s} \oplus E^c \oplus E^u$ and assume that for every $x\in \Lambda$ one has that $\cW^s(x) \cap \Lambda = \{x\}$. Then there exists $S \en M$ a locally invariant embedded submanifold containing $\Lambda$ and tangent to $E^c \oplus E^u$ at every point of $\Lambda$.
\end{teo}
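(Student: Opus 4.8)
\textbf{Proof strategy for Theorem~\ref{teo:BC}.}
The plan is to build the submanifold $S$ as a \emph{center-unstable} manifold of $\Lambda$, using the classical theory of normally hyperbolic / partially hyperbolic invariant sets. The hypothesis $\cW^s(x)\cap\Lambda=\{x\}$ for every $x\in\Lambda$ is the mechanism that upgrades the (always existing, immersed, non-unique) center-unstable lamination into a genuinely \emph{embedded} submanifold: it guarantees that two distinct points of $\Lambda$ cannot lie in the same local center-unstable plaque in an ``overlapping'' way, because a transverse coincidence of plaques would force a nontrivial strong stable connection inside $\Lambda$.

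First I would invoke the center-unstable manifold theorem for the compact partially hyperbolic set $\Lambda$ (as in \cite{HPS}, or \cite[Appendix B]{BDV}): there is a locally invariant $C^1$ lamination $\cW^{cu}$, with leaves tangent to $E^c\oplus E^u$ along $\Lambda$, and each point $x\in\Lambda$ has a local plaque $W^{cu}_{loc}(x)$ of uniform size. Next, using the dominated splitting $E^s\oplus(E^c\oplus E^u)$ together with the strong stable manifold theorem (Theorem~\ref{Teo-StableManifold}), I would set up a local product structure: in a small neighborhood $V$ of $\Lambda$, every point is the transverse intersection of a strong stable disc through a point of $\Lambda$ and a center-unstable plaque. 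The key local step is then: if $x,y\in\Lambda$ and $W^{cu}_{loc}(x)\cap W^{cu}_{loc}(y)\neq\emptyset$, then the two plaques coincide near that intersection point. Indeed, holonomy along the strong stable discs would otherwise produce a point $z\in\cW^s(y)\cap\Lambda$ (obtained by sliding from a point of $\Lambda$ on $W^{cu}(x)$ along the strong stable foliation into $W^{cu}(y)$, using local invariance and the trapping property), with $z\neq y$ — contradicting the hypothesis. This is essentially the role played by \cite{BC-Center}, whose main result provides exactly this kind of control and lets one promote the plaque family to a well-defined embedded manifold.

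Having established that distinct plaques through points of $\Lambda$ either coincide or are disjoint locally, I would glue them: define $S$ to be the union of the local plaques $W^{cu}_{loc}(x)$ over $x\in\Lambda$, shrinking their size uniformly so that the overlap relation is an equivalence and the quotient is a $C^1$ manifold. Local invariance of the plaque family gives that $S$ is locally $f$-invariant (i.e. $f(S)\cap V'=S\cap V'$ for a slightly smaller neighborhood $V'$), and tangency to $E^c\oplus E^u$ along $\Lambda$ is inherited from the plaques. That $S$ is \emph{embedded} (not merely immersed) follows because near each point it is a single plaque, and the strong stable local product structure separates plaques through different points of $\Lambda$ that are close in $M$ but far inside the lamination; this is where the global hypothesis $\cW^s(x)\cap\Lambda=\{x\}$ is used once more to rule out accumulation of distant plaques onto one another.

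The main obstacle is precisely the embeddedness: center-unstable manifolds of a general invariant set are immersed and highly non-unique, and they can self-intersect or accumulate on themselves in complicated ways. Controlling this requires the strong stable holonomy argument above to be made uniform and global, and it is here that one genuinely needs the quoted consequence of \cite{BC-Center} rather than just the standard Hirsch--Pugh--Shub machinery. Once embeddedness is in hand, local invariance and the tangency statement are routine consequences of the plaque family's properties.
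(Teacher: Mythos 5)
The paper does not actually prove this statement: it is imported wholesale, stated as ``a consequence of the main result in \cite{BC-Center}'', so there is no internal proof to compare against. To the extent that your proposal ultimately defers the decisive step (``promote the plaque family to a well-defined embedded manifold'') to \cite{BC-Center}, it agrees with the paper's treatment --- but then it is a citation, not a proof, and the surrounding sketch should not give the impression that the reduction to \cite{BC-Center} is routine.

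Taken as a standalone argument, the sketch has genuine gaps. First, the Hirsch--Pugh--Shub machinery does not provide a locally invariant \emph{lamination} tangent to $E^c\oplus E^u$ over an arbitrary partially hyperbolic set; it only provides a locally invariant \emph{plaque family}, which is non-unique and in general not coherent: plaques through nearby points of $\Lambda$ may cross one another, and their union need not be a manifold. Obtaining coherence and an embedded submanifold is precisely the content of the theorem, not an available input. Second, the key step ``if $W^{cu}_{loc}(x)\cap W^{cu}_{loc}(y)\neq\emptyset$ then the plaques coincide, since otherwise strong stable holonomy produces $z\in\cW^s(y)\cap\Lambda$ with $z\neq y$'' does not follow from the hypothesis: the plaques contain, in general, no points of $\Lambda$ other than their centers, there is no ambient strong stable foliation to slide along (only the strong stable lamination over $\Lambda$), and a transverse crossing of two plaques yields no strong stable connection \emph{between points of $\Lambda$}, which is all that the hypothesis $\cW^s(x)\cap\Lambda=\{x\}$ forbids. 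The way this hypothesis is exploited in \cite{BC-Center} is considerably more delicate than a holonomy shortcut, which is why the result is a substantial theorem there. So your outline correctly locates the difficulty (embeddedness/coherence), but the mechanism you propose to resolve it would not work as stated; like the paper, the honest move is simply to invoke \cite{BC-Center}.
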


By \emph{locally invariant} we mean that there exists a neighborhood $V$ of $\Lambda$ in $S$ such that $f^{\pm 1}(V) \en S$; in particular, one can think as if the dynamics of $\Lambda$ is a lower dimensional dynamics (where the stable direction is not seen).

\begin{obs}
When the center direction is one-dimensional and $f$ is $C^1$-generic, it follows directly from the work of \cite{CPS} that when the hypothesis of Theorem \ref{teo:BC} are satisfied, then $\Lambda$ is uniformly hyperbolic. The proof we present here will not use this fact.
\end{obs}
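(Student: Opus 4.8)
The plan is to deduce the uniform hyperbolicity of $\Lambda$ from the structure theory of partially hyperbolic sets with one-dimensional center of \cite{CPS}, after first using Theorem~\ref{teo:BC} to reduce to a lower-dimensional dynamics. I argue by contradiction: assume $f$ is $C^1$-generic, $\Lambda$ satisfies the hypotheses of Theorem~\ref{teo:BC} with $\dim E^c=1$, but $\Lambda$ is not uniformly hyperbolic. Since $E^c$ is one-dimensional this forces that $E^c$ is not uniformly contracted by $Df$ (it may also fail to be uniformly expanded, but only the first failure will be used).

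First I would pass to $S$. Theorem~\ref{teo:BC} applies because $\cW^s(x)\cap\Lambda=\{x\}$; it yields a locally invariant embedded submanifold $S\supset\Lambda$, of dimension $1+\dim E^u$, tangent to $E^c\oplus E^u$ along $\Lambda$. Replacing $f$ by its restriction to $S$ and $\Lambda$ by the maximal invariant set near it in $S$, one obtains a compact invariant set carrying a dominated splitting $E^c\oplus E^u$ with $E^u$ uniformly expanded, $\dim E^c=1$, and $E^c$ still not uniformly contracted. The hypothesis of Theorem~\ref{teo:BC} is exactly what is consumed by this step: the strong stable manifolds $\cW^s$ are transverse to $S$, hence carry no information once one works inside $S$. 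On the other hand the partial hyperbolicity of $\Lambda$ in $M$ is robust (Remark~\ref{Remark-PersistencePH}), so for every diffeomorphism $C^1$-close to $f$ the corresponding maximal invariant set near $\Lambda$ still has nontrivial bundles $E^s$ and $E^u$; in particular no sink and no source of the ambient diffeomorphism can accumulate on $\Lambda$.

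Next I would produce a weak periodic orbit inside $\Lambda$. Because $E^c$ is the bottom sub-bundle of the dominated splitting $E^c\oplus E^u$ on $S$ and is not uniformly contracted, there is an ergodic measure supported in $\Lambda$ with nonnegative center Lyapunov exponent (otherwise the supremum of the center exponent over invariant measures would be negative, which together with domination would force $E^c$ to be uniformly contracted). Combining Ma\~n\'e's ergodic closing lemma, the Bonatti--Crovisier connecting lemma, and the $C^1$ version of the Pujals--Sambarino theorem for one-dimensional extremal bundles (Liao, Bonatti--Gan--Wen, Crovisier; here the genericity of $f$ discards invariant circles tangent to $E^c$ carrying irrational rotations, and provides index completeness of homoclinic classes), one should obtain a periodic orbit $O(q)\subset\Lambda$ whose center exponent is $\geq 0$, i.e.\ of stable index $s=\dim E^s$ inside the robustly partially hyperbolic set $\Lambda$. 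Carrying this step out so that the periodic orbit remains inside $\Lambda$ is the part I expect to require the most care, and is the reason why passing to $S$ --- where $E^c$ becomes extremal --- is convenient.

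Finally I would invoke \cite{CPS}. The point there is that, for $C^1$-generic $f$, a periodic orbit with nonnegative center exponent cannot sit inside a partially hyperbolic set with one-dimensional center whose extremal bundles are robustly nontrivial: morally, such an orbit cannot be absorbed by the dominated structure without either generating a homoclinic tangency --- which is ruled out because the stable and unstable manifolds of $q$ are tangent to the dominated sub-bundles $E^s$ and $E^c\oplus E^u$ and hence meet transversally --- or being accumulated by sinks or sources --- which is ruled out by the robust nontriviality of $E^u$, respectively $E^s$. This contradicts the previous step, so $E^c$ is in fact uniformly contracted and $\Lambda$ is uniformly hyperbolic. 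The substance of this last step is entirely contained in \cite{CPS}, which is why the present paper only records the conclusion as a remark; reconstructing that argument is the main obstacle to a self-contained proof.
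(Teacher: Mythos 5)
The paper offers no argument for this remark: it is a pure citation to the unpublished work \cite{CPS}, and the authors explicitly decline to use the statement elsewhere. So your proposal can only be judged on its own merits, and it has a genuine gap in its second half. Your first move --- applying Theorem \ref{teo:BC} to pass to the locally invariant submanifold $S$, where $E^c$ becomes the extremal bundle of a dominated splitting $E^c\oplus E^u$ over $\Lambda$, and observing that failure of uniform contraction of $E^c$ yields an ergodic measure with nonnegative centre exponent --- is correct, and it is indeed the reduction that makes \cite{CPS} applicable. The trouble starts with the production of a periodic orbit \emph{inside} $\Lambda$: the ergodic closing lemma and its generic refinements produce periodic orbits approximating the measure in the weak-$*$ and Hausdorff senses, but these orbits need not lie in $\Lambda$, which is not assumed locally maximal and may well be aperiodic (the minimal $\cW^u$-saturated sets to which this remark is relevant in Section \ref{Section-Applications} typically carry no periodic points at all). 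You flag this as the delicate step but do not close it, and in this generality it cannot be closed.

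More seriously, the final step rests on a false principle. A hyperbolic periodic orbit of stable index $\dim E^s$ (positive centre exponent) can perfectly well sit inside a robustly partially hyperbolic set with one-dimensional centre and nontrivial extremal bundles, with no homoclinic tangency and no accumulation by sinks or sources: Ma\~n\'e's robustly transitive derived-from-Anosov diffeomorphisms of $\TT^3$ contain periodic orbits of both centre signs inside a single such set. Your own parenthetical observations in that paragraph in fact prove that neither tangencies nor sinks/sources are produced --- the domination is precisely what lets a weak or index-$s$ orbit be ``absorbed'' harmlessly --- so no contradiction arrives. The hypothesis $\cW^s(x)\cap\Lambda=\{x\}$, which you correctly consumed in passing to $S$, never reappears at the moment the contradiction is supposed to occur, yet without it the conclusion is simply false. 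What \cite{CPS} actually supplies is a Pujals--Sambarino-type theorem for one-dimensional \emph{extremal} bundles: for $C^1$-generic $f$, such a bundle of a dominated splitting over a compact invariant set is uniformly contracted unless the set contains non-hyperbolic periodic orbits or normally hyperbolic curves supporting irrational rotations, both of which are excluded by genericity. Applying that theorem to $E^c\oplus E^u$ over $\Lambda$ viewed inside $S$ is the intended route; it does not pass through closing lemmas or an index/tangency dichotomy.
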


For a point $x\in M$ we denote by $\cS(x)$ its \emph{stable set}
$$\cS(x):=\{y, d(f^n(x),f^n(y)\to 0 \text{ as } n\to +\infty\}.$$
We will use a further result which is a combination of Ruelle's inequality (see for example \cite[Chapter IV.10]{ManheLibro}) and an estimate of the size of stable manifolds for ergodic measures with large negative Lyapunov exponents. We refer the reader to  \cite[Chapter IV]{ManheLibro} for definitions of topological and metric entropy as well as Lyapunov exponents.

\begin{prop}\label{prop:largemanifold}
Let $f: M \to M$ be a $C^1$-diffeomorphism and $\Lambda \en M$ a compact $f$-invariant subset with a partially hyperbolic splitting $T_\Lambda M =E^s \oplus E^c \oplus E^u$ and $\dim E^c=1$. Consider a continuous cone-field $\cE^{cs}$ defined on a neighborhood of $\Lambda$
and whose restriction to $\Lambda$ is a continuous cone-field around $E^{s}\oplus E^c$.
Then, for every $h>0$ there exists $\eps>0$ such that if $K \en \Lambda$ verifies:
\begin{itemize}
\item $K$ is a compact $f$-invariant subset contained in a locally invariant submanifold $S \en M$ tangent to $E^c\oplus E^u$ at every point of $K$,
\item the topological entropy of $f|_K$ is larger than $h$,
\end{itemize}
\noindent then, there exists a point $x \in K$ such that its stable set $\cS(x)$ contains a $C^1$-disk of radius $\eps$ centered at $x$ and tangent to $\cE^{cs}$.
\end{prop}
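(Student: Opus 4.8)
The plan is to deduce the statement from a Lyapunov–exponent estimate on an ergodic measure of large entropy, followed by a stable–manifold construction whose size is made uniform by a Pliss–type selection of a good point. The crucial point, and the one which the previous (wrong) attempt missed, is that Ruelle's inequality must be applied to $f^{-1}$ \emph{restricted to $S$}: it is the vanishing of the $E^u$–directions of $f^{-1}$ that forces the center exponent to be negative, and it is the absence of $E^s$ inside $S$ that makes this estimate nontrivial.

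\textbf{Step 1: a measure with definitely negative center exponent.} Fix $h>0$ and let $K$ be as in the statement. By the variational principle there is an ergodic $f$-invariant probability measure $\mu$ supported on $K$ with $h_\mu(f)>h$. Since $K\subset S$, $T_zS=E^c\oplus E^u$ for every $z\in K$, and $\dim E^c=1$, the map $f$ restricts, in a neighbourhood of $K$, to a $C^1$-diffeomorphism of the $(1+u)$-dimensional manifold $S$; extending $S$ to a closed manifold away from a neighbourhood of $K$ (which $\mu$ does not see), one applies Ruelle's inequality to $f^{-1}|_S$. The Oseledets exponents of $\mu$ along $TS$ are the single center exponent $\lambda^c(\mu)$ together with the $u$ exponents along $E^u$, all of which are $\ge\lambda_0>0$ by partial hyperbolicity; hence for $f^{-1}|_S$ only the center direction can be expanding, and $h<h_\mu(f)=h_\mu(f^{-1})\le\max\{0,-\lambda^c(\mu)\}$. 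Therefore $\lambda^c(\mu)<-h$.

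\textbf{Step 2: a point with uniform forward contraction along $E^s\oplus E^c$.} As $E^c$ is one-dimensional and $Df$-invariant, the function $\varphi(z)=\log\|Df|_{E^c(z)}\|$ is continuous and bounded on $\Lambda$, satisfies $\log\|Df^n|_{E^c(z)}\|=S_n\varphi(z)$, and $\int\varphi\,d\mu=\lambda^c(\mu)<-h$. Applying the Pliss lemma to $\varphi$ along $\mu$-generic orbits, one finds (for $\mu$-a.e.\ $x$, infinitely many $n\ge0$, so in particular non-emptily) a point $y$ in the closed set
$$\Gamma:=\Big\{z\in K:\ \|Df^n|_{E^c(z)}\|\le e^{-hn/2}\ \text{ for all }n\ge1\Big\}.$$
Since $E^s$ is uniformly contracted and, by \eqref{eq:PH}, is dominated by $E^c$ with angles bounded away from zero on $\Lambda$, such a $y$ satisfies
$$\|Df^n|_{E^s(y)\oplus E^c(y)}\|\le C\,e^{-\lambda_1 n}\qquad(n\ge0),$$
with $C\ge1$ and $\lambda_1>0$ depending only on $h$ and on the partial-hyperbolicity data of $f$ and $\Lambda$.

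\textbf{Step 3: an $(s+1)$-dimensional stable disc of uniform radius.} The point $y$ has uniform forward contraction along the $Df$-invariant subbundle $E^s\oplus E^c$ over its forward orbit, at the definite rate $\lambda_1$, and the splitting $(E^s\oplus E^c)\oplus E^u$ is dominated. By the classical graph-transform stable manifold theorem in this $C^1$ dominated setting (as in \cite{HPS}, see also \cite[Chapter IV]{ManheLibro} and \cite{Crov-habilitation}), $y$ admits a $C^1$ disc of dimension $\dim(E^s\oplus E^c)=s+1$, centred at $y$, everywhere tangent to the fixed cone-field $\cE^{cs}$, contained in the forward-stable set $\cS(y)$, and of radius bounded below by some $\eps>0$ depending only on $C$, $\lambda_1$, the domination constants, $\sup_M\|Df^{\pm1}\|$ and the width of $\cE^{cs}$ — hence only on $h$ and the fixed data. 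Taking $x=y\in\Gamma\subset K$ concludes. The two delicate points are applying Ruelle's inequality on the merely locally invariant, a priori non-compact $S$ (handled by the extension in Step 1) and, more substantially, obtaining a radius $\eps$ uniform over all admissible $K$; the latter is exactly what the Pliss selection of $\Gamma$ buys, since the contraction rate $h/2$ along $E^c$ and the domination gap with $E^u$ are the same for every such $K$ and every $y\in\Gamma$, so the stable-disc size cannot degenerate as $K$ varies.
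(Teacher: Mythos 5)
Your argument is correct and follows essentially the same route as the paper's proof: the variational principle produces an ergodic measure of entropy $>h$, Ruelle's inequality applied to $f^{-1}$ restricted to $S$ forces the center exponent below $-h$, and a point with a uniformly large stable disc tangent to $\cE^{cs}$ is then extracted. The only difference is that the paper delegates this last step to \cite[Section 8]{ABC}, whereas you unpack it via the Pliss lemma and the graph transform --- which is precisely the content of that reference.
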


\begin{proof}[\sc Proof] By the variational principle (\cite[Chapter IV.8]{ManheLibro}) there must be an ergodic  measure $\mu$ with entropy larger than $h$. Using Ruelle's inequality (\cite[Theorem 10.2 (a)]{ManheLibro}) applied to the restriction of $f^{-1}$ to $S$ and the fact that $K \en S$ one has that the center Lyapunov exponent of $\mu$ has to be smaller than $-h$. It follows from \cite[Section 8]{ABC} that there is a point $x$ in the support of $\mu$ with a large stable manifold tangent to $\cE^{cs}$. This concludes.
\end{proof}

\subsection{Proof of Theorem \ref{Teo-FinitelyManyMinimal}}

We consider $f_0 \colon M \to M$ in
the dense G$_\delta$ subset $\cG\subset \Diff^1$ given by Theorem \ref{Theorem-MainGenerique}.
Let $K$ be a partially hyperbolic set of $f_0$ with $\dim(E^c)=1$.
Let $\cU$ and $U$ be neighborhoods of $f_0$ and $K$ such that
for any $f\in \cU$, the maximal invariant set in $U$
is partially hyperbolic with $\dim(E^c)=1$.
Moreover, as in Section~\ref{ss.cones}, there exist invariant continuous cone fields $\cE^{cs}, \cE^{cu},\cE^u$ defined on $U$
around continuous bundles that extend $E^{cs},E^{cu},E^u$.
From Proposition~\ref{Prop-LocalProductStructure}, up to reduce the neighborhood $\cU$ of $f_0$,
there exists a compact neighborhood $U_K\subset U$ of $K$ and $\gamma_0$
such that there exists a local product structure for any $f\in \cU$
between points at distance smaller than $\gamma_0$ in the maximal invariant set $\Lambda_f$ of $f$ in  $U_K$.

Let $B^{cu}$ denote the open unit ball in $\RR\times \RR^u$
and $D^u=B^{cu}\cap (\{0\}\times \RR^u)$.
Up to reduce $U_K$ and $\cU$, there exists a finite collection open sets $V_1,\dots, V_\ell$
covering $U_K$, and for any $f\in \cU$, $x\in \Lambda_f\cap V_i$,
there is a $C^1$ embedding $\Psi_{i,x,f}\colon B^{cu}\to M$ and $\gamma\in (0,\gamma_0)$  with the following properties.
\begin{itemize}
\item for each $i$, the map $\Psi_{i,x,f}$ varies continuously with $f$ and $x$ for the $C^1$-topology,
\item $\Psi_{i,x,f}(D^u)$ is contained in the local unstable manifold of $x$ for $f$,
\item the image $\Psi_{i,x,f}(B^{cu})$ is tangent to $\cE^{cu}$, has diameter smaller than $\gamma_0$
and contains a ball of internal radius larger than $\gamma$ (for the induced metric).
\end{itemize}
Let us briefly explain the existence of these maps.
One first fixes a finite number of charts $V_i$ where all the bundles $TM, E^s, E^c, E^u$ are trivial.
Moreover, each $V_i$ is identified with an open connected set of $\RR^d$
so that for each $x\in V_i\cap \Lambda_f$, one can extend $E^c_x$ as a constant bundle over $V_i$.
The stable manifold theorem (c.f. Theorem \ref{Teo-StableManifold}) provides for each chart $V_i$,
a family of local unstable plaques that
can be parametrized continuously by $D^u$.
Each map $\Psi_{i,x,f}$ is now defined on $D^u$ and can be extended to $B^{cu}$
by flowing along a unit vector field tangent to the constant bundle $E^c_x$.

Up to reduce $\gamma>0$, for any $f\in \cU$, $x\in V_i\cap \Lambda_f$ and $y\in \Lambda_f$
that is $\gamma$-close to $x$,
one can define the projection
$\Pi^{ss}_{i,x,f}(y):=\cW^s_{\gamma_0}(y)\cap \Psi_{i,x,f}(B^{cu})$ which exists by Proposition~\ref{Prop-LocalProductStructure}.

Let $\Delta$ be a constant larger than $\max_{x\in M} \{\|D_xf\|, \|D_xf^{-1}\|\}$ for every $f\in \cU$.
Let $r'>\Delta.r>r>0$ and $t>0$ be much smaller than $\gamma$.
Theorem \ref{Theorem-MainGenerique} and Lemma \ref{Lemma-RobustProperty}
imply that, after reducing $\cU$, there exists the following property holds for every $f\in \cU$
and every $f$-invariant partially hyperbolic $\cW^u$-saturated set $\Gamma\subset U_K$:

\begin{itemize}
\item[(*)] There exists $\delta>0$ such that if $x,y \in \Gamma$ satisfy $y \in \cW^s(x)$ and $d_s(x,y) \in (r, r')$,
then there exists $x'\in \cW^u_t(x)$ such that $$ d(\cW^u_\gamma(y),\cW^s_\gamma(x))>\delta.$$
\end{itemize}

We want to show that such a saturated set $\Gamma$ contains at most finitely many \emph{minimal $\cW^u$-saturated} sets
(i.e. which are minimal for the inclusion among compact, $f$-invariant and $\cW^u$-saturated non-empty sets).

The following easy property will be important:

\begin{lema}\label{l.disjstable}
Let $\Lambda$ and $\Lambda'$ be minimal $\cW^u$-saturated sets such that there exists $x\in \Lambda$ such that $\cS(x) \cap \Lambda' \neq \emptyset$. Then $\Lambda=\Lambda'$.
\end{lema}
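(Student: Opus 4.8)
The plan is to use minimality of both sets together with the fact that $\cW^u$-saturated sets are invariant to propagate the hypothesis. Suppose $x \in \Lambda$ and $z \in \cS(x) \cap \Lambda'$. First I would observe that since $z$ and $x$ share the same forward asymptotic behaviour, the forward orbit of $z$ approaches $\Lambda$: indeed $d(f^n(z), f^n(x)) \to 0$ and $f^n(x) \in \Lambda$, so every limit point of $(f^n(z))_{n\ge 0}$ lies in the compact set $\Lambda$. Since $z \in \Lambda'$ and $\Lambda'$ is compact and invariant, the omega-limit set $\omega(z)$ is a non-empty compact invariant subset of $\Lambda'$. It is also contained in $\Lambda$ by the previous remark, hence $\omega(z) \subset \Lambda \cap \Lambda'$.

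Next I would upgrade this to a $\cW^u$-saturated subset. The key point is that $\omega(z)$, while invariant and compact, need not a priori be $\cW^u$-saturated, so one passes to its $\cW^u$-saturation. Concretely, let $\Gamma_0$ be the closure of $\bigcup_{p\in \omega(z)} \cW^u(p)$. Using the standard fact (invoked just before Theorem~\ref{Teo-FinitelyManyMinimal} in the text) that the $\cW^u$-saturation of an invariant compact set is again compact, invariant and $\cW^u$-saturated, we get that $\Gamma_0$ is a non-empty compact $f$-invariant $\cW^u$-saturated set. Since unstable leaves through points of $\omega(z) \subset \Lambda$ stay in the $\cW^u$-saturated set $\Lambda$, and likewise for $\Lambda'$, we conclude $\Gamma_0 \subset \Lambda \cap \Lambda'$.

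Finally I would invoke minimality twice. As $\Lambda$ is minimal among compact $f$-invariant $\cW^u$-saturated sets and $\emptyset \ne \Gamma_0 \subset \Lambda$ is such a set, we get $\Gamma_0 = \Lambda$. Symmetrically $\Gamma_0 \subset \Lambda'$ forces $\Gamma_0 = \Lambda'$. Hence $\Lambda = \Lambda'$. The only delicate point — and the step I would be most careful about — is making sure the object one extracts from the asymptotic hypothesis is genuinely $\cW^u$-saturated and not merely invariant; once the saturation is taken, the minimality arguments are immediate, which is why the statement is labelled ``easy''.
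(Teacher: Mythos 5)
Your argument is correct, but it takes a different route from the paper's. The paper argues by contraposition: since the intersection of two $f$-invariant $\cW^u$-saturated sets is again invariant and $\cW^u$-saturated, two distinct minimal sets must be disjoint, hence at positive distance $d(\Lambda,\Lambda')>0$; as $f^n(z)\in\Lambda'$ and $f^n(x)\in\Lambda$ for all $n\geq 0$, the convergence $d(f^n(z),f^n(x))\to 0$ required by $z\in\cS(x)$ is impossible, so $\cS(x)\cap\Lambda'=\emptyset$ whenever $\Lambda\neq\Lambda'$. You instead argue directly, extracting $\omega(z)\subset\Lambda\cap\Lambda'$ and applying minimality twice; this works. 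Two remarks. First, your detour through the saturation $\Gamma_0$ of $\omega(z)$ is unnecessary: $\Lambda\cap\Lambda'$ is itself compact, $f$-invariant and $\cW^u$-saturated (being an intersection of saturated invariant sets), and it contains $\omega(z)\neq\emptyset$, so minimality of $\Lambda$ and of $\Lambda'$ applied to $\Lambda\cap\Lambda'$ concludes at once. This shortcut also eliminates the only delicate step in your write-up, namely that the closure of a union of unstable leaves is $\cW^u$-saturated: that fact is true (by coherence and continuity of the unstable lamination), but it is not what is stated just before Theorem~\ref{Teo-FinitelyManyMinimal}, which concerns the existence of minimal $\cW^u$-saturated subsets, so your citation does not quite cover it. Second, as for what each approach buys: the paper's disjointness-plus-positive-distance argument avoids $\omega$-limit sets and saturation closures altogether, while your direct argument makes explicit that the hypothesis produces a common non-empty invariant subset, which is perhaps more transparent, and in the simplified form above it is just as short.
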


\begin{proof}[\sc Proof] Since the intersection of $f$-invariant $\cW^u$-saturated sets is $f$-invariant and $\cW^u$-saturated, we get that two different minimal $\cW^u$-saturated sets are either disjoint or coincide. Therefore, if $\Lambda' \neq \Lambda$ it follows that $d(\Lambda, \Lambda') > 0$. Using $f$-invariance, one deduces that $\cS(x) \cap \Lambda' = \emptyset$ forall $x\in \Lambda$.
\end{proof}

We distinguish between two types of minimal subsets which by definition cover all possibilities:

\begin{itemize}
\item  A \emph{Lower dimensional minimal set} is a minimal
$\cW^u$-saturated set $\Lambda$ such that for every $x\in \Lambda$
one has that $\cW^s(x) \cap \Lambda= \{x\}$.

\item  A \emph{Minimal set with strong connection} is a minimal set
$\Lambda$ for which there exists points $x,y \in \Lambda$
satisfying $y \in \cW^s(x)$.
\end{itemize}

We first show:

\begin{prop}\label{p.bigsizebasin}
In $\Gamma$ there are at most finitely many minimal $\cW^u$ saturated sets with a strong connection.
\end{prop}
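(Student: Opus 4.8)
The plan is to show that for every minimal $\cW^u$-saturated set $\Lambda\subset\Gamma$ carrying a strong connection, the stable set $\cS(\Lambda)=\bigcup_{x\in\Lambda}\cS(x)$ contains a ball whose radius is bounded below by a constant $\rho_0>0$ depending only on $f$ and on $r,r',t,\gamma$ — more precisely on the constant $\delta$ appearing in property~$(*)$, which one may take uniform over all $\cW^u$-saturated $\Gamma\subset U_K$ by a compactness argument. Since the intersection of compact $f$-invariant $\cW^u$-saturated sets is again of this kind, two distinct minimal $\cW^u$-saturated sets are disjoint, and then their stable sets are pairwise disjoint as well (cf. Lemma~\ref{l.disjstable}): if $p\in\cS(\Lambda)\cap\cS(\Lambda')$, say $p\in\cS(x)\cap\cS(x')$ with $x\in\Lambda$, $x'\in\Lambda'$, then $\emptyset\neq\omega(p)=\omega(x)=\omega(x')\subset\Lambda\cap\Lambda'$, forcing $\Lambda=\Lambda'$ by minimality. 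Hence at most $\mathrm{vol}(M)/\min_{p\in M}\mathrm{vol}(B(p,\rho_0))$ such sets can lie in $\Gamma$, which gives the finiteness.

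To produce the ball, start from a strong connection $y_0\in\cW^s(x_0)$, $y_0\neq x_0$, with $x_0,y_0\in\Lambda$. Along the orbit, $d_s(f^nx_0,f^ny_0)\to 0$ as $n\to+\infty$ and increases geometrically as $n\to-\infty$, while in one step it is only multiplied by a factor lying in $[\Delta^{-1},\Delta]$; since $r'>\Delta r$, there is $n\in\ZZ$ with $x:=f^nx_0$, $y:=f^ny_0\in\Lambda$ and $d_s(x,y)\in(r,r')$. As $\Lambda$ is $\cW^u$-saturated, $\cW^u_t(x)\subset\Lambda$, so property~$(*)$ yields $x'\in\cW^u_t(x)\subset\Lambda$ with $d(\cW^s_\gamma(x'),\cW^u_\gamma(y))\geq\delta$. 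Now $\cW^u_\gamma(y)$ and the unstable disk $\cW^u_\gamma(x')$ both lie in $\Lambda$, hence in $\cS(\Lambda)$ together with their strong-stable saturations; moreover, letting $D^{cu}$ be a centre--unstable plaque through $y$ tangent to $\cE^{cu}$ and containing $\cW^u_\gamma(y)$ (such as the disk $\Psi_{i,y,f}(B^{cu})$ from the construction above), the point $z:=\cW^s_\gamma(x')\cap D^{cu}$ provided by the local product structure (Proposition~\ref{Prop-LocalProductStructure}) satisfies $\omega(z)=\omega(x')\subset\Lambda$, so $z\in\cS(\Lambda)$, while $d(z,\cW^u_\gamma(y))\geq\delta$.

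The main obstacle, and the heart of the argument, is to upgrade this $\delta$-separation — which a priori only separates the $u$-dimensional disk $\cW^u_\gamma(y)$ from the $s$-dimensional disk $\cW^s_\gamma(x')$ — into a genuinely \emph{open} subset of $\cS(\Lambda)$ of size comparable to $\delta$. This is where the assumption $\dim E^c=1$ is essential: the plaque $D^{cu}$ decomposes as (an unstable disk) times a centre interval $I$ of length comparable to $\delta$ (on the side where $z$ sits), and the strong-stable saturation $\bigcup_{w\in\cW^u_\gamma(\cdot)}\cW^s_\gamma(w)$ of an unstable disk through $y$ or through $x'$ is a \emph{codimension-one} disk which, in suitable coordinates, is $C^0$-close to a standard linear one. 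I would then invoke Lemma~\ref{l.top}: every short centre curve crossing $I$ must meet one of these two codimension-one sheets, and such an intersection point lies in $\cS(\Lambda)$; combining this with the invariance of $\cS(\Lambda)$ and the uniform transversality of $\cW^s$ to $E^c$, one obtains that a fixed proportion of $D^{cu}$, and then — saturating by strong-stable manifolds — a ball of radius $\rho_0$ comparable to $\delta$, is contained in $\cS(\Lambda)$. An equivalent route I would pursue in parallel is to prove directly that $\Lambda$ is \emph{thick}, i.e. that $\Lambda\cap D^{cu}$ contains an unstable disk times a centre interval; the same combination of Lemma~\ref{l.top}, the local product structure and the one-dimensionality of $E^c$ should yield this, after which one saturates by $\cW^s$ and the counting in the first paragraph concludes the proof.
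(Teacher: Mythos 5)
Your reduction steps are fine (iterating the strong connection into the window $[r,r']$, applying property $(*)$, disjointness of stable sets of distinct minimal sets), but the heart of your argument has a genuine gap: the claim that the $\delta$-separation can be upgraded to an open ball of radius comparable to $\delta$ inside $\cS(\Lambda)$ (or, in your alternative route, that $\Lambda\cap D^{cu}$ is ``thick'', containing an unstable disk times a centre interval). Property $(*)$ is a separation statement, not a thickness statement: it is perfectly compatible with $\Lambda\cap D^{cu}$ consisting of just two unstable disks at centre distance about $\delta$ with nothing in between. In that situation the strong-stable saturations you consider are two sheets of codimension one (the codimension equals $\dim E^c=1$), and the mechanism you invoke -- every short centre curve crossing $I$ must hit one of the two sheets -- only reproduces points lying on those sheets, whose union has empty interior; nothing in the argument fills in the region between them, so you do not get ``a fixed proportion of $D^{cu}$'' in $\cS(\Lambda)$, and the volume-counting bound $\mathrm{vol}(M)/\min_p\mathrm{vol}(B(p,\rho_0))$ has no foundation. (Note that even in the easier ``lower dimensional'' case the paper only produces stable \emph{disks} tangent to $\cE^{cs}$, never open balls, and finiteness there is obtained by intersecting such a disk with the unstable plaques of an accumulating minimal set, not by a volume count.)

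The paper's proof of this proposition is structured differently and avoids proving any largeness of a single $\cS(\Lambda)$: it argues by contradiction with an accumulating sequence $\Lambda_n\to\Lambda$ of minimal sets with strong connections. Property $(*)$ applied to the limit connection produces the displaced point $w=\cW^s_\gamma(z)\cap\cD^{cu}(x)$ at distance $\geq\delta$ from $\cD^u(x)$; since $\dim E^c=1$, the disk $\cD^u(x)$ separates the plaque, and one builds an arc $\hat\beta$ in $\cD^{cu}(x)$ joining the two sides through the projection of an unstable arc of $\Lambda$. Lemma~\ref{l.top} then forces the projected unstable disk $\cD^u_n$ of $\Lambda_n$ (which is $C^0$-close to $\cD^u(x)$) to intersect $\hat\beta$, and in either case this yields an intersection of the stable sets of $\Lambda_n$ and $\Lambda$, contradicting Lemma~\ref{l.disjstable}. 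In other words, the non-joint-integrability is used as a topological hook to force two \emph{different} nearby minimal sets to have intersecting stable sets, rather than to make one minimal set's stable set open; to repair your proposal you would need to replace the ball/volume count by such an accumulation-plus-intersection argument.
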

\begin{proof}[\sc Proof] Assume by contradiction that there are infinitely many disjoint minimal $\cW^u$-saturated sets $\Lambda_n \en \Gamma$
admitting a strong connection.

As the sets $\Lambda_n$ are invariant, we can consider by iterating the connection, that inside $\Lambda_n$ there are points $x_n,y_n$ such that $y_n \in \cW^s(x_n)$ and $d_s(x_n,y_n) \in [r, \Delta r]$. Also, by (*), there is a point $z_n \in \cW^u_{t}(y_n)$ such that $d(\cW^s_\gamma(z_n), \cW^u_\gamma(x_n)) >\delta$.

Choosing a subsequence if necessary, we can assume that $\Lambda_n \to \Lambda \en \Gamma$ in the Hausdorff topology, that $x_n \to x \in \Lambda$, that $y_n \to y \in \Lambda$ and $z_n \to z \in \cW^u_t(y) \en \Lambda$. The set $\Lambda$ is also $\cW^u$-saturated (maybe not minimal) and it also has a strong connection since $y \in \cW^s(x)$ and $d_s(x,y) \in [r,\Delta r]$. These limit properties follow from the continuous variation of strong manifolds (Theorem \ref{Teo-StableManifold}).

Consider $V_i$ containing $x$ (and the $x_n$ for $n$ large),
the center-unstable disk $\cD^{cu}(x):=\Psi_{i,x,f}(B^{cu})$
and the unstable disc $\cD^{u}(x):=\Psi_{i,x,f}(D^{u})$ around $x$.
The unique point $\{w\}= \cW^s_\gamma(z) \cap \cD^{cu}(x)$ verifies that $d(w, \cD^u(x))\geq \delta$, as limit of the property (*).

Note that $\cD^{cu}(x)\setminus \cD^u(x)$ has two connected components $B^+,B^-$.
We can assume without loss of generality that $w \in B^-$.
Let $w_n$ be the points $\cW^s_\gamma(z_n) \cap \cD^{cu}(x)$. By continuity, for $n$ large enough, the points $w_n$ belong to $B^-$ as $w_n \to w$.

Let us consider an arc connecting $z$ to $y$ in $\cW^u_t(y)$.
Projecting by $\Pi^{ss}_{i,x,f}$, one gets an arc $\hat \beta_0$ in $\cD^{cu}(x)$ joining $w$ to $x$.
Since $t,r'$ have been chosen small compared to $\gamma$, this arc lifted by $\Psi_{i,x,f}$ has diameter smaller than $1/8$.
It can be concatenated with two closed arcs $\hat \beta_{-1},\hat \beta_1$ to produce an extended arc $\hat \beta$
in $\cD^{cu}(x)$ such that
\begin{itemize}
\item the lift $\Psi_{i,x,f}^{-1}(\hat \beta)$ is $1/8$ close to $\beta:=[-1/2,1/2]\times \{0\}^u$,
\item $\hat \beta_{-1}$ does not intersect $\cD^u(x)$,
\item $\hat \beta_1$ is contained in $B^+\cup \cD^u(x)$.
\end{itemize}

\begin{figure}[ht]
\vspace{-0.5cm}
\begin{center}
\includegraphics[scale=0.45]{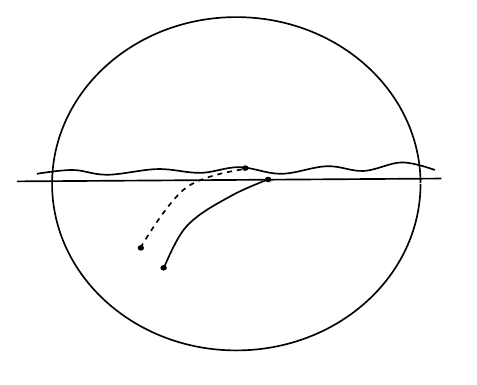}
\begin{picture}(0,0)
\put(-35,15){$\cD^{cu}(x)$}
\put(-75,35){$B^-$}
\put(-130,144){$B^+$}
\put(-79,76){$x$}
\put(-159,90){$\cD^{u}_n$}
\put(-95,55){$\hat \beta_0$}
\put(-72,125){$\hat \beta_{1}$}
\put(-95,20){$\hat \beta_{-1}$}
\put(-100,40){$w$}
\put(-120,37){$w_n$}
\put(-5,75){$\cD^u(x)$}
\end{picture}
\end{center}
\vspace{-0.5cm}
\caption{The stable manifolds of the minimal sets must intersect.\label{fig:intersection}}
\end{figure} 

Let $D^u_{1/2}$ denote the one-codimensional disc in $B^{cu}$ which is the
intersection of $D^u$ with the ball of radius $1/2$.
For each $n$, one considers the disc $\Psi_{i,x_n,f}(D^u_{1/2})$ and its projection $\cD^u_n$ by
$\Pi^{ss}_{i,x,f}$ on $\cD^{cu}(x)$.
As $n$ goes to $+\infty$, the lifts $\Psi^{-1}_{i,x,f}(\cD^u_n)$ converge in the $C^0$-topology to the inclusion of $D^u_{1/2}\subset B^{cu}$.
By Lemma \ref{l.top} for $n$ large enough, the disc $\cD^u_n$ intersects the arc $\hat \beta$.

The arc $\hat \beta_{-1}$ does not intersect $\cD^u$, and thus does not intersect $\cD^u_n$, for $n$ large enough.
There are two cases.
\begin{itemize}
\item If $\cD^u_n$ intersects $\hat \beta_0$, this implies by construction that the stable set of $\Lambda_n$
intersects the stable set of $\Lambda$.
\item If $\cD^u_n$ intersects $\hat \beta_{1}$, then there is a point $\zeta_n$ of $\cD^u_n$ in $B^+\cup \cD^u$.
There exists a connected set in the projection by $\Pi^{ss}_{i,x,f}$
of $\cW^u_t(y_n)$ joining $w_n$ to $\cD^u_n$.
Hence the projection of $\Lambda_n$ by $\Pi^{ss}_{i,x,f}$ contains a connected set in $\cD^{cu}(x)$
joining $w_n$ to $\zeta_n$. Since $w_n\in B^-$, this implies that the stable set of $\Lambda_n$
intersects the stable set of $\Lambda$.
\end{itemize}
Both cases give a contradiction with Lemma \ref{l.disjstable}.
\end{proof}

\begin{prop}\label{p.bigstableforsurface}
There exists $\eps>0$ such that if $\Lambda$ is a lower dimensional minimal $\cW^u$-saturated set of $\Gamma$, then $\Lambda$ contains
a point $x$
whose stable set $\cS(x)$ contains a $C^1$-disk of radius $\eps$ centered at $x$ and tangent to $\cE^{cs}$.
\end{prop}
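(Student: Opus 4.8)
The plan is to reduce the statement to a \emph{uniform} positive lower bound for the topological entropy of $f$ on $\Lambda$, and then to invoke Proposition~\ref{prop:largemanifold}. By definition a lower dimensional minimal $\cW^u$-saturated set $\Lambda\en\Gamma$ satisfies $\cW^s(x)\cap\Lambda=\{x\}$ for every $x\in\Lambda$, so Theorem~\ref{teo:BC} applies and yields a locally invariant embedded submanifold $S\en M$ with $\Lambda\en S$, tangent to $E^c\oplus E^u$ at every point of $\Lambda$. I would then apply Proposition~\ref{prop:largemanifold} taking as ambient partially hyperbolic set the (fixed) maximal invariant set $\Lambda_f$ of $f$ in $U_K$ and with $K=\Lambda$: the set $\Lambda$ is compact, $f$-invariant, and contained in the locally invariant submanifold $S$ which is tangent to $E^c\oplus E^u$ along $\Lambda$, so all its hypotheses hold except possibly the one on entropy. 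With a fixed threshold $h_0>0$ available, Proposition~\ref{prop:largemanifold} then produces a radius $\eps=\eps(h_0)>0$, depending only on $h_0$ and on the fixed data ($f$, $U_K$, the cone-field $\cE^{cs}$), such that $\Lambda$ contains a point whose stable set has a $C^1$-disc of radius $\eps$ tangent to $\cE^{cs}$ — as soon as one knows $h_{\mathrm{top}}(f|_\Lambda)>h_0$. Uniformity of $\eps$ over $f\in\cU$ follows, after shrinking $\cU$, from the local uniformity in the $C^1$ topology of the two ingredients of Proposition~\ref{prop:largemanifold} (Ruelle's inequality and the stable-size estimates of \cite{ABC}).

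It remains to establish the uniform entropy bound, and for this only the $\cW^u$-saturation of $\Lambda$ is used. For $x\in\Lambda$ the local unstable disc $\cW^u_t(x)$ lies in $\Lambda$, hence so do all its forward iterates $D_n:=f^n(\cW^u_t(x))$, which are \emph{embedded} $u$-dimensional discs tangent to the thin cone-field $\cE^u$. Since the metric of Section~\ref{ss.cones} was chosen so that $Df$ expands every vector of $\cE^u$, it also expands the $u$-volume of every plane contained in $\cE^u$, by a factor $\ge\nu>1$ depending only on the partial hyperbolicity constants; therefore $\mathrm{vol}_u(D_n)\ge c_0\,\nu^n$. Because the discs $D_n$ are embedded, almost tangent to the horizontal cone, and confined to the fixed compact region $U_K$, they cannot fold onto themselves, so this exponential growth of unstable volume \emph{inside} $\Lambda$ forces $f|_\Lambda$ to separate orbits exponentially fast; a Bowen-ball count — equivalently, the extraction of a topological horseshoe — carried out inside $\Lambda$ then gives $h_{\mathrm{top}}(f|_\Lambda)\ge h_0>0$ with $h_0$ depending only on $\nu$ and on $U_K$, hence uniform over $\Lambda$ and over $f\in\cU$. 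Feeding this into the first paragraph would complete the proof.

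The main obstacle is precisely this uniform entropy estimate. That a $\cW^u$-saturated compact invariant set has positive topological entropy is classical, but here the bound must be independent of $\Lambda$, and in the $C^1$ category — where Yomdin-type inequalities are not available — the passage from unstable-volume growth to topological entropy has to be performed directly inside the laminated set, controlling the scale and the return time of the extracted horseshoe solely through the partial hyperbolicity constants and the fixed neighbourhood $U_K$. Once this is in place, everything else is routine bookkeeping of constants in $\cU$ (and one may note, as a sanity check, that in the degenerate case where $Df|_{E^c}$ happens to be uniformly contracted on $\Lambda$ the set $\Lambda$ is hyperbolic and the conclusion is the classical stable manifold theorem, tangent to $E^s\oplus E^c\subset\cE^{cs}$).
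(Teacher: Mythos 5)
Your first paragraph follows the paper exactly: the reduction via Theorem~\ref{teo:BC} and Proposition~\ref{prop:largemanifold} to a \emph{uniform} lower bound on $h_{\mathrm{top}}(f|_\Lambda)$ is precisely how the proof begins, and once such a bound $h_0$ is available the radius $\eps$ indeed comes out of Proposition~\ref{prop:largemanifold} independently of $\Lambda$. The problem is that you then stop at the decisive step: the uniform entropy bound is asserted, declared to be ``the main obstacle'', and not proved. An argument that openly leaves its key estimate open is not a proof, so there is a genuine gap here. Moreover, the route you sketch for closing it --- exponential growth of $u$-volume of the discs $f^n(\cW^u_t(x))$, followed by a passage from volume growth to entropy --- is exactly the passage you yourself note is delicate in the $C^1$ category, and it is not the mechanism the paper uses.

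The missing idea is much more elementary and uses only uniform expansion along $\cE^u$ together with the $\cW^u$-saturation of $\Lambda$, with all constants attached to $\Gamma$ rather than to $\Lambda$. Fix $\gamma\in(0,\gamma_0)$ small and $\eta\ll\gamma$, and fix a finite covering $\cC$ of $\Gamma$ by balls of radius $\eta$. By uniform expansion there is $k_0$ (depending only on the cone fields and the metric, not on $\Lambda$) such that $f^{k_0}$ of any disc tangent to $\cE^u$, centered at a point of $\Gamma$ and of radius $\eta$, contains a disc of radius $\gamma$. Since $\Lambda$ is $\cW^u$-saturated, every such disc centered in $\Lambda$ is contained in $\Lambda$, and any $u$-disc in $\Lambda$ of diameter $\gamma$ contains two subdiscs of radius $\eta$ lying in different balls of $\cC$ (hence uniformly separated, as $\eta\ll\gamma$). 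Iterating this dichotomy, every $k_0$ iterates the number of $\eta$-separated orbit segments inside $\Lambda$ doubles, giving $h_{\mathrm{top}}(f|_\Lambda)\geq\frac{1}{k_0}\log 2$, a bound manifestly independent of $\Lambda$ (this is the doubling argument of the paper, a coarse version of Newhouse's entropy--volume estimate \cite{Newhouse-Volume}). No Yomdin-type inequality, no conversion of volume growth into entropy, and no horseshoe extraction is needed; supplying this argument is what your proposal lacks.
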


\begin{proof}[\sc Proof] Using Theorem \ref{teo:BC} and Proposition \ref{prop:largemanifold} it is enough to show that there is a uniform lower bound on the topological entropy for each such minimal $\cW^u$-saturated set.

This follows from the following argument\footnote{A similar but sharper argument can be found in \cite{Newhouse-Volume}.}:
let $\gamma\in (0,\gamma_0)$ be a small constant and let $\eta>0$ much smaller than $\gamma$.
Consider a finite covering $\cC$ of $\Gamma$ by balls of radius $\eta$. There exists $k_0$
such that the image by $f^{k_0}$ of any disc tangent to $\cE^{u}$ centered at a point of $\Gamma$ and of radius $\eta$
contains a disc of radius $\gamma$.

Let $\Lambda$ be a minimal $\cW^u$-saturated set of $\Gamma$.
Any disk $D$ contained in $\Lambda$, tangent to a $E^u$ and of diameter $\gamma$
contains at least two disks of radius $\eta$ in different balls of the covering $\cC$.
Therefore, inside $D$ one has that in $k_0$ iterates we duplicate the number of $\eta$-separated orbits and therefore the entropy of $f$ in $\Lambda$ is larger than $\frac{1}{k_0}\log 2$ (independent of $\Lambda$).
\end{proof}

We now conclude the proof of Theorem \ref{Teo-FinitelyManyMinimal} by contradiction.
Let us assume that there exists an infinite sequence of minimal $\cW^u$-saturated set $(\Lambda_n)$in $\Gamma$.
From Proposition~\ref{p.bigsizebasin}, we can assume that they are lower dimensional.
From Proposition~\ref{Prop-LocalProductStructure} and Proposition \ref{p.bigstableforsurface},
there exists $n\neq m$ such that $\Lambda_n$ intersects the stable set $\cS(x_m)$ of some point $x_m\in \Lambda_m$.
This contradicts Lemma \ref{l.disjstable}.


\bigskip

\small
\noindent
\emph{Sylvain Crovisier}\\
{CNRS - Laboratoire de Math\'ematiques d'Orsay (UMR 8628),\\
Universit\'e Paris-Sud, 91405 Orsay, France.}\\
\texttt{http://www.math.u-psud.fr/$\sim$crovisie}
\bigskip

\noindent
\emph{Rafael Potrie}\\
{CMAT, Facultad de Ciencias, Universidad de la Rep\'ublica, Uruguay}.\\
\texttt{www.cmat.edu.uy/$\sim$rpotrie}
\texttt{rpotrie@cmat.edu.uy}
\bigskip

\noindent
\emph{Mart\'in Sambarino}\\
{CMAT, Facultad de Ciencias, Universidad de la Rep\'ublica, Uruguay}.\\
\texttt{samba@cmat.edu.uy}


\begin{thebibliography}{2}

\bibitem[ABC]{ABC} F. Abdenur, C. Bonatti, S. Crovisier,  Non-uniform hyperbolicity for $C^1$-generic diffeomorphisms, \emph{Israel J. of Math.} {\bf 183} (2011), 1--60.

\bibitem[ACP]{ACP} F. Abdenur, S. Crovisier, R. Potrie, Characterization of robustly transitive partially hyperbolic diffeomorphisms, \emph{In preparation.}

\bibitem[B]{Bonatti-Survey} C. Bonatti, Towards a global view of dynamical systems for the $C^1$-topology. \emph{Ergodic Theory and Dynam. Systems} {\bf 31} (2011), 959--993.

\bibitem[BC$_1$]{BC-rec} C. Bonatti, S. Crovisier, R\'ecurrence et g\'en\'ericit\'e, \emph{Inventiones Math.} {\bf 158} (2004), 33--104.

\bibitem[BC$_2$]{BC-Center} C. Bonatti, S. Crovisier, Center manifolds for partially hyperbolic set without strong unstable connections, \emph{Journal of the IMJ} {\bf15} (2016), no. 4, 785--828.

\bibitem[BD]{BD} C. Bonatti, L. D\'iaz, Connexions h\'et\'eroclines et g\'en\'ericit\'e d'une infinit\'e de puits et de sources. \emph{ Ann. Sci. \'Ecole Norm. Sup. (4)} {\bf 32} (1999), no. 1, 135--150.

\bibitem[BG]{BG} C. Bonatti, N. Guelman, Transitive Anosov flows and Axiom-A diffeomorphisms. \emph{Ergodic Theory Dynam. Systems} {\bf 29} (2009), 817--848.

\bibitem[BDV]{BDV} C. Bonatti, L. D\'iaz and M. Viana, \emph{Dynamics Beyond Uniform Hyperbolicity. A global geometric and probabilistic perspective}, Encyclopaedia of Mathematical Sciences {\bf 102}. Mathematical Physics III. Springer-Verlag (2005).

\bibitem[BGLY]{BGLY} C. Bonatti, S.Gan, M. Li and D.Yang. \emph{In preparation.}

\bibitem[C$_1$]{Crov-habilitation} S. Crovisier, Perturbation de la dynamique de diff\'eomorphismes en topologie $C^1$.
\emph{Ast\'erisque} \textbf{354} (2013).

\bibitem[C$_2$]{Crov-ICM} S. Crovisier,  Dynamics of $C^1$-diffeomorphisms: global description and prospects for classification, \emph{Proceedings of the International Congress of Mathematicians.} Volume III, 571-595, Kyung Moon, Seoul, 2014.

\bibitem[CPo]{CroPot} S. Crovisier, R. Potrie, Introduction to partially hyperbolic dynamics,  \emph{Lecture notes for School on Dynamical Systems}, ICTP, Trieste, 2015.

\bibitem[CPu]{CP} S. Crovisier, E. Pujals, Essential hyperbolicity and homoclinic bifurcations: a dichotomy phenomenon/mechanism for diffeomorphisms, \emph{Inventiones Mathematicae} {\bf 201} (2015), 385-517.

\bibitem[CPS]{CPS} S. Crovisier, E. Pujals, M. Sambarino, Hyperbolicity of the extreme bundles. \emph{In preparation}.

\bibitem[CSY]{CSY} S. Crovisier, M. Sambarino, D. Yang, Partial hyperbolicity and homoclinic tangencies.\emph{Journal of the EMS}  \textbf{17} (2015), 1--49.

\bibitem[DW]{DW} D. Dolgopyat, A. Wilkinson, Stable accesibility is $C^1$-dense, \emph{Asterisque} {\bf 287} (2003), 33-60.

\bibitem[G]{Gourmelon-Addapted} N. Gourmelon, Adapted metrics for dominated splittings, \emph{Ergodic Theory and Dynamical Systems}  {\bf 27} (2007), pp 1839-1849.

\bibitem[GP]{guillemin-pollack} V. Guillemin; A. Pollack, \emph{ Differential topology}. Prentice-Hall, Inc., Englewood Cliffs, N.J., 1974. {\rm xvi}+222 pp.

\bibitem[HP]{HP} A. Hammerlindl, R. Potrie, Pointwise partial hyperbolicity in three-dimensional nilmanifolds. \emph{J. Lond. Math. Soc. }(2) {\bf 89} (2014), 853--875.

\bibitem[HPS]{HPS} M. Hirsch, C. Pugh and M. Shub, Invariant Manifolds,  \emph{Springer Lecture Notes in Math.}, {\bf 583} (1977).

\bibitem[M]{ManheLibro} R. Ma\~n\'e, \emph{Ergodic theory and differentiable dynamics}, Springer-Verlag (1983).

\bibitem[N$_1$]{newhouse1} S. Newhouse, Nondensity of axiom ${\rm A}({\rm a})$ on $S\sp{2}$. 1970 Global Analysis (Proc. Sympos. Pure Math., Vol. XIV, Berkeley, Calif., 1968) pp. 191--202 Amer. Math. Soc., Providence, R.I.

\bibitem[N$_2$]{Newhouse-Volume} S. Newhouse, Entropy and volume,  \emph{Ergodic Theory and Dynamical Systems}  {\bf 8} (1988), pp 283--299.

\bibitem[Pa]{palis-2005} J. Palis, A global perspective for non-conservative dynamics. \emph{Ann. Inst. H. Poincar\'e Anal. Non Lin\'eaire} {\bf 22} (2005), no. 4, 485--507.

\bibitem[Pot]{Pot2} R. Potrie, A few remarks on partially hyperbolic diffeomorphisms of $\Bbb{T}^3$ isotopic to Anosov. \emph{J. Dynam. Differential Equations} {\bf 26} (2014), no. 3, 805--815.

\bibitem[Pot$_2$]{Pot1} R. Potrie, Partially hyperbolic diffeomorphisms with a trapping property. \emph{Discrete Contin. Dyn. Syst.} {\bf 35} (2015), 5037--5054.

\bibitem[PuSh]{pughshub} C. Pugh, M. Shub, Stable ergodicity and julienne quasi-conformality. \emph{J. Eur. Math. Soc.} \textbf{2} (2000), 1--52.

\bibitem[PuSa]{Pujals-Sambarino} E. Pujals, M. Sambarino, Homoclinic tangencies and hyperbolicity for surface diffeomorphisms. \emph{Ann. of Math.} (2) {\bf 151} (2000), no. 3, 961--1023.

\bibitem[Pu$_1$]{Pujals} E. Pujals, On the density of hyperbolicity and homoclinic bifurcations for 3D-diffeomorphisms in attracting regions. \emph{Discrete Contin. Dyn. Syst.} {\bf16} (2006), no. 1, 179--226.

\bibitem[Pu$_2$]{Pujals2} E. Pujals, Density of hyperbolicity and homoclinic bifurcations for attracting topologically hyperbolic sets. \emph{Discrete Contin. Dyn. Syst. A} {\bf 20} (2008), no. 2, 335--405.

\end{thebibliography}
\end{document}